\pgfplotsset{compat=1.14} 
\theoremstyle{plain}
\numberwithin{equation}{section}
\newtheorem{theorem}{Theorem}[section]
\newtheorem{definition}[theorem]{Definition}
\newtheorem{proposition}[theorem]{Proposition}
\newtheorem{lemma}[theorem]{Lemma}
\newtheorem{fact}[theorem]{Fact}
\newtheorem{corollary}[theorem]{Corollary}
\newtheorem{remark}[theorem]{Remark}
\newcommand{\N}{\mathbb{N}}
\newcommand{\Z}{\mathbb{Z}}
\newcommand{\R}{\mathbb{R}}
\DeclareFontFamily{U}{mathx}{}
\DeclareFontShape{U}{mathx}{m}{n}{<-> mathx10}{}
\DeclareSymbolFont{mathx}{U}{mathx}{m}{n}
\DeclareMathAccent{\widehat}{0}{mathx}{"70}
\DeclareMathAccent{\widecheck}{0}{mathx}{"71}
\renewcommand{\check}[1]{\widecheck{#1}}
\numberwithin{equation}{section}
\DeclareMathOperator{\E}{\mathbb{E}}
\renewcommand{\P}{\mathbb{P}}
\newcommand{\dd}{\mathrm{d}}
\renewcommand{\bar}[1]{\overline{#1}}
\newcommand{\eqdist}{\stackrel{\rm (d)}{=}}
\renewcommand{\tilde}[1]{\widetilde{#1}}
\renewcommand{\rho}{\varrho}
\renewcommand{\epsilon}{\varepsilon}
\title{The Derrida-Retaux model on a geometric Galton--Watson tree}
\author{Gerold Alsmeyer}
\address{Gerold Alsmeyer, Institut für Mathematische Stochastik,
Universität Münster, D-48149 Münster,
Germany }
\email{gerolda@math.uni-muenster.de}
\author{Yueyun Hu}
\address{Yueyun Hu,
LAGA, Universit\'e Paris XIII,  93430 Villetaneuse,
France}
\email{yueyun@math.univ-paris13.fr}
\author{Bastien Mallein}
\address{Bastien Mallein,
Institut de Math\'ematiques de Toulouse, UMR 5219,
Universit\'e de Toulouse,
UPS, F-31062 Toulouse Cedex 9, France}
\email{bastien.mallein@math.univ-toulouse.fr}
\date{\today}
\begin{document}

\begin{abstract}
We consider a generalized Derrida-Retaux model on a Galton-Watson tree with a geometric offspring distribution. For a class of recursive systems, including the Derrida-Retaux model with either a geometric or exponential initial distribution, we characterize the critical curve using an involution-type equation and prove that the free energy satisfies the Derrida-Retaux conjecture.
\end{abstract}

\subjclass[2020]{60J80, 82B27}

\keywords{Recursive system, Derrida-Retaux conjecture, critical curve.}

\maketitle

\section{Introduction}

The Derrida-Retaux model, henceforth referred to as the DR model, is a max-type recursive equation in distribution introduced by the physicists Derrida and Retaux \cite{derrida-retaux} as a toy model for studying the depinning transition in the limit of strong disorder of the  Poland--Scheraga model (introduced in \cite{PS1,PS2}). After a simple change of variables, the model can be described as a family of recursively defined probability distributions $(\nu_{n}, n\geq 1)$ on $\R_+$. Specifically, for all $n \in \Z_+$, we consider two independent random variables $Y_{n}^{(1)}$ and $Y_{n}^{(2)}$ with law $\nu_{n}$. Then the random variable $Y_{n+1}$ is defined as
\begin{equation}\label{iteration:m=2}
Y_{n+1}\ =\ \left(Y_{n}^{(1)}+Y_{n}^{(2)}-1\right)_{+},
\end{equation}
where $x_{+} := \max\{ x, \, 0\}$ denotes the positive part of $x\in\R$, and we denote by $\nu_{n+1}$ its distribution. This model was previously studied by Collet, Eckmann, Glaser, and Martin \cite{collet-eckmann-glaser-martin} for probability distributions on $\Z_+$, where it served as a toy model for the study of spin glasses.

One of the most notable features of the DR model is that its free energy follows an infinite-order Berezinskii-Kosterlitz-Thouless (BKT) phase transition. This phenomenon, observed in disordered systems, is characterized by the absence of a stable distributional fixed point under renormalization. This connection has attracted significant interest, as it provides insight into the critical thresholds of hierarchical pinning models.

The free energy of the model $(\nu_{n})_{n\ge 0}$, starting from an initial distribution $\nu_{0}=\nu$, is defined as
\begin{equation}
F_Y(\nu)\,:=\, \lim_{n\to\infty} 2^{-n} \int x \nu_{n}(\dd x)\ =\ \lim_{n\to\infty} 2^{-n} \E(Y_{n})\ \in\ [0, \infty),
\end{equation}
where $Y_{n}$ is a random variable with distribution $\nu_{n}$. Note that the existence of the above limit follows directly from the fact that $\E(Y_{n+1})\le  2 \E(Y_{n})$, as implied by equation \eqref{iteration:m=2}. The model is said to be \emph{unpinned} if $F_Y(\nu) =0$, and \emph{pinned} if $F_Y(\nu)>0$, in reference to the Poland--Scheraga model.

We study the phase transition of this system from the pinned to the unpinned regime, considering initial conditions of the form $(1-p)\delta_0 + p\nu$ as $p$ increases from $0$ to $1$.
The critical point of the system is defined as
$$ p_c\,:=\, \inf\{p>0: F_Y((1-p) \delta_{0}+p \nu) >0\}. $$
Collet et al. \cite{collet-eckmann-glaser-martin}give the value of the critical parameter $p_c$ if $\nu$ is $\Z_+$-valued. In particular, they show that, for $\nu=\delta_2$, the critical point is
$$ p_c\,=\,\frac{1}{5}. $$
No such formula is known when $\nu$ is not integer-valued -- for example for $\nu = \frac{1}{2}(\delta_{1/2} + \delta_2)$. For further details and references on the recursive equation \eqref{iteration:m=2}, we refer to \cite{derrida-retaux} and \cite{bz_DRsurvey}.

While the definition of the model $\{Y_{n}\}$ is relatively simple, it exhibits intricate behavior at criticality, leading to a BKT phase transition and making its rigorous analysis challenging, with many fundamental questions remaining unresolved. It is widely believed that for a broad class of recursive models, including those described by \eqref{iteration:m=2}, the hierarchical renormalization model, and the pinning model (see \cite{derrida-hakim-vannimenus}, \cite{giacomin-lacoin-toninelli}, and \cite{berger-giacomin-lacoin}), the transition at the critical point is of infinite order. Specifically, for the model given by equation \eqref{iteration:m=2}, Derrida and Retaux \cite{derrida-retaux} conjectured the existence of a constant $C>0$ such that
\begin{equation}
 \label{conjectureDR}
  F_{Y}((1-p) \delta_{0}+p \nu)\ =\ \exp \left(-(C+o(1))  (p-p_c)^{-1/2}\right),\quad\text{as }p\downarrow p_c.
\end{equation}

Naturally, instead of the sum $Y_{n}^{(1)}+Y_{n}^{(2)}$ in \eqref{iteration:m=2}, we may generalize the model by considering the sum $Y^{(1)}_{n}+...+Y^{(m)}_{n}$ for any integer $m\ge 2$, where $Y^{(1)}_{n}, ..., Y^{(m)}_{n}$ are independent copies of $Y_{n}$. In this case
the recursion becomes
\begin{equation}\label{iteration:m}
Y_{n+1} \eqdist (Y^{(1)}_{n}+...+Y^{(m)}_{n} -1)_+ \quad\text{  for all $n\ge 0$},
\end{equation}
where the law of $Y_{0}$ is given by $(1-p) \delta_{0}+ p\nu$, with $\nu$ a probability distribution on $\N$, and $\eqdist$ denotes the equality in law. The corresponding critical value $p_c$ will naturally depend on the initial distribution $\nu$. A weaker form of the conjecture, stated as equation \eqref{conjectureDR}, was proved in \cite{bmvxyz_conjecture_DR} for the model  given by equation \eqref{iteration:m}. Specifically, assuming that $\int x^3 m^x \nu(\dd x)< \infty$ (an integrability condition that is also necessary), it was shown that
\begin{equation}
 F_{Y}((1-p) \delta_{0}+p \nu)\ =\ \exp\big(\!-(p-p_c)^{-(1/2)+o(1)}\big),\quad\text{as $p\downarrow p_c$.} \label{conjectureDR-w}
\end{equation}
Moreover, Chen \cite{Chen-freeenergy} recently established the infinite differentiability of the function $p\mapsto F_{Y}((1-p) \delta_{0}+p \nu)$, thereby proving that the phase transition is of infinite order.

The upper bound for the free energy obtained in \cite{bmvxyz_conjecture_DR} is more precise than the expression in \eqref{conjectureDR-w}, as it shows that the free energy is bounded by $A e^{-\delta (p-p_c)^{1/2}}$ for some constants $A,\delta>0$. However,  removing the $o(1)$ term in the lower bound of \cite{bmvxyz_conjecture_DR}, or determining the exact constant in \eqref{conjectureDR}, remains considerably more challenging. Nevertheless, an exactly solvable version of a continuous-time generalization of the DR model was described in \cite{HMP}. The integrability of the continuous model permits a more detailed analysis of the phase transition near criticality, thereby confirming the corresponding version of \eqref{conjectureDR}. See \cite{dds-continu} for further studies on the  scaling limit of the critical tree in the continuous model. We also refer to \cite{CDDS20} for the associated partial differential equations.

This work aims to investigate and analyze a class of exactly solvable discrete-time Derrida-Retaux models and to prove the Derrida-Retaux conjecture \eqref{conjectureDR} for them. Specifically, we consider a class where the parameter $m$ in \eqref{iteration:m} is replaced by a random variable with a geometric distribution, independent of the sequence $(Y^{(j)}_{n})_{j\ge 1}$. From a recursive tree-based perspective, this modification corresponds to replacing a regular $m$-ary tree with a Galton-Watson tree that has a geometric offspring distribution.

More precisely, we consider the \textit{generalized DR model} $(\nu_{n})_{n\geq 0}$, which is recursively defined as follows. Let $\mathtt{R}$ be a geometric random variable with parameter $p$, written~short\-hand as $\mathtt{R} \eqdist \mathcal{G}(p)$, and let $\mathtt{Z}$ be an independent positive random variable. For $n\in\N$, let $(X_{n}^{(k)}, k\ge 1)$ be i.i.d.~random variables with common law $\nu_{n}$, independent of $(\mathtt{R},\mathtt{Z})$. Then $\nu_{n+1}$ is defined as the law of $X_{n+1}$, where
\begin{equation}\label{eqn:derridaRetauxGeneralized}
X_{n+1}\ =\ \left(\sum_{k=1}^{\mathtt{R}} X_{n}^{(k)}\,-\, \mathtt{Z}\right)_+ .
\end{equation}

In particular, if the laws of $\nu_{0}$ and $\mathtt{Z}$ are supported on $\mathbb{Z}_+$, this process can be interpreted as a parking system on a Galton-Watson tree as follows. Start with a Galton-Watson tree of height $n$ with a geometric offspring law of parameter $p$. Assign to each leaf \  (thus node in generation $n$) a random number of cars according to the law $\nu_{0}$, and to each internal node a random number of parking spots according to the law of $\mathtt{Z}$. Cars then drive toward the root, parking at the first available spot they encounter. The number of cars reaching the root without finding a suitable parking spot follows the law $\nu_{n}$. The process is supercritical if this number grows exponentially as $n$ becomes large, and is critical or subcritical if the number converges in probability to 0. For detailed studies of parking models on trees, see Goldschmidt and Przykucki~\cite{goldschmidt-przykucki}, Aldous et al.~\cite{aldous-contat-curien-henard}, Chen and Contat~\cite{chenContat}, and Contat and Curien~\cite{contat-curien}.

We are interested in the free energy of the generalized DR model $(X_{n})$, defined as
\begin{equation}\label{eqn:generalizedFreeEnergy}
F_{X}(\nu_{0})\ =\ \lim_{n\to\infty} p^n \int x\, \nu_{n}(\dd x)\ =\ \lim_{n\to\infty}p^n\E(X_{n})\  \in\ [0, \infty].
\end{equation}
Similarly to the case \eqref{iteration:m=2}, this limit exists because, from \eqref{eqn:derridaRetauxGeneralized}, we have the inequality $\E(X_{n+1})\le  \E(\mathtt{R}) \E(X_{n})$, with $\E(\mathtt{R})=p^{-1}$, thus $(p^n \E(X_{n}))$ is nonincreasing and nonnegative.

We will consider the generalized DR model \eqref{eqn:derridaRetauxGeneralized} for the following two families of initial distributions $\nu_{0}$:

\begin{itemize}\itemsep2pt
\item {\it Linear fractional distributions,} which are mixtures of the Dirac measure at $0$ and a geometric distribution;
\item {\it Continuous linear fractional distributions,} which are mixtures of the Dirac measure at $0$ and an exponential distribution on $\R_+$.
\end{itemize}
The special case where ${\tt Z}=1$ was recently studied by Li and Zhang in \cite{lz-asymp} and \cite{lz-scaling}. Our main results for these two families of generalized DR models are as follows: We first provide a characterization of the critical curve that separates the regions where $F_X(\nu_{0})>0$ and $F_X(\nu_{0})=0$. We then establish the Derrida--Retaux conjecture by proving \eqref{conjectureDR} for $F_X$ as $\nu_{0}$ approaches the critical value.

The precise statements are provided in Theorems~\ref{cor:h-LF} and~\ref{cor:h-exp}.
To the best of our knowledge, this is the first time the precise asymptotics of the free energy at criticality for a discrete-time Derrida-Retaux model have been computed. Notably, this result is not limited to integer-valued systems or to iterations where $\texttt{Z}=1$ a.s.

The main property of these two families of initial distributions allowing them to be exactly solvable is the following: the transformation described in~\eqref{eqn:derridaRetauxGeneralized} maps a continuous linear fractional distribution to a continuous linear fractional distribution and -- provided that $\mathtt{Z}$ is integer-valued -- a linear fractional distribution to a linear fractional distribution. Therefore, the study of the generalized DR model reduces to studying the time evolution of the two parameters characterizing the discrete or continuous linear fractional distribution. This is similar to the results of \cite{HMP} in which the study of the continuous-time DR model is reduced to the study of a two-dimensional differential system.

We show in the next section that the evolution of the pair of parameters describing the law of the DR model started with a mixture of the Dirac measure at $0$ and a geometric or exponential distribution can be represented (up to an explicit change of variables) by the following iteration
\begin{equation}\label{eqn:recursiveEquationReducedB}
(u_{0},v_{0})\in\R_+ \times \R \quad\text{and}\quad
\begin{pmatrix} u_{n+1}\\ v_{n+1}\end{pmatrix}\ =\ \begin{pmatrix} u_{n}\Psi(v_{n+1})\\ u_{n} +v_{n}\end{pmatrix},
\end{equation}
where $\Psi$ is a nonnegative nondecreasing function satisfying $\Psi(0)=\Psi'(0)=1$. More specifically, we work under the following assumptions for $\Psi$:
\begin{equation}\label{ass:psi}
\Psi : \R\to (0, \infty) \text{ is a bounded, nondecreasing $\mathcal{C}^2$ function with }\Psi(0)=\Psi'(0)=1.\tag{A}
\end{equation}

As a consequence, the phase transition observed in the DR model can be understood by examining the asymptotic properties of the sequence $(u_{n},v_{n})_{n\ge 0}$, which is done in Sections~\ref{sec:basics} to \ref{sec:DRconj}.

We begin with a few observations on the sequence $(u_n,v_n)_{n\ge 0}$ defined recursively in \eqref{eqn:recursiveEquationReducedB}. Since $\Psi$ is positive, the sequence $(v_n)_{n\ge 0}$ is non-decreasing. Moreover, since $\Psi(v)<1$ for $v<0$ and $\Psi(v)>1$ for $v>0$, it follows that $(u_n)_{n\ge 0}$ is decreasing for $n<N_0$ and increasing for $n\ge N_0$, where $N_0:=\sup\{n\in\N:\ v_n\le 0\}$. In particular, the fixed points of the dynamics \eqref{eqn:recursiveEquationReducedB} are given by $\{(0,v): v\in\R\}$. These fixed points are stable for $v<0$ and unstable for $v>0$. The point $(0,0)$ plays a distinguished role: it is the unique critical fixed point and the largest point on the horizontal axis that can arise as a limit point of the evolution. More precisely, we have
\begin{equation}
  \label{eqn:limits}
  \lim_{n\to \infty} (u_{n},v_{n})\ \in\ \{(0,v):v\in (-\infty,0)\} \cup \{(0,0)\} \cup \{(\infty,\infty)\},
\end{equation}
as will be shown in Lemma~\ref{lem:limits}. This decomposition of the possible asymptotic behaviors of the equation enables us to divide the parameter space into three distinct domains:
\begin{equation}\label{eqn:decompositionOfSpace}
\begin{split}
&\mathcal{P} := \left\{(u_{0},v_{0}) : \lim_{n\to\infty} v_{n}=\infty \right\},\quad
\mathcal{C} := \left\{(u_{0},v_{0}) : \lim_{n\to\infty} v_{n}=0 \right\} \\ \text{ and } \quad
&\mathcal{U} := \left\{(u_{0},v_{0}) : \lim_{n\to\infty} v_{n} <0 \right\}.
\end{split}
\end{equation}
These domains are inspired by the depinning transition of polymers, with $\mathcal{P}$ corresponding to the pinned state (associated with a positive free energy), and $\mathcal{U}$ corresponding to the unpinned state (associated with a null free energy). By analogy with the associated DR models, we refer to:
\begin{itemize}\itemsep2pt
\item $\mathcal{P}$ as the supercritical domain, where $v_n \to \infty$,  which corresponds to the weak convergence of $\nu_{n}$ to $\delta_\infty$,
\item $\mathcal{U}$ as the subcritical domain, where $\lim_{n \to \infty} v_n < 0$ and $\lim_{n \to \infty} u_n = 0$, so that $\nu_{n}$ converges weakly to $ \delta_{0}$,
\item $\mathcal{C}$ as the critical domain, which forms the boundary between $\mathcal{P}$ and $\mathcal{U}$, where $(u_n,v_n) \to (0,0)$ and $\nu_n $ converges weakly to $\delta_0$, at a critical rate.
\end{itemize}

Our first main result concerning the recursion \eqref{eqn:recursiveEquationReducedB} is about the crucial properties of the critical curve $h$, which describes the boundary of the set $\mathcal{P}$. This function is defined as follows:
\begin{equation}\label{eqn:defineCriticalCurve}
h(v)\ :=\ \inf\{ u\in\R_+ : (u,v)\in\mathcal{P}\}\quad\text{for all }v\in\R.
\end{equation}
We derive a functional equation that $h$ satisfies, along with several of its regularity properties and its asymptotic behavior near the critical point $(0,0)$. We finally show that the set $\mathcal{C}$ is the graph of $h$.

\begin{theorem}
\label{thm:criticalCurve}
Under the assumptions \eqref{ass:psi}, the function $h$ is nonincreasing, Lipschitz~continuous and satisfies $0\le  h(x)\le  (-x)_+$ for all $x\in\R$. Moreover, $h$ is the unique~nonzero solution to the functional equation
\begin{gather}\label{eqn:functionalEquationCriticalCurve}
h(x+h(x))\ =\ \Psi(x+h(x)) h(x)\quad\text{for all }x\in\R_{-},
\intertext{and satisfies}
h(x)\ \sim\ \frac{x^2}{2},\quad\text{as }x \uparrow 0.\label{h-asymp}
\end{gather}
Finally, the domains $\mathcal{P},\mathcal{C}$ and $\mathcal{U}$ can be characterized as follows:
\begin{equation}\label{eqn:caracterisationOfPCU}
\begin{split}
\mathcal{P}\,&=\,\{(u,v) : u>h(v)\}, \quad \mathcal{C}\,=\,\{(u,v) : u=h(v)\},\\
\text{ and} \quad\mathcal{U} \,&=\,\{ (u,v) : u<h(v)\}.
\end{split}
\end{equation}
\end{theorem}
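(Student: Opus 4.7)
The plan is to exploit the coordinatewise monotonicity of the one-step map $T(u,v):=(u\Psi(u+v),\,u+v)$, together with the trichotomy \eqref{eqn:limits} (Lemma~\ref{lem:limits}), and then to perform a careful dynamical-systems analysis near the degenerate fixed point $(0,0)$ of $T$ in order to pin down the precise asymptotics \eqref{h-asymp}.

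\textbf{Monotonicity, elementary bounds, and partition into $\mathcal{P},\mathcal{C},\mathcal{U}$.} Since $\Psi$ is nondecreasing and $u\ge 0$, an immediate induction shows that $T^n$ is coordinatewise nondecreasing in the initial datum. Hence for each $v$ the slice $\{u\ge 0:(u,v)\in\mathcal{P}\}$ is an upper ray with infimum $h(v)$, and monotonicity in $v_0$ gives that $h$ is nonincreasing. The two inequalities $0\le h(x)\le(-x)_+$ are immediate: for $x\ge 0$, any $u_0>0$ forces $v_1>0$, and since $\Psi(v)>1$ for $v>0$ (because $\Psi(0)=1=\Psi'(0)$ and $\Psi$ is nondecreasing), $(u_n)$ stays bounded below by $u_0$ and $v_n\to\infty$, so $h(x)=0$; for $x<0$, the initial datum $(-x,x)$ maps to $(-x,0)$ after one step and reduces to the previous case. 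Both $\mathcal{P}$ and $\mathcal{U}$ are open: if $v_n\to\infty$, pick $N$ with $v_N\ge 1$ and $u_N>0$, so that $\Psi(v_{n+1})\ge\Psi(1)>1$ for $n\ge N$ yields exponential divergence, which persists under small perturbation of $(u_0,v_0)$ by continuity of $T^N$; if $v_n\to v^\ast<0$, eventually $\Psi(v_{n+1})\le\Psi(v^\ast/2)<1$ and $u_n$ decays geometrically, again robustly. Consequently $\mathcal{C}$ is closed; by definition of $h(v)$ as infimum, $(h(v),v)\notin\mathcal{P}$, and openness of $\mathcal{U}$ together with $(h(v)+\epsilon,v)\in\mathcal{P}$ for all $\epsilon>0$ forbids $(h(v),v)\in\mathcal{U}$, so $(h(v),v)\in\mathcal{C}$, giving \eqref{eqn:caracterisationOfPCU}. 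The functional equation \eqref{eqn:functionalEquationCriticalCurve} then follows in one line from the invariance $T(\mathcal{C})\subset\mathcal{C}$ applied to $(h(v),v)$, using $v+h(v)\le 0$ for $v\le 0$.

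\textbf{Upper bound $h(x)\le(1+\epsilon)x^2/2$ and Lipschitz regularity.} For $\epsilon>0$, set $h^+(x):=(1+\epsilon)x^2/2$. A direct Taylor expansion using $\Psi(z)=1+z+O(z^2)$ yields, for $x_0<0$ close to $0$,
\[
T_1\bigl(h^+(x_0),x_0\bigr)\,-\,h^+\bigl(T_2(h^+(x_0),x_0)\bigr)\;=\;-(1+\epsilon)\epsilon\,\tfrac{x_0^3}{2}\,+\,O(x_0^4)\;>\;0,
\]
i.e.\ starting on the parabola $h^+$ lands strictly above it after one iteration. Combining with the monotonicity of $T$ and the fact that $v\mapsto h^+(v)$ is decreasing on $(-\infty,0)$ shows that the region $\{(u,v):u\ge h^+(v),\ v\le 0\}$ is forward-invariant as long as $v_n$ remains nonpositive. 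If $h(v_0)>h^+(v_0)$ for some small $v_0<0$, then $(h^+(v_0),v_0)\in\mathcal{U}$ by the partition; invariance forces $v_n\le 0$ throughout, hence $v_n\to v^\ast<0$ by Lemma~\ref{lem:limits}, but $u_n\ge h^+(v_n)\to h^+(v^\ast)>0$ contradicts $u_n\to 0$. Therefore $h(v_0)\le h^+(v_0)$ near $0$, and letting $\epsilon\downarrow 0$ gives $\limsup_{x\uparrow 0}2h(x)/x^2\le 1$. Lipschitz continuity of $h$ then follows from the monotonicity of $h$, the bound $0\le h\le(-x)_+$, and a bootstrap based on the functional equation: the inverse recursion $x=y-h(y)/\Psi(y)$ propagates a local Lipschitz estimate near $0$ backwards to all $x<0$ with a uniform constant depending only on $\|\Psi\|_\infty$.

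\textbf{Exact asymptotics, uniqueness --- the main obstacle.} The linearisation of $T$ at $(0,0)$ is the identity, so no standard invariant-manifold theorem applies. Along the canonical orbit $(u_n,v_n):=T^n(h(v_0),v_0)\to(0,0)$ on $\mathcal{C}$, I set $R_n:=u_n-v_n^2/2$. Writing $u_{n+1}-u_n=u_n(\Psi(v_{n+1})-1)$ and $(v_{n+1}^2-v_n^2)/2=u_n v_{n+1}-u_n^2/2$, and expanding $\Psi(z)-1-z=O(z^2)$, one obtains
\[
R_{n+1}-R_n\;=\;\tfrac{u_n^2}{2}\,+\,O\bigl(u_n v_{n+1}^2\bigr).
\]
Summing from some $n^\ast$ with $|v_{n^\ast}|$ small to infinity, and using $R_\infty=0$, $\sum_{n\ge n^\ast}u_n=-v_{n^\ast}$, the monotonicity $u_{n+1}\le u_n$ valid for $v_n\le 0$, and the upper bound $u_n\le(1+\epsilon)v_n^2/2$ from the previous step, gives $|R_n|=O(|v_n|^3)$ for $n\ge n^\ast$, whence $2u_n/v_n^2=1+O(|v_n|)\to 1$ along the orbit. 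Sandwiching an arbitrary $x\uparrow 0$ between two consecutive orbit points $v_n\le x\le v_{n+1}$ and using $v_{n+1}/v_n\to 1$ (since $v_{n+1}-v_n=u_n=o(|v_n|)$) together with the monotonicity of $h$ promotes this to $h(x)\sim x^2/2$, proving \eqref{h-asymp}. Uniqueness of the nonzero solution follows from the same dynamical picture: any nonzero nonincreasing $\tilde h$ with $0\le\tilde h\le(-x)_+$ satisfying \eqref{eqn:functionalEquationCriticalCurve} generates an orbit on its graph converging to $(0,v^\ast)$ for some $v^\ast\le 0$, and monotonicity of $\tilde h$ together with the boundary condition $\tilde h(0^-)=0$ rules out $v^\ast<0$, placing the orbit on $\mathcal{C}$ and forcing $\tilde h=h$. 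I expect the main obstacle to be the interplay between the two bounds: the upper bound $h\le(1+\epsilon)x^2/2$ requires the delicate invariance argument at the degenerate fixed point, and the sharp constant $\tfrac12$ (rather than merely $h(x)\asymp x^2$) emerges from the $R_n$ recursion only once that upper bound is available to close it.
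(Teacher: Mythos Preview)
Your route is genuinely different from the paper's.  The paper \emph{constructs} the critical curve: it builds $g(x)=x+h(x)$ as the monotone limit of an explicit iteration $(g_n)_{n\ge 1}$ (starting from the solution of $y'=\Psi(y)$), proves by induction that each $g_n$ is nondecreasing and $1$-Lipschitz with $g_n''\in[a,b]$ near $0$, and reads off \eqref{gtaylor} by sandwiching $g$ between $g_1$ and a quadratic $Q(x)=x+wx^2$.  Only afterwards does it identify $g-\mathrm{id}$ with the dynamically defined $h$.  You go the other way: define $h$ from the dynamics, use openness of $\mathcal{P}$ and $\mathcal{U}$ to get the partition \eqref{eqn:caracterisationOfPCU} directly, and then analyse a single critical orbit.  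Your barrier argument for the upper bound is close in spirit to the paper's induction $g_n\le Q$, but your $R_n:=u_n-v_n^2/2$ computation for the sharp constant is cleaner than the paper's route (comparison with $g_1$), and it avoids building the whole approximating sequence.

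There is one genuine gap: your Lipschitz argument.  The sentence ``the inverse recursion $x=y-h(y)/\Psi(y)$ propagates a local Lipschitz estimate backwards'' is not a proof --- that map is $g^{-1}$, and you would need $g$ to be a monotone bijection of $(-\infty,0]$ onto itself, which is equivalent to what you are trying to show and is not obviously true (a priori the backward iterates need not cover all of $\R_-$).  A clean fix entirely within your framework: for $x_1<x_2\le 0$ set $p:=(h(x_2)+x_2-x_1,\,x_1)$; then $T(p)$ has $v$-coordinate $h(x_2)+x_2=g(x_2)$ and $u$-coordinate $(h(x_2)+x_2-x_1)\Psi(g(x_2))\ge h(x_2)\Psi(g(x_2))=h(g(x_2))$, so $T(p)\in\mathcal{P}\cup\mathcal{C}$; since $T$ preserves each of $\mathcal{P},\mathcal{C},\mathcal{U}$, this forces $p\notin\mathcal{U}$, i.e.\ $h(x_2)+x_2-x_1\ge h(x_1)$, and $h$ is $1$-Lipschitz.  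Two smaller points you should also make explicit: (i) each $\mathcal{C}$-slice $\{u:(u,v)\in\mathcal{C}\}$ is a singleton (needed for $\{u<h(v)\}=\mathcal{U}$; it follows from $v_n^{(2)}-v_n^{(1)}\ge u_0^{(2)}-u_0^{(1)}>0$ when both orbits are supposed to converge to $0$); (ii) the ``robustly'' in your openness-of-$\mathcal{U}$ sketch hides a genuine bootstrap (choose $N$ with $u_N$ small and $v_N<v^\ast/2$, then a small perturbation keeps $\sum_{n\ge N}u_n'$ small enough that $v_n'<v^\ast/2$ for all $n\ge N$).
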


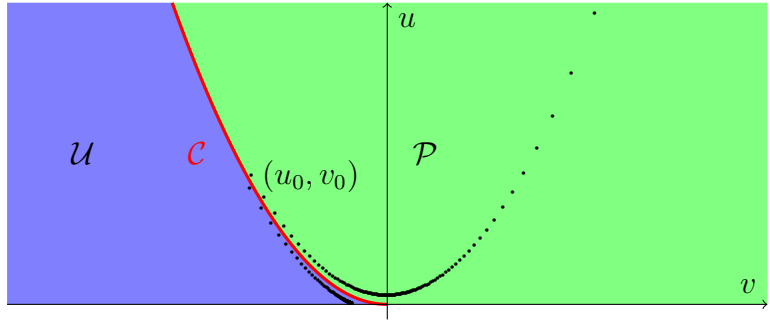
\begin{figure}[ht!]
\begin{tikzpicture}
\fill [color=blue!50] (0,0) -- (-5,0) -- (-5,4) -- (-2.828,4) -- plot[domain=-2.828:0] (\x,{\x*\x/2})  -- cycle;
\fill [color =green!50] (-2.828,4) -- plot[domain=-2.828:0] (\x,{\x*\x/2}) -- (5,0) -- (5,4)  -- cycle;
\draw [color=red, very thick, domain =-2.828:0, samples=200] plot (\x,{\x*\x/2});
\draw [color=red, very thick] (0,0)--(4,0);
\draw (-4,2) node {$\mathcal{U}$};
\draw (.5,2) node {$\mathcal{P}$};
\draw [color=red] (-2.5,2) node {$\mathcal{C}$};

\draw[->] (-5,0) -- (5,0) node[above left] {$v$};
\draw[->] (0,-.2) -- (0,4) node[below right] {$u$};
\draw (-1.79,1.703) node {$\cdot$}node[right] {$(u_{0},v_{0})$};
\draw (-1.62,1.414) node {$\cdot$};
\draw (-1.478,1.196) node {$\cdot$};
\draw (-1.359,1.027) node {$\cdot$};
\draw (-1.256,0.893) node {$\cdot$};
\draw (-1.167,0.786) node {$\cdot$};
\draw (-1.088,0.698) node {$\cdot$};
\draw (-1.018,0.624) node {$\cdot$};
\draw (-0.956,0.563) node {$\cdot$};
\draw (-0.9,0.511) node {$\cdot$};
\draw (-0.848,0.467) node {$\cdot$};
\draw (-0.802,0.429) node {$\cdot$};
\draw (-0.759,0.395) node {$\cdot$};
\draw (-0.719,0.366) node {$\cdot$};
\draw (-0.683,0.341) node {$\cdot$};
\draw (-0.649,0.318) node {$\cdot$};
\draw (-0.617,0.299) node {$\cdot$};
\draw (-0.587,0.281) node {$\cdot$};
\draw (-0.559,0.265) node {$\cdot$};
\draw (-0.532,0.251) node {$\cdot$};
\draw (-0.507,0.238) node {$\cdot$};
\draw (-0.484,0.226) node {$\cdot$};
\draw (-0.461,0.215) node {$\cdot$};
\draw (-0.439,0.206) node {$\cdot$};
\draw (-0.419,0.197) node {$\cdot$};
\draw (-0.399,0.189) node {$\cdot$};
\draw (-0.38,0.182) node {$\cdot$};
\draw (-0.362,0.175) node {$\cdot$};
\draw (-0.344,0.169) node {$\cdot$};
\draw (-0.328,0.164) node {$\cdot$};
\draw (-0.311,0.158) node {$\cdot$};
\draw (-0.295,0.154) node {$\cdot$};
\draw (-0.28,0.149) node {$\cdot$};
\draw (-0.265,0.145) node {$\cdot$};
\draw (-0.25,0.142) node {$\cdot$};
\draw (-0.236,0.138) node {$\cdot$};
\draw (-0.222,0.135) node {$\cdot$};
\draw (-0.209,0.132) node {$\cdot$};
\draw (-0.196,0.13) node {$\cdot$};
\draw (-0.183,0.127) node {$\cdot$};
\draw (-0.17,0.125) node {$\cdot$};
\draw (-0.157,0.123) node {$\cdot$};
\draw (-0.145,0.122) node {$\cdot$};
\draw (-0.133,0.12) node {$\cdot$};
\draw (-0.121,0.118) node {$\cdot$};
\draw (-0.109,0.117) node {$\cdot$};
\draw (-0.097,0.116) node {$\cdot$};
\draw (-0.086,0.115) node {$\cdot$};
\draw (-0.074,0.114) node {$\cdot$};
\draw (-0.063,0.113) node {$\cdot$};
\draw (-0.051,0.113) node {$\cdot$};
\draw (-0.04,0.112) node {$\cdot$};
\draw (-0.029,0.112) node {$\cdot$};
\draw (-0.018,0.112) node {$\cdot$};
\draw (-0.007,0.112) node {$\cdot$};
\draw (0.005,0.112) node {$\cdot$};
\draw (0.016,0.112) node {$\cdot$};
\draw (0.027,0.112) node {$\cdot$};
\draw (0.038,0.113) node {$\cdot$};
\draw (0.05,0.113) node {$\cdot$};
\draw (0.061,0.114) node {$\cdot$};
\draw (0.072,0.115) node {$\cdot$};
\draw (0.084,0.116) node {$\cdot$};
\draw (0.095,0.117) node {$\cdot$};
\draw (0.107,0.118) node {$\cdot$};
\draw (0.119,0.12) node {$\cdot$};
\draw (0.131,0.121) node {$\cdot$};
\draw (0.143,0.123) node {$\cdot$};
\draw (0.155,0.125) node {$\cdot$};
\draw (0.168,0.127) node {$\cdot$};
\draw (0.18,0.129) node {$\cdot$};
\draw (0.193,0.132) node {$\cdot$};
\draw (0.206,0.134) node {$\cdot$};
\draw (0.22,0.137) node {$\cdot$};
\draw (0.234,0.14) node {$\cdot$};
\draw (0.248,0.144) node {$\cdot$};
\draw (0.262,0.148) node {$\cdot$};
\draw (0.277,0.152) node {$\cdot$};
\draw (0.292,0.156) node {$\cdot$};
\draw (0.308,0.161) node {$\cdot$};
\draw (0.324,0.166) node {$\cdot$};
\draw (0.34,0.172) node {$\cdot$};
\draw (0.357,0.178) node {$\cdot$};
\draw (0.375,0.184) node {$\cdot$};
\draw (0.394,0.191) node {$\cdot$};
\draw (0.413,0.199) node {$\cdot$};
\draw (0.433,0.208) node {$\cdot$};
\draw (0.453,0.217) node {$\cdot$};
\draw (0.475,0.227) node {$\cdot$};
\draw (0.498,0.238) node {$\cdot$};
\draw (0.522,0.251) node {$\cdot$};
\draw (0.547,0.264) node {$\cdot$};
\draw (0.573,0.279) node {$\cdot$};
\draw (0.601,0.296) node {$\cdot$};
\draw (0.631,0.314) node {$\cdot$};
\draw (0.662,0.335) node {$\cdot$};
\draw (0.696,0.358) node {$\cdot$};
\draw (0.731,0.383) node {$\cdot$};
\draw (0.77,0.412) node {$\cdot$};
\draw (0.811,0.445) node {$\cdot$};
\draw (0.855,0.482) node {$\cdot$};
\draw (0.904,0.525) node {$\cdot$};
\draw (0.956,0.574) node {$\cdot$};
\draw (1.014,0.631) node {$\cdot$};
\draw (1.077,0.697) node {$\cdot$};
\draw (1.146,0.775) node {$\cdot$};
\draw (1.224,0.867) node {$\cdot$};
\draw (1.311,0.978) node {$\cdot$};
\draw (1.408,1.111) node {$\cdot$};
\draw (1.52,1.275) node {$\cdot$};
\draw (1.647,1.477) node {$\cdot$};
\draw (1.795,1.732) node {$\cdot$};
\draw (1.968,2.059) node {$\cdot$};
\draw (2.174,2.486) node {$\cdot$};
\draw (2.422,3.059) node {$\cdot$};
\draw (2.728,3.848) node {$\cdot$};

\draw (-1.81,1.537) node {$\cdot$};
\draw (-1.656,1.27) node {$\cdot$};
\draw (-1.529,1.067) node {$\cdot$};
\draw (-1.423,0.908) node {$\cdot$};
\draw (-1.332,0.783) node {$\cdot$};
\draw (-1.254,0.681) node {$\cdot$};
\draw (-1.185,0.598) node {$\cdot$};
\draw (-1.126,0.528) node {$\cdot$};
\draw (-1.073,0.47) node {$\cdot$};
\draw (-1.026,0.42) node {$\cdot$};
\draw (-0.984,0.378) node {$\cdot$};
\draw (-0.946,0.341) node {$\cdot$};
\draw (-0.912,0.309) node {$\cdot$};
\draw (-0.881,0.281) node {$\cdot$};
\draw (-0.853,0.257) node {$\cdot$};
\draw (-0.827,0.235) node {$\cdot$};
\draw (-0.804,0.216) node {$\cdot$};
\draw (-0.782,0.199) node {$\cdot$};
\draw (-0.762,0.183) node {$\cdot$};
\draw (-0.744,0.169) node {$\cdot$};
\draw (-0.727,0.157) node {$\cdot$};
\draw (-0.711,0.145) node {$\cdot$};
\draw (-0.697,0.135) node {$\cdot$};
\draw (-0.683,0.126) node {$\cdot$};
\draw (-0.671,0.117) node {$\cdot$};
\draw (-0.659,0.109) node {$\cdot$};
\draw (-0.648,0.102) node {$\cdot$};
\draw (-0.638,0.095) node {$\cdot$};
\draw (-0.628,0.089) node {$\cdot$};
\draw (-0.62,0.084) node {$\cdot$};
\draw (-0.611,0.078) node {$\cdot$};
\draw (-0.603,0.074) node {$\cdot$};
\draw (-0.596,0.069) node {$\cdot$};
\draw (-0.589,0.065) node {$\cdot$};
\draw (-0.583,0.061) node {$\cdot$};
\draw (-0.576,0.058) node {$\cdot$};
\draw (-0.571,0.054) node {$\cdot$};
\draw (-0.565,0.051) node {$\cdot$};
\draw (-0.56,0.048) node {$\cdot$};
\draw (-0.555,0.046) node {$\cdot$};
\draw (-0.551,0.043) node {$\cdot$};
\draw (-0.546,0.041) node {$\cdot$};
\draw (-0.542,0.038) node {$\cdot$};
\draw (-0.539,0.036) node {$\cdot$};
\draw (-0.535,0.034) node {$\cdot$};
\draw (-0.532,0.032) node {$\cdot$};
\draw (-0.528,0.031) node {$\cdot$};
\draw (-0.525,0.029) node {$\cdot$};
\draw (-0.522,0.028) node {$\cdot$};
\draw (-0.52,0.026) node {$\cdot$};
\draw (-0.517,0.025) node {$\cdot$};
\draw (-0.514,0.023) node {$\cdot$};
\draw (-0.512,0.022) node {$\cdot$};
\draw (-0.51,0.021) node {$\cdot$};
\draw (-0.508,0.02) node {$\cdot$};
\draw (-0.506,0.019) node {$\cdot$};
\draw (-0.504,0.018) node {$\cdot$};
\draw (-0.502,0.017) node {$\cdot$};
\draw (-0.5,0.016) node {$\cdot$};
\draw (-0.499,0.015) node {$\cdot$};
\draw (-0.497,0.015) node {$\cdot$};
\draw (-0.496,0.014) node {$\cdot$};
\draw (-0.494,0.013) node {$\cdot$};
\draw (-0.493,0.013) node {$\cdot$};
\draw (-0.492,0.012) node {$\cdot$};
\draw (-0.491,0.011) node {$\cdot$};
\draw (-0.49,0.011) node {$\cdot$};
\draw (-0.488,0.01) node {$\cdot$};
\draw (-0.487,0.01) node {$\cdot$};
\draw (-0.486,0.009) node {$\cdot$};
\draw (-0.486,0.009) node {$\cdot$};
\draw (-0.485,0.008) node {$\cdot$};
\draw (-0.484,0.008) node {$\cdot$};
\draw (-0.483,0.008) node {$\cdot$};
\draw (-0.482,0.007) node {$\cdot$};
\draw (-0.482,0.007) node {$\cdot$};
\draw (-0.481,0.007) node {$\cdot$};
\draw (-0.48,0.006) node {$\cdot$};
\draw (-0.48,0.006) node {$\cdot$};
\draw (-0.479,0.006) node {$\cdot$};
\draw (-0.478,0.005) node {$\cdot$};
\draw (-0.478,0.005) node {$\cdot$};
\draw (-0.477,0.005) node {$\cdot$};
\draw (-0.477,0.005) node {$\cdot$};
\draw (-0.476,0.004) node {$\cdot$};
\draw (-0.476,0.004) node {$\cdot$};
\draw (-0.476,0.004) node {$\cdot$};
\draw (-0.475,0.004) node {$\cdot$};
\draw (-0.475,0.004) node {$\cdot$};
\draw (-0.474,0.003) node {$\cdot$};
\draw (-0.474,0.003) node {$\cdot$};
\draw (-0.474,0.003) node {$\cdot$};
\draw (-0.474,0.003) node {$\cdot$};
\draw (-0.473,0.003) node {$\cdot$};
\draw (-0.473,0.003) node {$\cdot$};
\draw (-0.473,0.003) node {$\cdot$};
\draw (-0.472,0.002) node {$\cdot$};
\draw (-0.472,0.002) node {$\cdot$};
\draw (-0.472,0.002) node {$\cdot$};
\draw (-0.472,0.002) node {$\cdot$};
\draw (-0.472,0.002) node {$\cdot$};
\draw (-0.471,0.002) node {$\cdot$};
\draw (-0.471,0.002) node {$\cdot$};
\draw (-0.471,0.002) node {$\cdot$};
\draw (-0.471,0.002) node {$\cdot$};
\draw (-0.471,0.002) node {$\cdot$};
\draw (-0.47,0.001) node {$\cdot$};
\draw (-0.47,0.001) node {$\cdot$};
\draw (-0.47,0.001) node {$\cdot$};
\draw (-0.47,0.001) node {$\cdot$};
\end{tikzpicture}
\caption{\small Decomposition of the phase space for a function $\Psi$ defined by $\Psi(x)= x^2\big/{2(1+x-\sqrt{1+2x})}$ for $x \in [-0.5,0.5]$, extended to $\R$ in such a way that \eqref{ass:psi} holds. For this function, the critical curve $h$ is given by $x \mapsto x^2/2$ on $[-0.5,0]$ and drawn in red.  Additionally, slightly supercritical and subcritical trajectories for $(u,v)$ are depicted.} \label{fig:phaseDecomposition}
\end{figure}

We do not have a way to construct explicit solutions to the involution-type equation \eqref{eqn:functionalEquationCriticalCurve}, even for some natural choices of the function $\Psi$ associated with the stochastic equations described in Section~\ref{sec:stochastique}. However, it is relatively straightforward to first select $h$ and then choose a function $\Psi$ such that $h$ satisfies \eqref{eqn:functionalEquationCriticalCurve}, as demonstrated in Figure~\ref{fig:phaseDecomposition}.

We use the function $h$ to quantify the distance from a given starting point to the critical curve and introduce an analogue of the free energy for the recursion \eqref{eqn:recursiveEquationReducedB} that allows us to state and prove an analog of the Derrida-Retaux conjecture. Specifically, we define
$$ \Psi(\infty)\,=\,\lim_{x\to \infty} \Psi(x), $$
which always exists and is finite as $\Psi$ is assumed to be monotone and bounded. The analog of the free energy associated with the recursion \eqref{eqn:recursiveEquationReducedB} is defined by
\begin{equation}\label{eqn:freeEnergyb}
F(u_{0},v_{0})\ :=\ \liminf_{n \to\infty} \Psi(\infty)^{-n} u_{n}\ \in\ [0,\infty).
\end{equation}
We will show that this quantity corresponds to the free energy of the generalized Derrida--Retaux model. Some general observations on this notion of free energy are presented in the next result.

\begin{proposition}
\label{prop:freeEnergy}
Under assumptions \eqref{ass:psi}, we have
$$ F(u_{0},v_{0})\ =\ \lim_{n\to\infty} \Psi(\infty)^{-n} u_{n}\ =\ \inf_{n\in\N} \Psi(\infty)^{-n} u_{n}. $$
Furthermore, if
\begin{equation}\label{ass:technical}
\sum_{j=1}^\infty (\Psi(\infty)-\Psi(\kappa^j))<\infty\quad\text{for all }\kappa>1,\tag{B}
\end{equation}
then we have the equivalence
$$ (u_{0},v_{0})\in\mathcal{P}\ \iff\ F(u_{0},v_{0})>0. $$
\end{proposition}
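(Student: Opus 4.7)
My plan for Proposition~\ref{prop:freeEnergy} is as follows. For the first assertion, I would observe that \eqref{ass:psi} implies $\Psi(v_{n+1})\le \Psi(\infty)$ for every $n$, so $u_{n+1}\le \Psi(\infty)\,u_{n}$, and the sequence $(\Psi(\infty)^{-n}u_{n})_{n\ge 0}$ is nonincreasing. This alone makes the defining $\liminf$ both a limit and an infimum. I would also record at this stage that $\Psi(\infty)>1$: since $\Psi(0)=\Psi'(0)=1$, we have $\Psi(\epsilon)>1$ for all sufficiently small $\epsilon>0$, and hence $\Psi(\infty)\ge \Psi(\epsilon)>1$. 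This immediately yields the easy direction of the equivalence: if $F(u_{0},v_{0})>0$, then $u_{n}\ge F(u_{0},v_{0})\,\Psi(\infty)^{n}\to\infty$, and summing the recursion $v_{n+1}-v_{n}=u_{n}$ forces $v_{n}\to\infty$, so that $(u_{0},v_{0})\in\mathcal{P}$. Note that this step does not use \eqref{ass:technical}.

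For the converse, assume $(u_{0},v_{0})\in\mathcal{P}$, and the first substantial step is to establish geometric growth of $v_{n}$. The degenerate case $u_{0}=0$ is ruled out because it would force $v_{n}\equiv v_{0}<\infty$, contradicting $v_{n}\to\infty$; hence $u_{0}>0$, and since $\Psi>0$ this propagates to $u_{n}>0$ for all $n$. Pick $n_{0}$ with $v_{n_{0}}>0$ and set $\lambda:=\Psi(v_{n_{0}})$; once again $\Psi(0)=\Psi'(0)=1$ ensures $\lambda>1$. Monotonicity of $\Psi$ and of $(v_{n})$ gives $\Psi(v_{n})\ge \lambda$ for every $n\ge n_{0}$, hence $u_{n}\ge u_{n_{0}}\lambda^{n-n_{0}}$ by iteration of the $u$-recursion. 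Summing $v_{n+1}-v_{n}=u_{n}$ then produces a constant $c>0$ with $v_{n}\ge c\lambda^{n}$ for all $n$ sufficiently large.

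The last step is to leverage \eqref{ass:technical}. I would write
\begin{equation*}
\Psi(\infty)^{-n}u_{n}\ =\ u_{0}\prod_{k=1}^{n}\frac{\Psi(v_{k})}{\Psi(\infty)},
\end{equation*}
then fix any $\kappa\in(1,\lambda)$, use $v_{n}\ge c\lambda^{n}\ge \kappa^{n}$ for $n$ large, and apply monotonicity of $\Psi$ to get $\Psi(\infty)-\Psi(v_{n})\le \Psi(\infty)-\Psi(\kappa^{n})$. Assumption \eqref{ass:technical} makes the right-hand side summable, so $\sum_{n}(\Psi(\infty)-\Psi(v_{n}))<\infty$. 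Since all factors in the product lie in $(0,1]$, the standard criterion for infinite products gives convergence to a strictly positive limit, and therefore $F(u_{0},v_{0})>0$.

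The principal obstacle is really the geometric lower bound $v_{n}\gtrsim \lambda^{n}$, which is the exact growth rate that makes \eqref{ass:technical} applicable; once it is in hand, the remainder is elementary manipulation of the recursion and classical convergence of infinite products.
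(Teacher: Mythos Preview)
Your proof is correct and follows essentially the same route as the paper: monotonicity of $(\Psi(\infty)^{-n}u_{n})$ for the first assertion, and for the equivalence a geometric lower bound $v_{n}\gtrsim\lambda^{n}$ feeding into the product formula and condition \eqref{ass:technical}. The only organizational difference is that the paper packages the growth estimate as a separate lemma (its Lemma~\ref{lem:limits}, giving $\tfrac{1}{n}\log u_{n}\to\log\Psi(\infty)$), whereas you derive the needed geometric bound directly inside the proof; both work equally well since \eqref{ass:technical} only needs some $\kappa>1$.
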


Observe that the (primarily technical) assumption \eqref{ass:technical} expresses that $\Psi(x)$ converges sufficiently fast to $\Psi(\infty)$ as $x\to \infty$. This is implied, for example, by the condition
$$ \lim_{x\to \infty} (\log x)^{1+\delta} \left(\Psi(x)-\Psi(\infty)\right)=0\quad\text{for some }\delta>0. $$
Under assumption \eqref{ass:technical}, we can identify the supercritical domain with the set of parameters such that the free energy is positive.

We can now state the Derrida-Retaux conjecture for the recursive equation \eqref{eqn:recursiveEquationReducedB} as follows.

\begin{theorem}
\label{thm:main}
Under the assumptions \eqref{ass:psi} and \eqref{ass:technical}, for all $v\le  0$, there exists ${\mathbb C}_{v}>0$ such that
$$ \lim_{\epsilon\to 0} \epsilon^{1/2} \log F(h(v)+\epsilon,v)\ =\ -{\mathbb C}_{v}. $$
Moreover, we have
\begin{equation}
{\mathbb C}_{0}\ =\ \frac{1}{2} \lim_{v\to 0-} {\mathbb C}_{v}\ =\ \frac{\pi}{\sqrt{2}} \log \Psi(\infty).
\end{equation}
\end{theorem}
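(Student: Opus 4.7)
Iterating $u_{n+1}=u_n\Psi(v_{n+1})$ gives $u_n = u_0 \prod_{k=1}^n \Psi(v_k)$, hence by Proposition~\ref{prop:freeEnergy} applied to $(u_0,v_0)\in\mathcal{P}$,
$$
-\log F(u_0, v_0)\ =\ -\log u_0\ +\ \sum_{k=1}^\infty g(v_k),
\qquad g(v) := \log\bigl(\Psi(\infty)/\Psi(v)\bigr) \ge 0.
$$
The plan is to analyze this sum for the supercritical orbit $(u_n^\epsilon, v_n^\epsilon)_{n\ge 0}$ started from $(h(v)+\epsilon, v)$, recognising it as a Riemann sum and reducing it to an explicit integral.

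\textbf{Near-critical dynamics.} The critical curve is left invariant by the recursion, as expressed in~\eqref{eqn:functionalEquationCriticalCurve}. Introducing $\hat w_n := u_n - h(v_n)$ and expanding the map $(u,v)\mapsto(u\Psi(v+u),v+u)-(h(v+u),v+u)$ at the critical orbit, one obtains the linearised dynamics
$$
\hat w_{n+1}\ =\ \lambda(v_n) \hat w_n\ +\ O(\hat w_n^2),\qquad \lambda(v):=\frac{\Psi(v+h(v))}{1+h'(v)},
$$
with $\lambda(0)=1$ and $\lambda(v)=1+O(v^2)$ near $0$ by~\eqref{h-asymp}. A Riemann-sum estimate on the critical window $\{n:|v_n^\epsilon|\le\delta(\epsilon)\}$, for some $\delta(\epsilon)\to 0$ with $\delta(\epsilon)\gg\sqrt\epsilon$ (e.g.\ $\delta(\epsilon)=\epsilon^{1/4}$), yields $\prod \lambda(v_n^\epsilon)=1+o(1)$ across this window; a separate tracking-phase analysis for $v_n^\epsilon\in[v,-\delta(\epsilon)]$, applicable when $v<0$, shows that $\hat w_n^\epsilon$ enters the critical window with value $\epsilon K(v)(1+o(1))$ for some factor $K(v)>0$ satisfying $K(v)\to 1$ as $v\uparrow 0$. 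Consequently, $\hat w_n^\epsilon=\epsilon K(v)(1+o(1))$ throughout the critical window, with the convention $K(0)=1$.

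\textbf{Sum to integral and conclusion.} Because $v_{n+1}-v_n=u_n=h(v_n)+\hat w_n$ is small and $g$ is continuous, a Riemann-sum comparison gives
$$
\sum_{k:|v_k^\epsilon|\le\delta(\epsilon)} g(v_k^\epsilon)\ =\ \log\Psi(\infty)\!\int_{v\vee(-\delta)}^{\delta}\frac{dv'}{h(v')+\epsilon K(v)}\,(1+o(1));
$$
contributions from outside the critical window are $O(1)$ uniformly in $\epsilon$: the escape part uses~\eqref{ass:technical} together with the geometric growth of $v_n$ in the escape phase, while the tracking part uses $h(v')$ bounded away from~$0$ on $[v,-\delta]$. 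Using $h(v')\sim v'^2/2$ and the substitution $v'=\sqrt{2\epsilon K(v)}\tan\theta$,
$$
\int_{v\vee(-\delta)}^{\delta}\!\frac{dv'}{v'^2/2+\epsilon K(v)}\ \sim\ \sqrt{\frac{2}{\epsilon K(v)}}\,\Bigl[\tfrac{\pi}{2} - \arctan\bigl(\tfrac{v\vee(-\delta)}{\sqrt{2\epsilon K(v)}}\bigr)\Bigr].
$$
For $v<0$ and $\epsilon$ small enough that $\delta(\epsilon)<|v|$, the arctangent tends to $-\pi/2$, giving $\pi\sqrt{2/(\epsilon K(v))}$; for $v=0$, the arctangent is identically $0$, giving $(\pi/2)\sqrt{2/\epsilon}$. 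Multiplying by $\log\Psi(\infty)$ yields $C_v=\sqrt{2}\pi\log\Psi(\infty)/\sqrt{K(v)}>0$ and $C_0=(\pi/\sqrt{2})\log\Psi(\infty)$, and $K(v)\to 1$ as $v\uparrow 0$ produces $\lim_{v\to 0^-}C_v=\sqrt{2}\pi\log\Psi(\infty)=2C_0$, as announced.

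\textbf{Main obstacle.} The principal difficulty is the uniform control $\hat w_n^\epsilon=\epsilon K(v)(1+o(1))$ with relative error $o(1)$ over the $\Theta(\epsilon^{-1/2})$ steps spent in the critical window: the multiplier $\lambda(v)-1=O(v^2)$ is only marginally above $1$, and higher-order corrections from the Taylor expansions of $\Psi$ and $h$ can accumulate unless handled carefully. Establishing matching upper and lower bounds on $\hat w_n^\epsilon$ (needed to pin down the sharp constant, not just its order) will likely require a discrete Gronwall-type inequality combined with monotone comparisons between nearby orbits. Once such bounds are in place, the Riemann-sum reduction and the integral evaluation proceed routinely and yield simultaneous upper and lower bounds on $-\log F$.
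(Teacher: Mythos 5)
Your overall strategy—writing $-\log F(u_0,v_0) = -\log u_0 + \sum_{k\ge 1}\log(\Psi(\infty)/\Psi(v_k))$ and turning the sum into an integral via $v_{k+1}-v_k = u_k$—is a legitimate reformulation of the paper's approach, which instead works with the escape time $n_*$ and shows $\varepsilon^{1/2}n_*\to c_\star^{-1/2}\pi\sqrt{2}$ (Lemma~\ref{lem:free} then gives $\log F \approx -n_*\log\Psi(\infty)$). Both routes rely on tracking the distance $\hat w_n = u_n - h(v_n)$ (the paper's $\Delta_n$), on $h(v)\sim v^2/2$, and on reducing the critical window to the explicit integral $\int (v'^2/2 + \epsilon K)^{-1}dv'$. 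Your identification of the near-critical multiplier $\lambda(v) = \Psi(v+h(v))/(1+h'(v))$ coincides exactly with the paper's $A_n$, and your constant $K(v)$ is the paper's $c_\star$.

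However, there is a genuine error in the ascent half of the critical window, not merely a deferred technicality. You claim that $\lambda(v) = 1+O(v^2)$ near $0$ and that $\hat w^\epsilon_n = \epsilon K(v)(1+o(1))$ \emph{throughout} the window. Both statements fail for $v_n>0$. There $h\equiv 0$ and $h'\equiv 0$, so $\lambda(v)=\Psi(v)=1+v+O(v^2)$—the multiplier is $1+O(v)$, not $1+O(v^2)$—and $\hat w_n = u_n$, which grows: starting from $u\approx\epsilon K$ at $v_n\approx 0$, by $v_n=\delta(\epsilon)=\epsilon^{1/4}$ one has $u_n\approx\delta^2/2+\epsilon K\approx\epsilon^{1/2}/2 \gg \epsilon$. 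Consequently the Riemann-sum identity you write, $\sum_k g(v_k)\approx \log\Psi(\infty)\int (h(v')+\epsilon K)^{-1}dv'$, is incorrect on $[0,\delta]$: there $h(v')=0$, so the claimed integrand is $1/(\epsilon K)$, which would yield a contribution of order $\delta/\epsilon = \epsilon^{-3/4}$ instead of the correct $\epsilon^{-1/2}$. You then substitute $h(v')\sim v'^2/2$ on $[0,\delta]$—invalid, since $h$ vanishes there—and the two errors cancel to produce the right answer $\int(v'^2/2+\epsilon K)^{-1}dv'$. The cancellation is not accidental: in the ascent phase the correct relation is $u(v')\approx v'^2/2+\epsilon K$ (the quantity $u - v^2/2$ is approximately conserved in the ODE limit), but this comes from the ODE structure, not from $h$. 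You therefore need, for $v_n\ge 0$, a separate comparison with the simplified system $a_{n+1}=a_n(1+b_{n+1})$, $b_{n+1}=b_n+a_n$ and its Euler-scheme/$\tan$ solution, exactly as the paper does in Step~2 (Lemma~\ref{lem:simplification} and Fact~\ref{fct:atExplosion}); the $\hat w_n$-tracking argument is valid only for $v_n\le 0$. Once the ascent phase is handled this way (and the descent either by your $\hat w_n$ argument or by the paper's duality $\check\Psi(x)=1/\Psi(-x)$), the Riemann-sum-to-integral step and the arctangent evaluation go through.
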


The last statement particularly highlights the universal constant $\lim_{v\to 0-} \frac{{\mathbb C}_{v}}{\log \Psi(\infty)}=\pi \sqrt{2}$. It is also worth noting the partial symmetry breaking around the critical point, the function $v \in (-\infty, 0] \mapsto {\mathbb C}_v$ being discontinuous at $0$. With similar methods as the ones we develop here, one can show that for $v > 0$,  $F(\epsilon,v) = \epsilon^{\gamma_v + o(1)}$ as $\epsilon \to 0$, where $\gamma_v:= \frac{\log \Psi(\infty)}{\log \Psi(v)}\ge 1$.
These results are in agreement with the findings in \cite{HMP} for the differential system associated to the solvable continuous-time DR model.

Finally, we present a result describing the behavior of the recursion along the critical curve, in agreement with previous results on DR models (see \cite{bmvxyz_conjecture_DR,xyz_sustainability,xz_stable}).

\begin{theorem}
\label{thm:criticalEvolution}
Assuming \eqref{ass:psi}, let $v<0$ and $(u_{0},v_{0})=(h(v),v)$. Then
$$ u_{n}\,\sim\,\frac{2}{n^2} \quad \text{and}\quad v_{n}\,\sim\, -\frac{2}{n}\quad\text{as }n\to\infty. $$
\end{theorem}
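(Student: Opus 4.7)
The key observation is that the critical curve $\mathcal{C}=\{(u,v):u=h(v)\}$ is invariant under the recursion \eqref{eqn:recursiveEquationReducedB}. Indeed, starting from $(u_{0},v_{0})=(h(v),v)$ with $v<0$, the functional equation \eqref{eqn:functionalEquationCriticalCurve} gives
$$ u_{1}\,=\,h(v)\,\Psi\bigl(v+h(v)\bigr)\,=\,h\bigl(v+h(v)\bigr)\,=\,h(v_{1}), $$
and the bound $0\le h(v)\le -v$ from Theorem~\ref{thm:criticalCurve} ensures $v_{1}=v+h(v)\in[v,0]$. An immediate induction then yields $(u_{n},v_{n})=(h(v_{n}),v_{n})$ with $v_{n}\in[v,0]$ for all $n\ge 0$, reducing the two-dimensional dynamics to the scalar recursion $v_{n+1}=v_{n}+h(v_{n})$.

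Next, I would show that $v_{n}\uparrow 0$. The sequence $(v_{n})$ is nondecreasing (since $u_{n}\ge 0$) and bounded above by $0$, hence converges to some $v_{\infty}\le 0$. If $v_{\infty}<0$, continuity of $h$ and the identity $v_{n+1}-v_{n}=h(v_{n})$ force $h(v_{\infty})=0$; but the monotonicity of $h$ together with $h(x)\sim x^{2}/2$ as $x\uparrow 0$ implies $h>0$ on $(-\infty,0)$, a contradiction. Thus $v_{n}\uparrow 0$.

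With this in hand, set $w_{n}:=-v_{n}\downarrow 0$, so that $w_{n+1}=w_{n}-h(-w_{n})$. The asymptotic \eqref{h-asymp} yields $h(-w_{n})\sim w_{n}^{2}/2$, and hence $w_{n+1}/w_{n}\to 1$. Telescoping the reciprocals,
$$ \frac{1}{w_{n+1}}-\frac{1}{w_{n}}\,=\,\frac{h(-w_{n})}{w_{n}w_{n+1}}\,\longrightarrow\,\frac{1}{2}. $$
A Stolz--Ces\`aro averaging then delivers $1/w_{n}\sim n/2$, i.e., $v_{n}\sim -2/n$. Substituting back into $u_{n}=h(v_{n})\sim v_{n}^{2}/2$ produces $u_{n}\sim 2/n^{2}$, as claimed.

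I do not anticipate a serious obstacle here. The only step not stated explicitly in Theorem~\ref{thm:criticalCurve} is the strict positivity of $h$ on $(-\infty,0)$, which however follows immediately from its monotonicity combined with $h(x)\sim x^{2}/2$. The heart of the argument is the familiar scalar recursion $w_{n+1}=w_{n}-\tfrac{1}{2}w_{n}^{2}(1+o(1))$, whose $2/n$ decay is morally the Euler discretization of the ODE $w'=-w^{2}/2$.
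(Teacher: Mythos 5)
Your proposal is correct and takes essentially the same route as the paper: the paper quotes Proposition~\ref{p:g} for the invariance $u_n=h(v_n)$ and the convergence $v_n\uparrow 0$ (both of which it establishes there by exactly your arguments via the functional equation and the contradiction with \eqref{eqn:limits}), and then derives $v_n\sim -2/n$ by bracketing $h$ between $w_1 x^2$ and $w_2 x^2$ with $w_1<1/2<w_2$ rather than your exact Stolz--Ces\`aro computation of $\lim(1/w_{n+1}-1/w_n)=1/2$ — a cosmetic, not substantive, difference.
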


We conclude this introduction with some comments. The generalized DR model \eqref{eqn:derridaRetauxGeneralized} in the case ${\tt Z}=1$ with a general initial distribution $\nu$ was studied in \cite{yz_bnyz}. In that work, the asymptotic behavior of the free energy was examined in a domain corresponding, in our normalized coordinates, to $u_{0}\to 0$ and $v_{0}>0$. With specific assumptions on the tail of $\nu$, different asymptotic behaviors from \eqref{conjectureDR} can emerge at $v_{0}=0$.

The  model \eqref{eqn:derridaRetauxGeneralized} with ${\tt Z}=1$, where the initial distribution is a mixture of a Dirac measure at 0 and either a geometric or exponential distribution, has been studied in Li and Zhang \cite{lz-asymp, lz-scaling}. These works classify different regimes and provide precise estimates for the distribution of $X_{n}$ in the critical regime; in this special case (${\tt Z}=1$), Theorem 1.4 is consistent with \cite[Theorem 1.3]{lz-asymp}, where the authors further obtain a higher-order expansion. In addition, \cite{lz-scaling} explores the scaling limit, which leads to a continuous-time model.

Previous studies of the stochastic recursions \eqref{iteration:m=2} or \eqref{iteration:m} have heavily relied on the explicit computation of the critical point $p_c$ as derived in Collet et al.~\cite{collet-eckmann-glaser-martin}. This computation requires two key assumptions: that $\nu_{0}$ is supported on $\Z_+$, and that the subtraction term ${\tt Z}$ equals $1$ almost surely. For instance, if the constant 1 in \eqref{iteration:m=2} or \eqref{iteration:m} is replaced by 2, even the computation of the critical value becomes an open problem; See \cite{KotLot24} for some estimates on the critical value.

Our method, however, proves to be robust. By reducing the study of the law of $X_{n}$ to that of the recursive equation \eqref{eqn:recursiveEquationReducedB}, we can establish universality without needing to know the explicit value of the critical point. The critical point is characterized by the function $h$, which is the solution of the involution-type equation \eqref{eqn:functionalEquationCriticalCurve}. Specifically, as long as the critical curve $h$ satisfies $h(x) \sim x^2/2$ as $x\to 0$, the free energy defined for the recursive equation \eqref{eqn:recursiveEquationReducedB} will satisfy the corresponding Derrida-Retaux conjecture.

Finally, we mention studies of the Derrida-Retaux system in other contexts: as a spin glass model in in Collet et al.~\cite{collet-eckmann-glaser-martin2}, as an iteration function of random variables in Li and Rogers~\cite{li-rogers} and Jordan \cite{jordan}, and as part of the max-type recursion families in the seminal paper by Aldous and Bandyopadhyay~\cite{aldous-bandyopadhyay}.

The rest of the article is organized as follows. In the next section, we introduce two families of two-parameter DR models, whose evolution is governed by equation \eqref{eqn:recursiveEquationReducedB}. Specifically, we discuss the implications of Theorems~\ref{thm:main} and~\ref{thm:criticalEvolution} for these processes. Section~\ref{sec:basics} explores some basic properties of the recursion \eqref{eqn:recursiveEquationReducedB}, including a proof of Proposition~\ref{prop:freeEnergy} and an analysis of the backward evolution of this equation. In Section~\ref{sec:criticalCurve}, we investigate the critical domain, proving Theorems~\ref{thm:criticalCurve} and~\ref{thm:criticalEvolution}. Finally, we establish Theorem~\ref{thm:main} in Section~\ref{sec:DRconj}, thereby confirming the Derrida-Retaux conjecture for \eqref{eqn:recursiveEquationReducedB}.

\paragraph{Notation}
Throughout, we use the following standard notation: $\N$ for the set of~positive integers, $\Z_+$ for the set of nonnegative integers, $\R_+$ for the set of nonnegative reals and $(0,\infty)$ the set of positive reals. We also recall that $x_+=\max(x,0)$ is the positive part of $x$, and we write $x_-=(-x)_+$ for the negative part of $x$.

\section{Solvable discrete-time DR models with two parameters}
\label{sec:stochastique}

In this section, we introduce two families of two-parameters DR models. As mentioned in the introduction, these models can be thought of as stochastic recursions on a Galton-Watson tree with a geometric offspring distribution. Using several equalities in distribution involving sums of a geometric number of i.i.d. random variables, we are able to describe the families of laws by tracking two parameters that evolve according to \eqref{eqn:recursiveEquationReducedB}.

Recall \eqref{eqn:derridaRetauxGeneralized}: For $n\in\N$, writing $(X_{n}^{(k)}, k\ge 1)$ for i.i.d. random variables of law $\nu_{n}$, independent of $(\mathtt{R},\mathtt{Z})$, then
$$
  X_{n+1}=\left( \sum_{j=1}^{\mathtt{R}} X_{n}^{(k)} -\mathtt{Z}\right)_+ \text{ is a random variable of law $\nu_{n+1}$.}
$$

We introduce in Section~\ref{subsec:drDiscret} a generalized DR model such that $\nu_{n}$ is a probability distribution on $\Z_+$ for all $n\in\N$, then in Section~\ref{subsec:drContinu} a generalized DR model such that $\nu_{n}{}_{|(0,\infty)}$ has density with respect to the Lebesgue measure on $(0,\infty)$. We relate these two models to the recursion equation \eqref{eqn:recursiveEquationReducedB}, which allows us to apply Theorems~\ref{thm:main} and~\ref{thm:criticalEvolution}.

\subsection{Generalized DR model on {$\Z_+$} with linear fractional input}
\label{subsec:drDiscret}

We assume in this section that $\mathtt{Z}$ takes values in $\N$, the set of positive integers. In this case, if $\nu_{0}$ is supported   on $\Z_+$, then $\nu_{n}$ will be supported   on $\Z_+$ for all $n\ge 1$. A solvable DR model will be obtained by choosing for the input distribution $\nu_{0}$ a {\it linear fractional distribution} that we now introduce, see also \cite{Alsmeyer:21} for further details about this class of distributions that is of particular interest in the theory of branching processes.

\begin{definition}[Linear fractional distribution]
\label{def-LF}
Let $\alpha, \beta>0$ be such that $\alpha+\beta \ge 1$ and $Y$ be a random variable. We say that $Y$ has a linear fractional distribution with para\-meters $\alpha, \beta$ and write $Y \eqdist \mathsf{LF}(\alpha, \beta)$ if $Y$ takes values in $\Z_+$ and has probability generating function $f_{Y}(s):=\E(s^{Y}) $ which satisfies
$$ \frac{1}{1-f_{Y}(s)}=\frac{\alpha}{1-s}+\beta , \quad \text{i.e.}\quad f_{Y}(s)=1- \frac{1-s}{\alpha+\beta(1-s)} $$
for $0\le s\le 1$. In particular, $\E(Y)= \alpha^{-1}$.
\end{definition}

In this section, we consider the generalized DR model \eqref{eqn:derridaRetauxGeneralized} with geometrically distributed $\tt R$ and initial distribution $\nu_{0}=\mathsf{LF}(\alpha,\beta)$, where $\alpha$ and $\beta$ are as specified. We demonstrate that, for each $n\in\N$, the distribution $\nu_{n}$ remains linear fractional, i.e.~$\nu_{n}=\mathsf{LF}(\alpha_{n},\beta_{n})$ for appropriate values of  $(\alpha_{n},\beta_{n})$ that will be further specified below. Additionally, we show that, up to a reparametrization, the evolution of this sequence can be mapped to the recursion \eqref{eqn:recursiveEquationReducedB} for a suitable function $\Psi$.

If $Y \eqdist \mathsf{LF}(\alpha, \beta)$, then we infer from Definition~\ref{def-LF} that
$$ \P(Y=k)=\frac{\alpha}{(\alpha+\beta)^2} \left( \frac{\beta}{\alpha+\beta} \right)^{k-1}\text{ for $k\ge 1$ and}\quad \P(Y=0)=1-\frac{1}{\alpha+\beta}. $$
The geometric distribution appears as a special case of linear fractional distribution, namely $\mathcal{G}(p)=\mathsf{LF}(p,1-p)$ for all $p \in (0,1)$.  Two well-known distributional identities for linear fractional distributions are next.

\begin{fact}
\label{f:1}
Let $\alpha, \beta>0$ be such that $\alpha+\beta\ge 1$ and ${\tt R} \eqdist \mathcal{G}(p)$ for some $p\in (0,1)$. If $Y_1,Y_{2},\ldots$ are i.i.d.~random variables with common law $\mathsf{LF}(\alpha, \beta)$ and independent of ${\tt R}$, then
$$ \sum_{j=1}^{\tt R} Y_{j}\ \eqdist\ \mathsf{LF}(p \alpha, 1-p+p \beta). $$
\end{fact}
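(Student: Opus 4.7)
The cleanest approach is via probability generating functions, exploiting the characterization of $\mathsf{LF}(\alpha,\beta)$ through the simple form of $1/(1-f_Y(s))$ given in Definition~\ref{def-LF}. I will work at the level of generating functions only, since two distributions on $\Z_+$ agreeing on their pgf on $[0,1]$ are equal.

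Writing $S := \sum_{j=1}^{\tt R} Y_j$ and using independence together with the standard identity for random sums, I have $f_S(s) = \E(s^S) = f_{\tt R}(f_Y(s))$ for $s\in[0,1]$. Observing that $\mathcal{G}(p) = \mathsf{LF}(p,1-p)$, both $f_{\tt R}$ and $f_Y$ are of the form given in Definition~\ref{def-LF}, so it is natural to compute $1/(1-f_S(s))$ rather than $f_S(s)$ itself.

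The key manipulation is then to write
\[
\frac{1}{1-f_S(s)}\ =\ \frac{1}{1-f_{\tt R}(f_Y(s))}\ =\ \frac{p}{1-f_Y(s)} + (1-p),
\]
by applying the defining identity of Definition~\ref{def-LF} to $f_{\tt R}$ with argument $f_Y(s)$ in place of $s$. A second application of the same identity, now to $f_Y$ itself, gives $1/(1-f_Y(s)) = \alpha/(1-s) + \beta$, and substituting this in yields
\[
\frac{1}{1-f_S(s)}\ =\ \frac{p\alpha}{1-s} + \bigl(1-p+p\beta\bigr).
\]
By Definition~\ref{def-LF} this characterizes $\mathsf{LF}(p\alpha, 1-p+p\beta)$, provided the admissibility condition $p\alpha + (1-p+p\beta) \ge 1$ holds, which is immediate since it rearranges to $p(\alpha+\beta - 1)\ge 0$ and $\alpha+\beta\ge 1$ by assumption.

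There is essentially no obstacle in this argument: once one recognizes that the identity $1/(1-f_Y) = \alpha/(1-s)+\beta$ is the "right" coordinate in which the linear fractional family is closed under the operations of composition and geometric random summation, the computation is a one-line substitution. The only point worth double-checking is the admissibility of the resulting parameters, which is handled by the observation above.
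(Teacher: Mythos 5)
Your proof is correct and follows exactly the same route as the paper: compute the pgf of the random sum via composition, pass to $1/(1-f_S(s))$, and apply the linear fractional identity twice. The only addition is your explicit check of the admissibility condition $p\alpha+(1-p+p\beta)\ge 1$, which the paper leaves implicit.
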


\begin{proof} Denote by $f_{\tt R}, f_{Y}$ and $f_\Sigma$ the probability generating functions of ${\tt R}, Y_1$ and $\sum_{j=1}^{\tt R} Y_{j}$ respectively. Then $f_{\Sigma}(s)= f_{\tt R}(f_{Y}(s))$ for $0\le s \le 1$ and therefore
\begin{align*}
\frac1{1-f_\Sigma(s)}\ =\ \frac1{1- f_{\tt R}(f_{Y}(s))}\ &=\ 1-p+\frac{p}{1- f_{Y}(s)}\\
&=\  1-p+p \left( \beta+\frac{\alpha}{1-s}\right)\ =\ (1-p+p \beta)+\frac{p \alpha}{1-s},
\end{align*}
which completes the proof.
\end{proof}

\begin{fact}\label{f:2}
If $Y,{\tt Z}$ are independent integer-valued random variables such that $Y\eqdist \mathsf{LF}(\alpha, \beta)$, then
$$ (Y-{\tt Z} )_+\ \eqdist\ \mathsf{LF}\left(\frac{\alpha}{\varphi(\beta/(\alpha+\beta))},\frac{\beta}{\varphi(\beta/(\alpha+\beta))}\right), $$
where $\varphi(s)=\E(s^{\tt Z} )$ for $0\le s \le 1$.
\end{fact}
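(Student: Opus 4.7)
The plan is to compute the law of $V := (Y - \mathtt{Z})_+$ directly from the explicit probability mass function of $Y$, and then match the result with $\mathsf{LF}(\alpha/\varphi(r),\beta/\varphi(r))$, where I abbreviate $r := \beta/(\alpha+\beta)$ and $\varphi := \varphi(r)$. The crucial structural observation is that, from Definition~\ref{def-LF}, one has $\P(Y = k) = \frac{\alpha}{(\alpha+\beta)^2} r^{k-1}$ for $k \geq 1$, so the positive atoms of $Y$ form a pure geometric tail; shifting by an independent nonnegative integer $\mathtt{Z}$ and discarding the negative part rescales but does not distort this geometric shape, which is exactly what will force the resulting distribution to remain linear fractional.

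First I would condition on $\mathtt{Z}$: for each $k \geq 1$, using the geometric form of the mass function and independence,
\begin{equation*}
\P(V = k)\ =\ \sum_{z \geq 0} \P(\mathtt{Z} = z)\, \P(Y = k+z)\ =\ \frac{\alpha}{(\alpha+\beta)^2}\, r^{k-1} \sum_{z \geq 0} \P(\mathtt{Z} = z)\, r^{z}\ =\ \frac{\alpha\, \varphi}{(\alpha+\beta)^2}\, r^{k-1},
\end{equation*}
where the remaining sum is $\varphi(r) = \varphi$ by definition. The mass at $0$ is then recovered by summing the resulting geometric series and taking the complement, giving $\P(V = 0) = 1 - \varphi/(\alpha+\beta)$.

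To conclude, I would recognize these two expressions as the mass function of $\mathsf{LF}(\alpha',\beta')$ with $\alpha' := \alpha/\varphi$ and $\beta' := \beta/\varphi$. The admissibility condition $\alpha'+\beta' = (\alpha+\beta)/\varphi \geq 1$ holds because $\varphi \leq 1$ and $\alpha+\beta \geq 1$. Since $\beta'/(\alpha'+\beta') = r$ and $\alpha'/(\alpha'+\beta')^2 = \alpha\varphi/(\alpha+\beta)^2$, a direct substitution into the mass formula from Definition~\ref{def-LF} reproduces exactly the two probabilities computed above, completing the identification. I do not anticipate any serious obstacle; the argument is essentially bookkeeping, the only mild points being the interchange of summations (justified by positivity) and the sanity check on the admissibility of the new parameters.
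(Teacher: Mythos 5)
Your proof is correct and follows essentially the same route as the paper's: both exploit the fact that $Y$ restricted to positive values has a pure geometric profile with ratio $r=\beta/(\alpha+\beta)$, so conditioning on $\mathtt{Z}$ just multiplies that profile by $\varphi(r)$. The paper works with the tail probabilities $\P(Y\geq k)=p_0\lambda^{k-1}$ rather than the point masses, which makes the positive-part operation slightly more transparent, but the computation is otherwise the same; your extra sanity check of the admissibility condition $\alpha'+\beta'\geq 1$ is a harmless bonus.
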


\begin{proof}
Since $\P(Y\geq k)=p_{0} \lambda^{k-1}$ for all $k\ge 1$, where $\lambda=\frac{\beta}{\alpha+\beta}$ and $p_{0}=\frac{1}{\alpha+\beta}$, it follows that
$$ \P((Y-{\tt Z} )_+\ge k)\ =\ \E\left( p_{0} \lambda^{k-1+{\tt Z} } \right)\ =\ p_{0} \lambda^{k-1} \varphi(\lambda). $$
and thus $(Y-{\tt Z} )_+\eqdist\mathsf{LF}(\gamma,\delta)$ with
$$ \frac{\delta}{\gamma+\delta}\ =\ \lambda\ =\ \frac{\beta}{\alpha+\beta}\quad\text{and}\quad \frac{1}{\gamma+\delta}\ =\ p_{0} \varphi(\lambda)\ =\ \frac{1}{\alpha+\beta}\,\varphi\left(\frac{\beta}{\alpha+\beta}\right). $$
The asserted values of $\gamma$ and $\delta$ are now easily obtained by simple algebra.
\end{proof}

A combination of the two facts now enables us to show that the generalized DR model $(\nu_{n})_{n\ge 0}$ with linear fractional input can be explicitly described by the evolution of a two-dimensional recursive sequence.

\begin{lemma}\label{p:pqab}
Let $\nu_{0}=\mathsf{LF}(\alpha, \beta)$ for some $\alpha, \beta>0$ with $\alpha+\beta\ge 1$ and ${\tt R} \eqdist \mathcal{G}(p)$ for some $p\in (0,1)$. Then $\nu_{n}= \mathsf{LF}(\alpha_{n}, \beta_{n})$ for any $n\ge 0$, where the $(\alpha_{n},\beta_{n})$ are given by the recursion $(\alpha_{0},\beta_{0})=(\alpha, \beta)$ and
\begin{equation}\label{eqn:pqab}
(\alpha_{n+1},\beta_{n+1})\,=\,\left(\frac{p\alpha_{n}}{d_{n}},\frac{1-p+p\beta_{n}}{d_{n}}\right)\quad
\text{with}\quad d_{n}\,=\,\varphi\left(\frac{1-p+p \beta_{n}}{1-p+p (\alpha_{n}+\beta_{n})}\right).
\end{equation}
\end{lemma}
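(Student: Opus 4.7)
The proof will proceed by induction on $n$, with the inductive step being an immediate chaining of Fact~\ref{f:1} and Fact~\ref{f:2}. The base case $n=0$ is trivial since $\nu_{0}=\mathsf{LF}(\alpha_{0},\beta_{0})$ by hypothesis.

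For the inductive step, suppose $\nu_{n}=\mathsf{LF}(\alpha_{n},\beta_{n})$ with $\alpha_{n},\beta_{n}>0$ and $\alpha_{n}+\beta_{n}\ge 1$. The random variable $X_{n+1}$ is defined in two stages: first form the geometric sum $\Sigma_{n}:=\sum_{j=1}^{\mathtt{R}} X_{n}^{(j)}$, then set $X_{n+1}=(\Sigma_{n}-\mathtt{Z})_{+}$. By Fact~\ref{f:1} applied with $Y_{j}=X_{n}^{(j)}$, the intermediate variable $\Sigma_{n}$ is $\mathsf{LF}(p\alpha_{n},1-p+p\beta_{n})$. Then I apply Fact~\ref{f:2} with $(\alpha,\beta)=(p\alpha_{n},1-p+p\beta_{n})$: the relevant ratio in the factor $\varphi$ is
$$\frac{\beta}{\alpha+\beta}\ =\ \frac{1-p+p\beta_{n}}{p\alpha_{n}+1-p+p\beta_{n}}\ =\ \frac{1-p+p\beta_{n}}{1-p+p(\alpha_{n}+\beta_{n})},$$
which is exactly the argument of $\varphi$ appearing in $d_{n}$. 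Fact~\ref{f:2} then yields $X_{n+1}\eqdist\mathsf{LF}(p\alpha_{n}/d_{n},\,(1-p+p\beta_{n})/d_{n})$, that is $\nu_{n+1}=\mathsf{LF}(\alpha_{n+1},\beta_{n+1})$ as asserted.

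To close the induction I still need to verify that the hypotheses of Definition~\ref{def-LF} are preserved, namely $\alpha_{n+1},\beta_{n+1}>0$ and $\alpha_{n+1}+\beta_{n+1}\ge 1$. Positivity of $\alpha_{n+1}$ and $\beta_{n+1}$ is clear because $p\alpha_{n}>0$, $1-p+p\beta_{n}>0$ and $d_{n}>0$ (as $\varphi$ takes positive values on $(0,1]$ and its argument lies in $(0,1)$ since $\alpha_{n}>0$). For the sum condition, I observe that
$$\alpha_{n+1}+\beta_{n+1}\ =\ \frac{1-p+p(\alpha_{n}+\beta_{n})}{d_{n}}\ \ge\ \frac{1}{d_{n}}\ \ge\ 1,$$
where the first inequality uses $\alpha_{n}+\beta_{n}\ge 1$ and the second uses $d_{n}=\varphi(\,\cdot\,)\le 1$ on $[0,1]$.

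This is essentially a bookkeeping argument once Facts~\ref{f:1} and~\ref{f:2} are in hand, so I do not expect any real obstacle; the only point requiring attention is the check that the linear-fractional class is stable under the two operations involved, which is exactly what Facts~\ref{f:1} and~\ref{f:2} supply, together with the elementary monotonicity inequality $\varphi\le 1$ needed to propagate $\alpha_{n}+\beta_{n}\ge 1$.
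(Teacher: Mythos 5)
Your proof is correct and matches the paper's argument exactly: the paper also obtains the result as a direct consequence of chaining Fact~\ref{f:1} (geometric sum) with Fact~\ref{f:2} (subtraction of $\mathtt{Z}$), simply stating this in one line. Your additional verification that the invariants $\alpha_{n}+\beta_{n}\ge 1$ and positivity are preserved (via $\varphi\le 1$ on $[0,1]$) is a worthwhile bookkeeping detail that the paper leaves implicit.
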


\begin{proof}
This is a direct consequence of the Facts~\ref{f:1} and~\ref{f:2}.
\end{proof}

To transform the recursive equation \eqref{eqn:pqab} into \eqref{eqn:recursiveEquationReducedB}, which is the next step, we introduce the function
\begin{equation}\label{psi(x)}
\psi(x)\ :=\ \frac{1}{p}\,\varphi\left(\frac{x}{x+1}\right),\quad x\ge 0.
\end{equation}
which is nonnegative, increasing, $\mathcal{C}^{\infty}$ and bounded, with
\[ \psi(0)\ =\ \phi(0)/p\ =\ 0\ \text{ and }\ \lim_{x\to \infty} \psi(x)\ =\ \phi(1)/p\ =\ 1/p,\]
using that $\P(\mathtt{Z} = 0) = 0$.
In particular, there exists a unique $\xi>0$  such that
\begin{equation*}
\psi(\xi)=1.
\end{equation*}
We use this parameter to renormalize the recursion satisfied by $(\alpha_{n},\beta_{n})$, transforming it into the form given in \eqref{eqn:recursiveEquationReducedB}.

\begin{proposition}
\label{prop:reductionToRecursion1}
Let $(\alpha_{n},\beta_{n})_{n\ge 0}$ be a sequence satisfying \eqref{eqn:pqab}. For all $n\in \N$, we define the following reparametrization:
\begin{equation}\label{eqn:reparametrization}
u_{n}\,:=\,\psi'(\xi) \frac{1-p}{p \alpha_{n}}\quad\text{and}\quad v_{n}\,:=\,\psi'(\xi) \left(\frac{\beta_{n}}{\alpha_{n}}-\xi\right).
\end{equation}
Then $(u_{n},v_{n})_{n\ge 0}$ satisfies the recursion in \eqref{eqn:recursiveEquationReducedB}, with $\Psi$ defined by
\begin{equation}\label{Psi-1}
 \Psi(x)\,:=\,\psi\left( \frac{x}{\psi'(\xi)}+\xi\right)\quad\text{for } x\in I\,:=\,(- \psi'(\xi) \xi, \infty).
\end{equation}
\end{proposition}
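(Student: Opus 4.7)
The proposition is a direct verification that the prescribed change of variables $(\alpha_n,\beta_n) \leftrightarrow (u_n,v_n)$ conjugates the recursion \eqref{eqn:pqab} into the form \eqref{eqn:recursiveEquationReducedB}. My strategy is simply to compute $v_{n+1}$ and $u_{n+1}$ from \eqref{eqn:reparametrization} applied at step $n+1$, substituting \eqref{eqn:pqab}, and match each expression to the target recursion.

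First I will treat the $v$-coordinate, which is the easier half. From \eqref{eqn:pqab}, the common factor $d_n$ cancels in the ratio $\beta_{n+1}/\alpha_{n+1}$, giving
\[
\frac{\beta_{n+1}}{\alpha_{n+1}} \;=\; \frac{1-p+p\beta_n}{p\alpha_n} \;=\; \frac{1-p}{p\alpha_n} + \frac{\beta_n}{\alpha_n}.
\]
Multiplying by $\psi'(\xi)$ and subtracting $\psi'(\xi)\xi$, the definitions in \eqref{eqn:reparametrization} immediately yield $v_{n+1} = u_n + v_n$, matching the second line of \eqref{eqn:recursiveEquationReducedB}.

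Next I will handle the $u$-coordinate. The relation $\alpha_{n+1} = p\alpha_n/d_n$ gives at once
\[
u_{n+1} \;=\; \psi'(\xi)\frac{(1-p)d_n}{p \cdot p\alpha_n} \;=\; u_n \cdot \frac{d_n}{p},
\]
so the only remaining step is to identify $d_n/p$ with $\Psi(v_{n+1})$. The identity obtained in the first step gives $v_{n+1}/\psi'(\xi) + \xi = \beta_{n+1}/\alpha_{n+1} = (1-p+p\beta_n)/(p\alpha_n)$; then the definition \eqref{Psi-1} of $\Psi$ together with the definition \eqref{psi(x)} of $\psi$ yields
\[
\Psi(v_{n+1}) \;=\; \frac{1}{p}\varphi\!\left(\frac{1-p+p\beta_n}{1-p+p(\alpha_n+\beta_n)}\right) \;=\; \frac{d_n}{p},
\]
which is exactly what is required.

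Finally I will check that $\Psi$ fits the framework of the paper. By construction $\Psi(0) = \psi(\xi) = 1$ (definition of $\xi$), and by the chain rule $\Psi'(0) = \psi'(\xi)/\psi'(\xi) = 1$; this is precisely the reason the scaling factor $\psi'(\xi)$ appears in \eqref{eqn:reparametrization}. Boundedness, monotonicity and $\mathcal{C}^2$-regularity of $\Psi$ on $I$ are inherited from the corresponding properties of $\psi$, and positivity of $\alpha_n,\beta_n$ ensures $v_{n+1}/\psi'(\xi)+\xi > 0$, so $v_{n+1}\in I$ throughout the iteration and $\Psi(v_{n+1})$ is always well-defined. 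There is no genuine obstacle in the argument: the only conceptual content is recognizing that the specific scalings $\psi'(\xi)$ and $\xi$ chosen in \eqref{eqn:reparametrization} are dictated precisely by the normalization $\Psi(0) = \Psi'(0) = 1$.
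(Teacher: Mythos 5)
Your proof is correct and follows essentially the same approach as the paper: both are direct verifications that the stated change of variables conjugates \eqref{eqn:pqab} into \eqref{eqn:recursiveEquationReducedB}, with the key identification $d_n/p = \Psi(v_{n+1})$ obtained by unwinding the definitions of $\psi$, $\Psi$, and $d_n$. The only cosmetic difference is that the paper introduces intermediate variables $(x_n, y_n) = (\beta_n/\alpha_n,\ (1-p)/(p\alpha_n))$ before rescaling, whereas you compute directly with $(u_n, v_n)$.
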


Straightforward calculations provide that $\Psi(0)=\Psi'(0)=1$ (by construction) and also $\Psi(\infty)=p^{-1}$. Moreover, for all $\delta>0$, the restriction of $\Psi$ to the interval $[-\psi'(\xi)\xi+\delta,\infty)$ can be extended to a function that satisfies \eqref{ass:psi}. Since $(v_{n})_{n\ge 0}$ is nondecreasing, we can therefore apply general results on solutions of \eqref{eqn:recursiveEquationReducedB} to this generalized DR model.

\begin{proof}
Introducing the parameterization
$$ (x_{n},y_{n})\,:=\,\left(\frac{\beta_{n}}{\alpha_{n}},\frac{1-p}{p \alpha_{n}}\right)\quad\text{for }n\ge 0, $$
it follows that $x_{n+1}= x_{n}+y_{n}$ and
$$ \frac{1-p+p (\alpha_{n}+\beta_{n})}{1-p +p\beta_{n}}\ =\ 1+ \frac{ p \alpha_{n} }{1-p +p\beta_{n}}\ =\ 1+ \frac{1}{y_{n}+x_{n}}\ =\ 1+ \frac{1}{x_{n+1}} $$
for each $n\ge 0$. From this, we obtain
$$ \frac{y_{n+1}}{y_{n}}\ =\ \frac{\alpha_{n}}{a_{n+1}}\ =\ \frac1{p}\,\varphi\bigg(\frac{x_{n+1}}{1+x_{n+1}}\bigg) $$
and then the recursive equation
\begin{align*}
(x_{n+1}, y_{n+1})\ =\ (x_{n}+y_{n} ,   y_{n} \psi(x_{n+1})),
\end{align*}
valid for all $n\ge 0$.
Finally, the proof is concluded by noting that $v_{n}=\psi'(\xi)(x_{n}-\xi)$ and $u_{n}=\psi'(\xi) y_{n}$.
\end{proof}

Observe that the free energy of the generalized DR model with linear fractional input, starting from $\nu_{0}=\mathsf{LF}(\alpha, \beta)$, is defined by
\begin{align*}
  F_\mathsf{LF}(\alpha, \beta)\ &=\ \lim_{n\to\infty} \frac{p^n}{\alpha_{n}}\ =\ \frac{p}{(1-p) \psi'(\xi)}  \lim_{n\to\infty} \Psi(\infty)^n u_{n}\\
  &=\ \frac{p}{(1-p)\psi'(\xi)} F(u_0,v_0),
\end{align*}
with the notation of \eqref{eqn:freeEnergyb}. As a consequence, Theorems~\ref{thm:criticalCurve} and~\ref{thm:main} can be translated as follows.
\begin{theorem}\label{cor:h-LF}
Assuming $\E(\log \mathrm{Z})<\infty$, the following assertions hold:

\begin{enumerate}[(a)]
\item  There exists a unique nonincreasing, Lipschitz continuous function $h$ which satisfies
$$ h(x+h(x))\ =\ \Psi(x+h(x)) h(x)\quad\text{for all }x \in (-\psi'(\xi)\xi,0], $$
and $h(x) \sim x^2/2$ as $x\uparrow 0$. Furthermore, for any $\alpha, \beta>0$ with  $\alpha+\beta\ge 1$,
$$ h\left( \psi'(\xi)\left( \frac{\beta}{\alpha}-\xi \right)\right)\ <\ \psi'(\xi)\frac{1-p}{p\alpha}\quad\iff\quad F_\mathsf{LF}(\alpha, \beta)\,>\,0. $$

\item For $\alpha, \beta>0$ such that $\alpha +\beta\ge 1$ and $\beta/\alpha\le \xi$, let
$$ \gamma_{*}\, :=\, \inf\{\gamma \in (0, \alpha+\beta] : F_\mathsf{LF}(\alpha/\gamma,\beta/\gamma)>0\}. $$
Then
\begin{equation}
h\left( \psi'(\xi)\left( \frac{\beta}{\alpha}-\xi \right)\right)\ <\ \psi'(\xi)\frac{1-p}{p\alpha}(\alpha+\beta), \label{hyp-LF}\end{equation}
implies
$$ \gamma_{*}\ =\ \frac{ p\,\alpha \,h\left( \psi'(\xi)\left(\frac{\beta}{\alpha}-\xi \right)\right)}{\psi'(\xi) (1-p)}\ \in\ [0,\alpha+\beta), $$
and there exists $C_{\alpha,\beta}>0$ such that
$$ \lim_{\gamma \downarrow \gamma_{*}} (\gamma-\gamma_{*})^{1/2} \log F_\mathsf{LF}(\alpha/\gamma,\beta/\gamma)\ =\ -C_{\alpha,\beta}. $$
\end{enumerate}
\end{theorem}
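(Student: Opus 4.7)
The proof of Theorem \ref{cor:h-LF} reduces both parts to Theorems \ref{thm:criticalCurve} and \ref{thm:main} and Proposition \ref{prop:freeEnergy} via the reparametrization provided by Proposition \ref{prop:reductionToRecursion1}, once it is verified that the function $\Psi$ defined by \eqref{Psi-1} meets the assumptions \eqref{ass:psi} and \eqref{ass:technical}. For part (a), the construction of $\xi$ ensures that $\Psi$ is $\mathcal{C}^2$, nondecreasing and bounded on $(-\psi'(\xi)\xi,\infty)$ with $\Psi(0)=\Psi'(0)=1$ and $\Psi(\infty)=p^{-1}$; extending $\Psi$ smoothly to $\R$ in any way compatible with \eqref{ass:psi} allows Theorem \ref{thm:criticalCurve} to produce a unique nonincreasing Lipschitz $h$ solving the functional equation, with $h(x)\sim x^2/2$ as $x\uparrow 0$. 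Because $h(x)\le (-x)_+$ forces $x+h(x)\in[x,0]\subset(-\psi'(\xi)\xi,0]$, the equation involves only arguments in this admissible window, so the restriction of $h$ to $(-\psi'(\xi)\xi,0]$ is independent of the extension chosen. The identity $F_\mathsf{LF}(\alpha,\beta)=\frac{p}{(1-p)\psi'(\xi)}\,F(u_0,v_0)$ read off from \eqref{eqn:reparametrization}, combined with Proposition \ref{prop:freeEnergy}, then yields the equivalence $F_\mathsf{LF}(\alpha,\beta)>0\Longleftrightarrow h(v_0)<u_0$, provided \eqref{ass:technical} holds.

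\textbf{Verification of \eqref{ass:technical}.} This is the only genuinely analytic step. For $\kappa>1$, set $s_j:=(\kappa^j/\psi'(\xi)+\xi)/(\kappa^j/\psi'(\xi)+\xi+1)$, so that $\Psi(\infty)-\Psi(\kappa^j)=p^{-1}\E(1-s_j^{\mathtt{Z}})$ and $1-s_j\asymp\kappa^{-j}$. Using the elementary bound $1-s^k\le\min(1,k(1-s))$ and Fubini's theorem,
\begin{equation*}
\sum_{j\ge 1}\E\bigl(1-s_j^{\mathtt{Z}}\bigr)\ \le\ \E\sum_{j\ge 1}\min\bigl(1,C\mathtt{Z}\kappa^{-j}\bigr)\ =\ O\bigl(1+\E(\log\mathtt{Z})\bigr)\ <\ \infty,
\end{equation*}
since $\mathtt{Z}\ge 1$ is $\N$-valued and $\E(\log\mathtt{Z})<\infty$ by hypothesis. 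This is where the assumption of the theorem is used, and it is the main (essentially only) obstacle of the proof.

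\textbf{Part (b).} Scaling $(\alpha,\beta)\mapsto(\alpha/\gamma,\beta/\gamma)$ leaves $v_0=\psi'(\xi)(\beta/\alpha-\xi)\le 0$ unchanged, while $u_0(\gamma)=c_\alpha\gamma$ with $c_\alpha:=\psi'(\xi)(1-p)/(p\alpha)$. Part (a) then gives $F_\mathsf{LF}(\alpha/\gamma,\beta/\gamma)>0\Longleftrightarrow u_0(\gamma)>h(v_0)$, hence $\gamma_*=h(v_0)/c_\alpha=p\alpha h(v_0)/(\psi'(\xi)(1-p))$, and \eqref{hyp-LF} is exactly the statement $\gamma_*<\alpha+\beta$. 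Writing $\epsilon:=u_0(\gamma)-h(v_0)=c_\alpha(\gamma-\gamma_*)$, the relation $\log F_\mathsf{LF}(\alpha/\gamma,\beta/\gamma)=\log F(h(v_0)+\epsilon,v_0)+O(1)$, together with Theorem \ref{thm:main} applied at $v=v_0$, yields
\begin{equation*}
\lim_{\gamma\downarrow\gamma_*}(\gamma-\gamma_*)^{1/2}\log F_\mathsf{LF}(\alpha/\gamma,\beta/\gamma)\ =\ -\frac{C_{v_0}}{\sqrt{c_\alpha}}\ =:\ -C_{\alpha,\beta}.
\end{equation*}
Beyond the integrability check giving \eqref{ass:technical}, every step above is a routine change of variables.
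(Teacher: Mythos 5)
Your proposal is correct and follows the same reduction to Theorems~\ref{thm:criticalCurve} and \ref{thm:main} and Proposition~\ref{prop:freeEnergy} via the reparametrization of Proposition~\ref{prop:reductionToRecursion1}, with part (b) handled by exactly the change of variables the paper uses: $v_0$ is $\gamma$-independent while $u_0$ is linear in $\gamma$, so the critical $\gamma_*$ and the asymptotic constant $C_{\alpha,\beta}=C_{v_0}/\sqrt{c_\alpha}$ drop out. The one place you deviate in substance is the verification of condition \eqref{ass:technical}. The paper disposes of this in a single sentence by appealing to ``a standard Tauberian argument'' relating $\E(\log\mathtt{Z})<\infty$ to the behavior of $\varphi$ near $1$, without spelling out the estimate. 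You instead give a self-contained elementary argument: unwinding \eqref{psi(x)} and \eqref{Psi-1} to write $\Psi(\infty)-\Psi(\kappa^j)=p^{-1}\E(1-s_j^{\mathtt{Z}})$ with $1-s_j\asymp\kappa^{-j}$, then using $1-s^k\le\min(1,k(1-s))$ and Fubini to bound $\sum_j\E(1-s_j^{\mathtt{Z}})$ by a constant times $1+\E(\log\mathtt{Z})$, which is exactly where the hypothesis enters and which is valid since $\mathtt{Z}\ge 1$ here. This is cleaner than the paper's hand-wave and avoids any appeal to Tauberian theory. Your remark that $h(x)\le(-x)_+$ keeps the argument of the functional equation inside $(-\psi'(\xi)\xi,0]$, so the restriction of $h$ does not depend on the chosen extension of $\Psi$, is also a useful observation that the paper leaves implicit in the compatibility step of the proof of Theorem~\ref{thm:criticalCurve}.
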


Note that if \eqref{hyp-LF} fails, then $F_\mathsf{LF}(\alpha/\gamma,\beta/\gamma)=0$ for any $\gamma\in (0, \alpha+\beta]$.

\begin{proof}
By a standard Tauberian argument, $\E(\log\mathtt{Z})<\infty$ can be used to obtain asymptotic estimates of the probability generating function $\varphi$ of $\mathtt{Z}$ about $1$. In particular, it implies that condition \eqref{ass:technical} holds (see \eqref{psi(x)} and \eqref{Psi-1} for the connection between $\varphi$ and the function $\Psi$ appearing in this condition). We can therefore apply Theorem~\ref{thm:criticalCurve} and Proposition~\ref{prop:freeEnergy} to infer (a). For part (b), we apply Theorem~\ref{thm:main}, observing that with our notation, we have
$$ u_{0}\,=\,\psi'(\xi)\frac{1-p}{p\alpha}\, \gamma \quad \text{and} \quad v_{0}\,=\,\psi'(\xi)\left(\frac{\beta}{\alpha}-\xi\right), $$
i.e., $u_{0}$ varies with $\gamma$. The assertions now follow by immediate translation.
\end{proof}

The following result about the critical regime follows in a similar manner by a translation of Theorem~\ref{thm:criticalEvolution}.

\begin{corollary}\label{cor:LF-critical}
Let $\alpha, \beta>0$ be such that $\alpha +\beta\ge 1$ and $\beta/\alpha\le \xi$, and assume \eqref{hyp-LF}.  Let $(X_{n})_{n\ge 0}$ be a sequence of random variables such that $X_{n}$ has law $\nu_{n}$ for each $n$, where $\nu_{0}= \mathsf{LF}(\alpha/\gamma_{*},\beta/\gamma_{*})$. Then
$$ \P(X_{n}\ge 1)\ \sim\ \frac{2p}{(1-p) \psi'(\xi)(1+\xi)} \frac1{n^2},\quad\text{as }n\to\infty $$
and $X_{n}$, conditioned on $\{X_{n}\ge 1\}$, converges in law to $\mathcal{G}(\frac{1}{1+\xi})=\mathsf{LF}(\frac1{1+\xi}, \frac{\xi}{1+\xi})$.
\end{corollary}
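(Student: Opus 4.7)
The plan is to translate the statement into the reparametrized variables $(u_n, v_n)$ of \eqref{eqn:recursiveEquationReducedB} and apply Theorem~\ref{thm:criticalEvolution} directly. First I would invoke Lemma~\ref{p:pqab} to write $\nu_n = \mathsf{LF}(\alpha_n, \beta_n)$ for all $n\geq 0$, with initial values $(\alpha_0, \beta_0) = (\alpha/\gamma_*, \beta/\gamma_*)$. The crucial preliminary step is to verify that under the reparametrization \eqref{eqn:reparametrization} this initial condition lands exactly on the critical curve, i.e.\ that $(u_0, v_0) = (h(v_0), v_0)$ with $v_0 = \psi'(\xi)(\beta/\alpha - \xi) < 0$. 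This is a tautological check: inserting the formula $\gamma_* = p\alpha\, h(v_0)/(\psi'(\xi)(1-p))$ from Theorem~\ref{cor:h-LF}(b) into $u_0 = \psi'(\xi)(1-p)\gamma_*/(p\alpha)$ yields $u_0 = h(v_0)$, while $v_0 < 0$ is forced by the hypothesis~\eqref{hyp-LF}, which makes $h(v_0)$ strictly positive.

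Once the critical starting position is identified, Theorem~\ref{thm:criticalEvolution} gives $u_n \sim 2/n^2$ and $v_n \sim -2/n$ as $n\to\infty$. Inverting \eqref{eqn:reparametrization} yields
\[
\alpha_n\,=\,\frac{\psi'(\xi)(1-p)}{p\, u_n}\quad\text{and}\quad\frac{\beta_n}{\alpha_n}\,=\,\xi + \frac{v_n}{\psi'(\xi)},
\]
so that
\[
\alpha_n + \beta_n\,=\,\alpha_n\left(1 + \xi + \frac{v_n}{\psi'(\xi)}\right)\,\sim\,\frac{\psi'(\xi)(1-p)(1+\xi)}{2p}\,n^2.
\]
Since $\P(X_n \geq 1) = 1/(\alpha_n+\beta_n)$ by Definition~\ref{def-LF}, the announced first asymptotic follows immediately.

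For the conditional limit, I would use the explicit mass function of $\mathsf{LF}(\alpha_n, \beta_n)$ to write, for each $k\geq 1$,
\[
\P(X_n = k \mid X_n \geq 1)\,=\,\frac{\alpha_n}{\alpha_n + \beta_n}\left(\frac{\beta_n}{\alpha_n + \beta_n}\right)^{k-1},
\]
and observe that $\beta_n/\alpha_n = \xi + v_n/\psi'(\xi) \to \xi$ forces $\alpha_n/(\alpha_n+\beta_n) \to 1/(1+\xi)$ and $\beta_n/(\alpha_n+\beta_n) \to \xi/(1+\xi)$. Pointwise convergence of the mass functions on $\N$ then gives convergence in law to $\mathcal{G}(1/(1+\xi))$. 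The whole argument is essentially a dictionary between the two parametrizations; I do not anticipate any serious obstacle beyond the bookkeeping involved in unfolding \eqref{eqn:reparametrization}, since all the analytical heavy lifting is already packaged into Theorem~\ref{thm:criticalEvolution}.
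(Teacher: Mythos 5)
Your proposal is correct and matches the route the paper itself indicates (the paper gives no explicit proof, only the remark that the corollary ``follows in a similar manner by a translation of Theorem~\ref{thm:criticalEvolution}''): translate via \eqref{eqn:reparametrization}, note that $(u_0,v_0)$ lands on the critical curve, apply Theorem~\ref{thm:criticalEvolution}, and unfold the linear-fractional mass function. The only imprecision is the justification of $v_0<0$: condition \eqref{hyp-LF} alone does not force $h(v_0)>0$ (it is satisfied vacuously when $v_0=0$); rather, $v_0<0$ must hold strictly for $\gamma_*>0$, hence for $\nu_0=\mathsf{LF}(\alpha/\gamma_*,\beta/\gamma_*)$ to be well defined at all. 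This is a harmless bookkeeping slip and does not affect the substance of the argument.
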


\subsection{Generalized DR model on {$\R_+$} with continuous linear fractional input}
\label{subsec:drContinu}

The \textit{solvable} generalized DR model on $\R_+$, which will be studied in this section, can be viewed as the continuous counterpart to the model with linear fractional input. It is based on the assumption that $\mathtt{Z}$ is a generic $\R_+$-valued random variable, with $\nu_{0}$ taken as a mixture of an exponential distribution and a Dirac mass at 0. This mixture is referred to as a continuous linear fractional distribution in \cite{AlsmeyerHung:25}, as it corresponds to a continuous measure with a linear fractional Laplace transform (as opposed to the probability generating function used in the discrete case). For the sake of symmetry with the previous section, let us define these random variables formally.

\begin{definition}[Continuous linear fractional distributions]
Let $\lambda>0$, $\varrho \in [0,1]$ and $X$ be a random variable. We say that $X$ has a continuous linear fractional distribution with parameters $\lambda,\varrho$ and write $X\eqdist\mathsf{CLF}(\lambda,\varrho)$ if $X$ takes values in $\R_+$ and
\begin{equation}\label{eqn:tdf}
\P(X>x)\,=\,\varrho e^{-\lambda x}\quad\text{for all }x>0.
\end{equation}
This implies $\P(X=0)=1-\varrho$, $\E(X)=\varrho/\lambda$, and
\begin{equation}\label{eqn:lapTransform}
\E\left( e^{-\mu X} \right)\,=\,1-\varrho+\frac{\varrho \lambda}{\lambda+\mu}\quad\text{for all }\mu>-\lambda.
\end{equation}

\end{definition}

Note that continuous linear fractional distributions actually form the unique two-parameter family of probability measures on $\R_+$ whose Laplace transforms are linear fractional. Therefore, it is not surprising that, similar to the integer-valued linear fractional distributions, these continuous distributions define a solvable family within the context of the generalized DR model.

We also remark that this family corresponds to the family of probability distributions introduced in \cite{HMP} for the continuous-time DR model. There does not seem to be a direct link between the continuous- and the discrete-time models. However, it has been proved in \cite{lz-asymp} that one can obtain the continuous-time DR model as a scaling limit of the discrete-time model.

Similar to the previous section, we begin by assembling a couple of facts about the laws $\mathsf{CLF}(\lambda,\varrho)$.

\begin{fact}
\label{f:1m}
Let $\varrho\in [0,1]$, $\lambda>0$, ${\tt R} \eqdist \mathcal{G}(p)$ for some $p\in (0,1)$ and $(X_{n},n\geq 1)$ be a sequence of i.i.d.~random variables with common law $\mathsf{CLF}(\lambda,\varrho)$ and independent of ${\tt R}$. Then
$$ \sum_{j=1}^{\tt R}X_{j}\ \eqdist\ \mathsf{CLF}\left(\frac{\lambda p}{p+(1-p)\varrho},\frac{\varrho}{p+(1-p)\varrho}\right). $$
\end{fact}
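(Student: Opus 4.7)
The plan is to follow exactly the template used for the proof of Fact~\ref{f:1}, but working with Laplace transforms on $\R_{+}$ in place of probability generating functions on $\Z_{+}$. Writing $\phi(\mu):=\mathbb{E}(e^{-\mu X_{1}})$ for the Laplace transform of $X_{1}$ and $f_{\mathtt{R}}$ for the probability generating function of $\mathtt{R}\eqdist \mathcal{G}(p)$, I would first observe, by conditioning on $\mathtt{R}$ and using independence, that the Laplace transform of $\Sigma:=\sum_{j=1}^{\mathtt{R}}X_{j}$ equals $f_{\mathtt{R}}\circ\phi$. Since the family $\mathsf{CLF}(\cdot,\cdot)$ is characterized by the Laplace transform formula \eqref{eqn:lapTransform}, the assertion will follow by exhibiting $f_{\mathtt{R}}(\phi(\mu))$ in the form $1-\varrho'+\varrho'\lambda'/(\lambda'+\mu)$ and reading off the parameters.

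For the algebra, the convenient move is to work with $1-\phi$ rather than $\phi$ itself. From \eqref{eqn:lapTransform} one has $1-\phi(\mu)=\varrho\mu/(\lambda+\mu)$. On the other hand, since $\mathcal{G}(p)=\mathsf{LF}(p,1-p)$, Definition~\ref{def-LF} gives
\begin{equation*}
\frac{1}{1-f_{\mathtt{R}}(s)}\ =\ (1-p)+\frac{p}{1-s},\quad 0\le s\le 1.
\end{equation*}
Plugging $s=\phi(\mu)$ into this identity, clearing denominators, and grouping terms in $\mu$ produces exactly the reciprocal of a continuous linear fractional Laplace transform, with the parameters $\lambda'=p\lambda/(p+(1-p)\varrho)$ and $\varrho'=\varrho/(p+(1-p)\varrho)$ announced in the statement. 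The admissibility check $\varrho'\in[0,1]$ reduces to the trivial inequality $\varrho\le p+(1-p)\varrho$, and $\lambda'>0$ is immediate.

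There is essentially no conceptual obstacle here; the main (and only) difficulty is bookkeeping in the elementary algebra. Once the functional form has been matched, the uniqueness of Laplace transforms on $\R_{+}$ yields the distributional identity $\Sigma\eqdist\mathsf{CLF}(\lambda',\varrho')$. Structurally the argument is the exact Laplace-transform counterpart of the computation carried out for Fact~\ref{f:1}, reflecting the fact that $\mathsf{CLF}$ distributions play the role under geometric random sums in the $\R_{+}$-valued setting that $\mathsf{LF}$ distributions play in the integer-valued one.
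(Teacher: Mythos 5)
Your proposal is correct and takes essentially the same approach as the paper, which simply remarks that the fact follows by computing the Laplace transform of the randomized sum and identifying the parameters via \eqref{eqn:lapTransform}. Your choice to carry out the identification by working with $1-\phi$ and the reciprocal identity for $\mathcal{G}(p)=\mathsf{LF}(p,1-p)$ mirrors the paper's own computation in the proof of Fact~\ref{f:1}, and the algebra checks out.
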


The proof of this fact follows by computing the Laplace transform of the randomized sum and its identification using \eqref{eqn:lapTransform}. The second fact to notice is the following counterpart of Fact~\ref{f:2}.

\begin{fact}
\label{f:2m} Let $\varrho\in [0, 1]$ and $\lambda>0$.
If $X\eqdist\mathsf{CLF}(\lambda,\varrho)$ and ${\tt Z} $ is an independent random variable taking values in $\R_+$ and with Laplace transform $\varphi$, then
$$ (X-{\tt Z} )_+\ \eqdist\ \mathsf{CLF}\left(\lambda,\varrho\,\varphi(\lambda)\right). $$
\end{fact}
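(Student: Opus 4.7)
The plan is to exploit the memoryless-type form of the tail function \eqref{eqn:tdf} and simply compute the survival function of $(X-\mathtt{Z})_+$ directly, in complete parallel with the discrete case treated in Fact~\ref{f:2}. The key observation is that the tail $\P(X>x)=\varrho e^{-\lambda x}$ factorizes multiplicatively in $x$, so shifting by an independent nonnegative $\mathtt{Z}$ only introduces a factor $\E(e^{-\lambda \mathtt{Z}})=\varphi(\lambda)$.

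Concretely, for every $x>0$ one writes
\[
\P\bigl((X-\mathtt{Z})_+>x\bigr)\ =\ \P(X-\mathtt{Z}>x)\ =\ \E\bigl(\P(X>x+\mathtt{Z}\mid\mathtt{Z})\bigr),
\]
where the independence of $X$ and $\mathtt{Z}$ is used in the last step. Since $\mathtt{Z}\ge 0$, the argument $x+\mathtt{Z}$ is positive a.s., hence \eqref{eqn:tdf} applies and yields $\P(X>x+\mathtt{Z}\mid\mathtt{Z})=\varrho e^{-\lambda(x+\mathtt{Z})}$. Taking expectations gives $\varrho\varphi(\lambda)e^{-\lambda x}$, which is exactly the tail of $\mathsf{CLF}(\lambda,\varrho\varphi(\lambda))$.

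Finally, one should check that $\varrho\varphi(\lambda)\in[0,1]$ so that the claimed distribution is well defined; this is immediate from $\varrho\in[0,1]$ and $\varphi(\lambda)=\E(e^{-\lambda\mathtt{Z}})\in[0,1]$ because $\mathtt{Z}\ge 0$ and $\lambda>0$. There is no real obstacle here: unlike Fact~\ref{f:2}, where the discretization forced one to read off both parameters by solving a small linear system, in the continuous setting the two parameters $(\lambda,\varrho)$ of $\mathsf{CLF}$ decouple neatly, $\lambda$ being preserved and only $\varrho$ scaling by $\varphi(\lambda)$. An alternative, essentially equivalent route would be to verify the identity via the Laplace transform \eqref{eqn:lapTransform}, but the direct tail computation is shorter and more transparent.
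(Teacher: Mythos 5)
Your computation is correct and is essentially the same one-line tail calculation the paper gives: condition on $\mathtt{Z}$, apply \eqref{eqn:tdf} to $\P(X>x+\mathtt{Z}\mid\mathtt{Z})$, and take expectations to pick up the factor $\varphi(\lambda)$. The extra remark that $\varrho\varphi(\lambda)\in[0,1]$ is a harmless (and sensible) sanity check not spelled out in the paper.
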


In view of \eqref{eqn:tdf}, it suffices to note that $\P((X-{\tt Z})_+>x)=\P(X>x+{\tt Z})=\rho\,\E(e^{-\lambda(x+{\tt Z})})$ $=\rho\,\varphi(\lambda)e^{-\lambda x}$ for all $x>0$.

As a consequence of these two facts, we directly infer that, if $\nu_{0} =\mathsf{CLF}(\lambda_{0},\varrho_{0})$, then the $\nu_{n}$ in the generalized DR model defined by \eqref{eqn:derridaRetauxGeneralized} are all continuous linear fractional distributions, namely
\begin{equation}\label{eqn:formulaLambdaRho}
\nu_{n} =\mathsf{CLF}(\lambda_{n},\varrho_{n}).
\end{equation}
for suitable recursively defined $(\lambda_{n}, \varrho_{n})\in (0,\infty) \times [0,1]$.
However, we need to introduce a new parametrization of $\mathsf{CLF}(\lambda,\varrho)$ such that the sequence $(\nu_{n})_{n\ge 0}$ under this new parametrization satisfies \eqref{eqn:recursiveEquationReducedB}.

Let us define the function
\begin{equation}
\gamma(\theta)\,:=\,\frac1{p} \E(e^{- {\tt Z}/\theta})\quad\text{for }\theta>0, \label{gamma}
\end{equation}
and $\tau>0$ as the unique number satisfying
\begin{equation}
\gamma(\tau)\,=\,1. \label{tau}
\end{equation}
Moreover, we let $\Psi$ be given by
\begin{equation}
\Psi(x)\,:=\, \gamma\left(\frac{x}{\gamma'(\tau)}+\tau\right)\quad\text{for }x \in I:= (-\tau \gamma'(\tau),\infty), \label{Psi-2}
\end{equation}
and note that this function satisfies $\Psi(0)=\Psi'(0)=1$.

\begin{proposition}
\label{prop:reductionToRecursion2}
Let $(\varrho_{n},\lambda_{n})_{n\ge 0}$ be the sequence determined by \eqref{eqn:formulaLambdaRho} and define
$$ u_{n}\,:=\,\gamma'(\tau)\,\frac{1-p}{p}\,\frac{\varrho_{n}}{\lambda_{n}}\quad\text{and} \quad v_{n}\,:=\,\gamma'(\tau)\left( \frac{1}{\lambda_{n}}-\tau \right)\quad\text{for }n\ge 0. $$
Then $(u_{n},v_{n})_{n\ge 0}$ satisfies \eqref{eqn:recursiveEquationReducedB} with the function $\Psi$ in \eqref{Psi-2} provided that $v_{0}>-\tau \gamma'(\tau)$.
\end{proposition}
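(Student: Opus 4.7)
The plan is to chain together Facts~\ref{f:1m} and~\ref{f:2m} to obtain an explicit recursion for $(\lambda_{n},\varrho_{n})$, then verify the two components of \eqref{eqn:recursiveEquationReducedB} by direct substitution into the reparametrization. No clever argument is needed; the proposition is an algebraic identity check, and the main content is arranging the algebra so that the function $\Psi$ in \eqref{Psi-2} appears naturally.

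First, I would combine the two facts. Given $\nu_{n}=\mathsf{CLF}(\lambda_{n},\varrho_{n})$ and a geometric number $\mathtt{R}$ of copies, Fact~\ref{f:1m} yields that $\sum_{j=1}^{\mathtt{R}}X_{n}^{(j)}\eqdist \mathsf{CLF}\!\left(\tfrac{\lambda_{n}p}{p+(1-p)\varrho_{n}},\tfrac{\varrho_{n}}{p+(1-p)\varrho_{n}}\right)$, and then Fact~\ref{f:2m} gives after subtracting $\mathtt{Z}$ and taking positive parts that
\begin{equation*}
\lambda_{n+1}=\frac{\lambda_{n}p}{p+(1-p)\varrho_{n}},\qquad \varrho_{n+1}=\frac{\varrho_{n}\,\varphi(\lambda_{n+1})}{p+(1-p)\varrho_{n}},
\end{equation*}
where $\varphi(\mu)=\E(e^{-\mu\mathtt{Z}})$ is the Laplace transform of $\mathtt{Z}$. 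This is the concrete recursion I will plug into the reparametrization.

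Next, for the $v$-component, I would compute
\begin{equation*}
\frac{1}{\lambda_{n+1}}=\frac{p+(1-p)\varrho_{n}}{\lambda_{n}p}=\frac{1}{\lambda_{n}}+\frac{1-p}{p}\,\frac{\varrho_{n}}{\lambda_{n}},
\end{equation*}
so multiplying by $\gamma'(\tau)$ and subtracting $\tau\gamma'(\tau)$ gives exactly $v_{n+1}=v_{n}+u_{n}$, which is the second coordinate of \eqref{eqn:recursiveEquationReducedB}. For the $u$-component, I use the recursion to get
\begin{equation*}
\frac{\varrho_{n+1}}{\lambda_{n+1}}=\frac{\varrho_{n}}{\lambda_{n}}\cdot\frac{\varphi(\lambda_{n+1})}{p},
\end{equation*}
and then recognize $\tfrac{1}{p}\varphi(\lambda_{n+1})=\gamma(1/\lambda_{n+1})$ from the definition \eqref{gamma}. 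Since $1/\lambda_{n+1}=v_{n+1}/\gamma'(\tau)+\tau$ by definition of $v_{n+1}$, and $\Psi$ is defined precisely by $\Psi(x)=\gamma(x/\gamma'(\tau)+\tau)$, this gives $\tfrac{1}{p}\varphi(\lambda_{n+1})=\Psi(v_{n+1})$, hence $u_{n+1}=u_{n}\Psi(v_{n+1})$ as required.

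Finally, I would note that the hypothesis $v_{0}>-\tau\gamma'(\tau)$ simply guarantees $1/\lambda_{0}>0$, i.e.~that the initial parameter lies in the natural domain on which the reparametrization is well-defined; an immediate induction using $v_{n+1}=v_{n}+u_{n}\ge v_{n}$ then shows $v_{n}>-\tau\gamma'(\tau)$ (equivalently $\lambda_{n}>0$) for all $n\ge 0$, so the argument of $\Psi$ remains in the interval $I$ throughout. The hardest part of this proof is essentially bookkeeping: ensuring that the ratios of the recursively defined $(\lambda_{n},\varrho_{n})$ are rearranged so that the key identity $\tfrac{1}{p}\varphi(\lambda_{n+1})=\Psi(v_{n+1})$ becomes visible. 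Once this is in place, both coordinates of \eqref{eqn:recursiveEquationReducedB} follow by a one-line substitution, and the proof is complete.
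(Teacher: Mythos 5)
Your proof is correct. The paper deliberately omits this proof, stating only that it involves "straightforward computations, similar to those for Proposition~\ref{prop:reductionToRecursion1}"; your computation supplies exactly those details in the natural way -- chaining Facts~\ref{f:1m} and~\ref{f:2m} to get the recursion for $(\lambda_n,\varrho_n)$, then verifying both coordinates of \eqref{eqn:recursiveEquationReducedB} by substitution and recognizing $\tfrac1p\varphi(\lambda_{n+1})=\gamma(1/\lambda_{n+1})=\Psi(v_{n+1})$. The remark about $v_0>-\tau\gamma'(\tau)$ and the induction keeping $v_n$ inside the domain $I$ of $\Psi$ is also the right closing observation.
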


We note that for all $\delta>0$, the restriction of $\Psi$ to $[-\psi'(\xi)\xi+\delta,\infty)$ can be extended to a bounded $\mathcal{C}^2$ function on $\R$. Using that $(v_{n})_{n\ge 0}$ is nondecreasing, we can once again apply general results on solutions to \eqref{eqn:recursiveEquationReducedB} to this generalized DR model. The proof of Proposition~\ref{prop:reductionToRecursion2} involves straightforward computations, similar to those for Proposition~\ref{prop:reductionToRecursion1}, and we therefore omit the details.

As in the previous subsection, we can compute the free energy of the generalized DR model starting from $\nu_{0} =\mathsf{CLF}(\lambda,\varrho)$, defined by
$$ F_{\mathsf{CLF}}(\lambda,\varrho)\ =\ \lim_{n\to\infty} \frac{p^n\varrho_{n}}{\lambda_{n}} \ =\ \frac{p}{(1-p)\gamma'(\tau)}  \lim_{n\to\infty} p^n u_{n}. $$
Therefore, Theorems~\ref{thm:criticalCurve} and~\ref{thm:main} can once again be applied to this model, yielding the following result.

\begin{theorem}\label{cor:h-exp}
Assuming $\E(\log \mathrm{Z})<\infty$, the following assertions hold:
\begin{enumerate}[(a)]
\item There exists a unique nontrivial function $h$ satisfying
$$ h(x+h(x))=\Psi(x+h(x)) h(x)\quad\text{for all }x \in (-\psi'(\xi)\xi,0]. $$
Furthermore, for any $\varrho \in [0,1]$ and $\lambda>0$,
$$ h\left( \gamma'(\tau)\left( \frac{1}{\lambda}-\tau \right) \right)\ <\ \gamma'(\tau)\,\frac{1-p}{p}\,\frac{\varrho}{\lambda}\quad\iff\quad F_{\mathsf{CLF}}(\lambda,\varrho)\,>\,0. $$
\item For $\lambda\ge1/\tau$, let
$$ \varrho_{\lambda}^{*}\,:=\,\inf \{\varrho \in (0,1]:F_\mathsf{CLF}(\lambda,\varrho)>0\}. $$
Then
\begin{equation}
\lambda\,p\,h\left( \gamma'(\tau)\left( \frac{1}{\lambda}-\tau \right) \right)\,<\,\gamma'(\tau) (1-p), \label{hyp-exp}
\end{equation}
implies
$$ \varrho_{\lambda}^{*}\ =\ \frac{\lambda p}{\gamma'(\tau) (1-p)}  h\left( \gamma'(\tau)\left( \frac{1}{\lambda}-\tau \right) \right), $$
and there exists $C_\lambda>0$ such that
$$ \lim_{\epsilon\to 0} \epsilon^{1/2} \log F(\varrho_{\lambda}^{*}+\epsilon,\lambda)\,=\,-C_\lambda. $$
\end{enumerate}
\end{theorem}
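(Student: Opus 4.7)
The plan is to mirror the proof of Theorem~\ref{cor:h-LF}, using the reduction to the abstract recursion \eqref{eqn:recursiveEquationReducedB} supplied by Proposition~\ref{prop:reductionToRecursion2}. The starting observation is that under the reparametrization $u_n = \gamma'(\tau)\frac{1-p}{p}\frac{\varrho_n}{\lambda_n}$ and $v_n = \gamma'(\tau)(\lambda_n^{-1}-\tau)$, the generalized CLF model evolves according to the two-dimensional system \eqref{eqn:recursiveEquationReducedB} with $\Psi$ as in \eqref{Psi-2}. Since $\Psi(\infty)=1/p$, the two notions of free energy are proportional:
\[
F_{\mathsf{CLF}}(\lambda,\varrho) \;=\; \frac{p}{(1-p)\gamma'(\tau)}\, F(u_0,v_0).
\]

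First I would verify that $\Psi$ fits the hypotheses of the earlier results. The function $\gamma(\theta)=p^{-1}\E(e^{-\mathtt{Z}/\theta})$ is smooth, nondecreasing, bounded, with limit $1/p$, and the normalization \eqref{tau}--\eqref{Psi-2} directly gives $\Psi(0)=\Psi'(0)=1$ and $\Psi(\infty)=1/p$. Although $\Psi$ is only naturally defined on $(-\tau\gamma'(\tau),\infty)$, the sequence $(v_n)$ is nondecreasing and $v_0>-\tau\gamma'(\tau)$, so $\Psi$ may be extended outside a compact neighbourhood of $[v_0,\infty)$ to a bounded, nondecreasing $\mathcal{C}^2$ function on $\R$ satisfying \eqref{ass:psi} without altering the trajectory. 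To obtain \eqref{ass:technical} from the hypothesis $\E(\log\mathtt{Z})<\infty$, I would invoke the standard Tauberian fact that the Laplace transform $\mu\mapsto\E(e^{-\mu\mathtt{Z}})$ then approaches $1$ faster than any negative power of $|\log\mu|$ as $\mu\downarrow 0$; unfolding definitions \eqref{gamma}--\eqref{Psi-2} converts this decay into the summability of $\Psi(\infty)-\Psi(\kappa^j)$ required by \eqref{ass:technical}, exactly as in the proof of Theorem~\ref{cor:h-LF}.

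Part~(a) is then an immediate translation: Theorem~\ref{thm:criticalCurve} furnishes the function $h$ with the claimed functional equation on $(-\tau\gamma'(\tau),0]$, while Proposition~\ref{prop:freeEnergy} yields $(u_0,v_0)\in\mathcal{P}\iff F(u_0,v_0)>0$, which read through the reparametrization is precisely the asserted equivalence for $F_{\mathsf{CLF}}$. For part~(b), the decisive observation is that at fixed $\lambda\geq 1/\tau$, the coordinate $v_0=\gamma'(\tau)(\lambda^{-1}-\tau)\leq 0$ is independent of $\varrho$, whereas $u_0=\gamma'(\tau)\frac{1-p}{p}\frac{\varrho}{\lambda}$ is linear in $\varrho$. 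By part~(a), the threshold $\varrho_\lambda^{*}$ is the unique $\varrho$ solving $u_0=h(v_0)$, giving the stated formula, and \eqref{hyp-exp} is exactly the requirement that this value lie in $(0,1]$. Writing
\[
u_0 \;=\; h(v_0) \;+\; \frac{\gamma'(\tau)(1-p)}{p\lambda}\bigl(\varrho-\varrho_\lambda^{*}\bigr)
\]
and applying Theorem~\ref{thm:main} at $v=v_0$, the $\epsilon^{1/2}$ scaling of $\log F(u_0,v_0)$ transfers, via the square root of this linear prefactor, to $\log F_{\mathsf{CLF}}$, producing $C_\lambda = C_{v_0}\sqrt{p\lambda/[\gamma'(\tau)(1-p)]}>0$.

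The only genuine work, exactly as in the LF case, is verifying \eqref{ass:technical} from $\E(\log \mathtt{Z})<\infty$; everything else is an algebraic translation of Theorems~\ref{thm:criticalCurve}, \ref{thm:main} and Proposition~\ref{prop:freeEnergy} through the explicit change of variables. I would therefore expect the proof to read as a rewriting of the argument for Theorem~\ref{cor:h-LF}, with probability generating functions systematically replaced by Laplace transforms.
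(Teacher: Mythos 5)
Your proposal takes essentially the same approach as the paper: reduce via Proposition~\ref{prop:reductionToRecursion2} to the abstract recursion \eqref{eqn:recursiveEquationReducedB}, verify \eqref{ass:psi} by extending $\Psi$ off $(-\tau\gamma'(\tau),\infty)$, use the hypothesis $\E(\log\mathtt Z)<\infty$ to obtain \eqref{ass:technical}, and then translate Theorem~\ref{thm:criticalCurve}, Proposition~\ref{prop:freeEnergy} and Theorem~\ref{thm:main} through the explicit change of variables --- exactly parallel to the proof of Theorem~\ref{cor:h-LF}, with Laplace transforms in place of generating functions. The paper does not even spell out this proof, simply noting that ``Theorems~\ref{thm:criticalCurve} and~\ref{thm:main} can once again be applied to this model,'' so your write-up fills in the algebraic details (the formula for $\varrho_\lambda^*$, the identification of \eqref{hyp-exp} with $\varrho_\lambda^*<1$, and the rescaling $C_\lambda=C_{v_0}\sqrt{p\lambda/[\gamma'(\tau)(1-p)]}$) correctly and in the intended way. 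One small caveat: your intermediate Tauberian claim that $\E(\log\mathtt Z)<\infty$ forces $1-\E(e^{-\mu\mathtt Z})$ to decay faster than \emph{any} negative power of $|\log\mu|$ is stronger than what actually holds; the correct (and sufficient) route is to bound $\Psi(\infty)-\Psi(\kappa^j)\le c\,\E\bigl(\min(1,\mathtt Z\kappa^{-j})\bigr)$ and sum in $j$ using Fubini, which gives $O(1+\E\log_+\mathtt Z)<\infty$ directly. Since the paper likewise only gestures at ``a standard Tauberian argument,'' this is a minor imprecision rather than a gap.
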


Note that if  \eqref{hyp-exp} fails, then $F_\mathsf{CLF}(\lambda,\varrho)=0$.

Once again, we can apply Theorem~\ref{thm:criticalEvolution} to obtain the following counterpart of  Corollary~\ref{cor:LF-critical} in the present situation when $\nu_{0}$ lies on the critical curve.

\begin{corollary}\label{cor:CLF-critical}
Let $\lambda\ge 1/\tau$ and assume \eqref{hyp-exp}.  Let $(X_{n})_{n\ge 0}$ be a sequence of random variables such that $X_{n}$ has law $\nu_{n}$ for each $n$, where $\nu_{0}=\mathsf{CLF}(\varrho_{\lambda}^{*},\lambda)$. Then $X_{n}$, conditioned on $\{X_{n}>0\}$, converges in law to the exponential distribution with parameter $1/\tau$, i.e., $\mathsf{CLF}(1/\tau,1)$.
\end{corollary}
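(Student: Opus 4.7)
The plan is a direct translation of Theorem~\ref{thm:criticalEvolution} through the parameterization of Proposition~\ref{prop:reductionToRecursion2}, combined with the explicit form of the conditional law of a $\mathsf{CLF}$-distributed variable given positivity.

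First I would check that the initial condition $(u_0,v_0)$ associated with $\nu_0=\mathsf{CLF}(\lambda,\varrho_\lambda^{*})$ lies on the critical curve $\mathcal{C}$. From the reparameterization we have $v_0=\gamma'(\tau)(1/\lambda-\tau)\le 0$ because $\lambda\ge 1/\tau$, and $u_0=\gamma'(\tau)\frac{1-p}{p}\frac{\varrho_\lambda^{*}}{\lambda}$. Substituting the explicit formula for $\varrho_\lambda^{*}$ supplied by Theorem~\ref{cor:h-exp}(b), these two expressions collapse to the identity $u_0=h(v_0)$, which by Theorem~\ref{thm:criticalCurve} places $(u_0,v_0)\in\mathcal{C}$.

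Next I would apply Theorem~\ref{thm:criticalEvolution} to obtain $u_n\sim 2/n^2$ and $v_n\sim-2/n$ as $n\to\infty$. Inverting the reparameterization gives
$$ \frac{1}{\lambda_n}\,=\,\tau+\frac{v_n}{\gamma'(\tau)}\,\longrightarrow\,\tau \qquad\text{and}\qquad \varrho_n\,=\,\frac{p\,\lambda_n u_n}{(1-p)\gamma'(\tau)}\,\longrightarrow\,0, $$
so in particular $\lambda_n\to 1/\tau$, while $\P(X_n>0)=\varrho_n$ vanishes at the polynomial rate $2p/((1-p)\gamma'(\tau)\tau\, n^2)$.

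Finally, by the tail formula \eqref{eqn:tdf}, for any $x>0$ and any $n$ large enough that $\varrho_n>0$,
$$ \P(X_n>x\mid X_n>0)\,=\,\frac{\varrho_n\,e^{-\lambda_n x}}{\varrho_n}\,=\,e^{-\lambda_n x}, $$
so the conditional distribution of $X_n$ given $\{X_n>0\}$ is \emph{exactly} the exponential distribution with parameter $\lambda_n$. Since $\lambda_n\to 1/\tau$, these conditional laws converge in distribution to the exponential distribution with parameter $1/\tau$, namely $\mathsf{CLF}(1/\tau,1)$. The whole argument is essentially bookkeeping; the substantive content is already packaged in Theorem~\ref{thm:criticalEvolution}, and the only point requiring care is the verification that $(u_0,v_0)$ lies on the critical curve, which follows at once from the explicit expression for $\varrho_\lambda^{*}$ in Theorem~\ref{cor:h-exp}(b).
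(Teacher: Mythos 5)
Your proof is correct and takes the same route the paper intends: observe that the formula for $\varrho_\lambda^{*}$ from Theorem~\ref{cor:h-exp}(b) makes $(u_0,v_0)$ land exactly on the critical curve, invoke Theorem~\ref{thm:criticalEvolution} to get $v_n\to 0$ (hence $\lambda_n\to 1/\tau$), and read off the conditional law from the explicit tail formula \eqref{eqn:tdf}. The paper states this corollary as an immediate translation of Theorem~\ref{thm:criticalEvolution}, and your write-up is precisely that bookkeeping, with the additional (correct but unrequested) computation of the rate at which $\varrho_n=\P(X_n>0)$ vanishes.
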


\section{Simple properties of the Derrida-Retaux recursion}
\label{sec:basics}

In this section, we present some straightforward properties of the solutions to the recursive equation \eqref{eqn:recursiveEquationReducedB}. Let $\Psi$ be a bounded, nonnegative, nondecreasing $\mathcal{C}^2$ function such that $\Psi(0)=\Psi'(0)=1$, i.e.\ that $\Psi$ satisfies \eqref{ass:psi}. We begin by proving \eqref{eqn:limits} through a characterization of the limits of a sequence $(u_{n},v_{n})_{n\ge 0}$ verifying \eqref{eqn:recursiveEquationReducedB}.

\begin{lemma}\label{lem:limits}
Given a  solution $(u_{n},v_{n})_{n\ge 0}$ to \eqref{eqn:recursiveEquationReducedB}, the following dichotomy holds: Either
$$ \lim_{n\to\infty} v_{n}\,=\,\infty\quad\text{and}\quad\lim_{n\to\infty}\frac{1}{n}\log u_{n}\,=\,\log \Psi(\infty)\,>\,0 $$
or
$$ \lim_{n\to\infty} v_{n}\,\leq\,0\quad\text{and}\quad\lim_{n\to\infty} u_{n}\,=\,0. $$
\end{lemma}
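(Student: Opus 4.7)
The plan is to split the analysis according to the behaviour of $v_\infty := \lim_n v_n$, which exists in $(v_0,\infty]$ because $v_{n+1}-v_n = u_n \geq 0$ makes $(v_n)$ nondecreasing. The key telescoping identity
$$
\sum_{n \geq 0} u_n \;=\; v_\infty - v_0
$$
links the two coordinates: whenever $v_\infty < \infty$, one immediately gets $u_n \to 0$.

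First I would rule out $v_\infty \in (0,\infty)$. One may assume $u_n > 0$ for all $n$, since otherwise $u_n = 0$ at some index locks the recursion and forces $v_\infty = v_n$, with $v_n \leq 0$ being the only case consistent with the desired dichotomy. Now if $v_\infty \in (0,\infty)$, then eventually $v_{n+1} \geq v_\infty/2 > 0$. A brief observation from \eqref{ass:psi} is that $\Psi$ is nondecreasing with $\Psi(0)=1$ and $\Psi'(0)=1 > 0$, which (via a Taylor expansion at $0$ combined with monotonicity) forces $\Psi(x) > 1$ strictly for every $x > 0$. In particular, $\Psi(v_\infty/2) =: 1+\delta > 1$, so $u_{n+1} \geq (1+\delta) u_n$ for all large $n$ and $(u_n)$ grows at least geometrically. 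This contradicts $u_n \to 0$ forced by the telescoping identity. Hence $v_\infty \in (-\infty,0] \cup \{\infty\}$.

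In the first branch $v_\infty \leq 0$, the convergence $u_n \to 0$ is immediate from the telescoping identity. In the second branch $v_\infty = \infty$, I would take logarithms in $u_{n+1} = u_n \Psi(v_{n+1})$: the increments $\log u_{n+1} - \log u_n = \log \Psi(v_{n+1})$ converge to $\log \Psi(\infty)$ since $\Psi$ is bounded and nondecreasing. A Cesàro–Stolz step then yields $\tfrac{1}{n}\log u_n \to \log\Psi(\infty)$. Positivity of this limit follows from the same strict inequality $\Psi(x) > 1$ for $x>0$: pick any $x_0$ with $v_{n_0} > x_0 > 0$, monotonicity gives $\Psi(\infty) \geq \Psi(x_0) > 1$.

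The main obstacle is minor but worth flagging: one must carefully justify the strict inequality $\Psi(x) > 1$ for every $x > 0$ (not merely $\geq 1$), which is the only place the normalization $\Psi'(0)=1$ plays an essential role in the dichotomy — both in excluding $v_\infty \in (0,\infty)$ and in ensuring exponential growth of $u_n$ on $\{v_\infty = \infty\}$.
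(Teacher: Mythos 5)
Your proof is correct and follows essentially the same path as the paper: both exploit the monotonicity of $(v_{n})$, the telescoping identity $v_{n}-v_{0}=\sum_{k<n}u_{k}$, and the Stolz--Ces\`aro lemma for the logarithmic growth rate of $u_{n}$. The only variation is in how the intermediate case $\lim_{n}v_{n}\in(0,\infty)$ is excluded: you derive a contradiction between $u_{n}\to 0$ and the exponential lower bound $u_{n+1}\ge\Psi(v_{\infty}/2)\,u_{n}$, whereas the paper observes that once $v_{n}>0$ for $n\ge N$, monotonicity of $\Psi$ with $\Psi(0)=1$ already gives $u_{n}\ge u_{N}>0$, so $v_{n}-v_{N}\ge(n-N)u_{N}\to\infty$ directly, a slightly shorter route that does not even need the strict inequality $\Psi>1$ at that step.
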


\begin{proof}
We remark that $v_{n+1}-v_{n}\,=\,u_{n}\ge 0$ for all $n\in\N$, thus $(v_{n})_{n\ge 0}$ is nondecreasing. As a result, $(v_{n})_{n\ge 0}$ either converges to a nonpositive limit, or all $v_{n}$ are nonnegative for sufficiently large $n$.

In the first situation, since $v_{n}$ converges to a finite limit, we have $u_{n}=v_{n+1}-v_{n}\to 0$ as $n\to\infty$, which is the desired conclusion.

Let us now assume that there exists $N\in\N$ such that $v_{n}>0$ for all $n \ge N$. In this case, we infer $u_{n+1}=u_{n}\Psi(v_{n+1})>u_{n}$ for all $n\ge N$, using that $\Psi'(0)=1$. Therefore, both $u_{n}$ and $v_{n}$ are strictly increasing for $n\ge N$ and, as a consequence, we have $v_{n}-v_{N}\ge (n-N)\,u_{N}$, which shows that $\lim_{n\to\infty} v_{n}=\infty$. With this, we conclude
$$ \lim_{n\to\infty} \frac{u_{n+1}}{u_{n}}\ =\ \lim_{n\to\infty} \Psi(v_{n+1})\ =\ \Psi(\infty) $$
and then
$$ \lim_{n\to\infty} \frac{1}{n} \log u_{n}\ =\ \lim_{n\to\infty} \frac{1}{n} \sum_{k=0}^{n-1}\log \frac{u_{k+1}}{u_{k}}\ =\ \log \Psi(\infty), $$ 
by an appeal to the Stolz-Ces\`aro lemma.
\end{proof}

Next, we prove that the free energy is well-defined and characterizes the supercritical domain when $\Psi$ satisfies \eqref{ass:technical}, thereby proving Proposition~\ref{prop:freeEnergy}.

\begin{proof}[Proof of Proposition~\ref{prop:freeEnergy}]
Recall that the free energy is defined as
$$ F(u_{0},v_{0})=\liminf_{n\to\infty} \Psi(\infty)^{-n} u_{n} $$
and note that, if $\lim_{n\to\infty} v_{n}\le  0$, then $F(u_{0},v_{0})=0$ by Lemma~\ref{lem:limits}. Therefore, we need only consider the case where $v_{n}\to \infty$.

The monotonicity of $\Psi$ implies
$$  \frac{u_{n+1}}{\Psi(\infty)^{n+1}}\ =\ \frac{u_{n}}{\Psi(\infty)^n} \frac{\Psi(v_{n+1})}{\Psi(\infty)}\ \le \ \frac{u_{n}}{\Psi(\infty)^n}, $$
and since $(\Psi(\infty)^{-n}u_{n})_{n\ge 0}$ is nonincreasing, we infer that $F(u_{0},v_{0})$ is well-defined as the limit of this sequence.

We now assume that, in addition, \eqref{ass:technical} holds and demonstrate that $F(u_{0},v_{0})>0$ whenever $v_{n}\to \infty$. By Lemma~\ref{lem:limits}, for any $1<\varrho<\Psi(\infty)$, we have $u_{n}>\varrho^n$ for all $n$ large enough. Thus, since $v_{n}=v_{0}+\sum_{i=0}^{n-1} u_i \geq v_0 + u_{n-1}$, we have
\[ \liminf_{n\to\infty} \varrho^{-n}v_{n}\ \ge\ \liminf_{n\to\infty}\varrho^{-n} u_{n-1}\ \ge\ \rho^{-1}. \]
Next, we use \eqref{eqn:recursiveEquationReducedB} to write
\begin{equation}
  \label{F:sum1}
  \frac{u_{n}}{\Psi(\infty)^n}\ =\ u_{0} \prod_{j=1}^n \frac{\Psi(v_{j})}{\Psi(\infty)}\ =\ u_{0} \exp\left( \sum_{j=1}^n \log \frac{\Psi(v_{j})}{\Psi(\infty)} \right)
\end{equation}
and then conclude from \eqref{ass:technical} that $\sum_{n\in\N} \log \frac{\Psi(v_{n})}{\Psi(\infty)}$ converges and thus that $u_{n}/\Psi(\infty)^n$ converges to a positive limit as $n\to\infty$.
\end{proof}

The following monotonicity lemma, which will frequently be used in our analysis, can be succinctly summarized by stating that \eqref{eqn:recursiveEquationReducedB} preserves the order when replacing $\Psi$ by a smaller or larger function.

\begin{lemma}
\label{lem:monotonicity}
Let $(u_{n},v_{n})_{n\ge 0}$ be a solution to \eqref{eqn:recursiveEquationReducedB} and $\underline{\Psi}, \bar{\Psi}$ be two nonnegative nondecreasing functions such that
$$ \underline{\Psi}(x)\,\le\,  \Psi(x)\,\le\,\bar{\Psi}(x)\quad\text{for all }x\in\R. $$
If $(\underline{u}_{n},\underline{v}_{n})_{n\ge 0}$ and $(\bar{u}_{n},\bar{v}_{n})_{n\ge 0}$ are solutions to \eqref{eqn:recursiveEquationReducedB} with $\Psi$ replaced by $\underline{\Psi}$ and $\bar{\Psi}$, respec\-tively, and
$0\le\underline{u}_{0}\le u_{0}\le\bar{u}_{0}$ and $\underline{v}_{0}\le v_{0}\le  \bar{v}_{0}$, then
\begin{equation}\label{eqn:monotony}
0\,\le\,\underline{u}_{n}\,\le\, u_{n}\,\le\,\bar{u}_{n} \quad \text{and}\quad\underline{v}_{n}\,\le\,   v_{n}\,\le\,\bar{v}_{n}
\end{equation}
for all $n\in\N$.
\end{lemma}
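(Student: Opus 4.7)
The plan is to prove the two chains of inequalities in \eqref{eqn:monotony} simultaneously by induction on $n$. The base case $n=0$ holds by hypothesis, together with the nonnegativity $\underline{u}_{0}\ge 0$, which will also need to be propagated so that the product $\underline{u}_{n}\,\underline{\Psi}(\underline{v}_{n+1})$ is well-behaved under the monotonicity argument. Since $\underline{\Psi}$ is nonnegative, a trivial sub-induction shows that $\underline{u}_{n}\ge 0$ for every $n$, and similarly for $u_{n}$ and $\bar{u}_{n}$.

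For the inductive step, I would treat the $v$-coordinate first, because \eqref{eqn:recursiveEquationReducedB} gives the simple additive update $v_{n+1}=u_{n}+v_{n}$, $\underline{v}_{n+1}=\underline{u}_{n}+\underline{v}_{n}$, $\bar{v}_{n+1}=\bar{u}_{n}+\bar{v}_{n}$. Combining the inductive hypothesis $\underline{u}_{n}\le u_{n}\le \bar{u}_{n}$ with $\underline{v}_{n}\le v_{n}\le \bar{v}_{n}$ immediately produces $\underline{v}_{n+1}\le v_{n+1}\le \bar{v}_{n+1}$.

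Next, for the $u$-coordinate, I would use the multiplicative update $u_{n+1}=u_{n}\Psi(v_{n+1})$. The key comparison is
$$\underline{u}_{n}\,\underline{\Psi}(\underline{v}_{n+1})\,\le\,\underline{u}_{n}\,\underline{\Psi}(v_{n+1})\,\le\,\underline{u}_{n}\,\Psi(v_{n+1})\,\le\,u_{n}\,\Psi(v_{n+1}),$$
where the first step uses the already-established $\underline{v}_{n+1}\le v_{n+1}$ together with the monotonicity of $\underline{\Psi}$, the second uses the pointwise bound $\underline{\Psi}\le\Psi$, and the third uses $0\le\underline{u}_{n}\le u_{n}$ together with $\Psi(v_{n+1})\ge 0$. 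An entirely symmetric three-step chain, invoking the monotonicity of $\bar{\Psi}$ and the bound $\Psi\le\bar{\Psi}$, produces $u_{n+1}\le \bar{u}_{n+1}$. This closes the induction.

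There is no genuine obstacle here; the only delicate point is keeping the factors in the right order when propagating the lower bound, since the inequality $\underline{\Psi}(\underline{v}_{n+1})\le\Psi(v_{n+1})$ requires \emph{both} the monotonicity of $\underline{\Psi}$ (to move from $\underline{v}_{n+1}$ to $v_{n+1}$) and the pointwise domination $\underline{\Psi}\le\Psi$, and then the factor $\underline{u}_{n}$ must be known to be nonnegative before one may multiply through. Writing the three-step chain as above makes these dependencies transparent, and the symmetric upper-bound argument is obtained by reversing the roles.
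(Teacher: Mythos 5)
Your proof is correct and follows the same simple-induction route the paper itself takes: propagate the $v$-ordering by additivity of $v_{n+1}=u_n+v_n$, then propagate the $u$-ordering by the three-step chain that uses monotonicity of $\underline{\Psi}$ (resp.\ $\bar{\Psi}$), the pointwise domination $\underline{\Psi}\le\Psi\le\bar{\Psi}$, and the nonnegativity of the $u$-factor; the paper merely compresses this into one sentence (``by summation'' for $v$ and ``immediate comparison'' for $u$, citing $\underline u_n\ge 0$ and $\underline\Psi\ge 0$). Your unpacking of the three-step chain and the explicit note that $\underline u_n\ge 0$ is needed to multiply through are exactly the hidden steps the paper omits, so this is the same argument spelled out in more detail.
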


Note that we do not require $v_0$ to be positive in the above lemma.

\begin{proof}
The proof follows by a simple induction. Assuming that \eqref{eqn:monotony} holds for some $n\in\N$, we obtain by summation
$$ \underline{v}_{n+1}\,\le\,v_{n+1}\,\le\,\bar{v}_{n+1}. $$
Then, by using that $\underline{u}_{n}$ and $\underline{\Psi}(v_{n+1})$ are nonnegative, we find by immediate comparison that
\[ \underline{u}_{n+1}\,\le \,u_{n+1}\,\le\,\bar{u}_{n+1}. \qedhere \]
\end{proof}

We now examine a duality relationship for the recursion equation \eqref{eqn:recursiveEquationReducedB}, which is based on time--reversal. Specifically, we show that the type of the recursion remains unchanged when time is reversed.

\begin{proposition}\label{prop:duality}
Let $(u_{n},v_{n})_{n\ge 0}$ be a sequence defined recursively by \eqref{eqn:recursiveEquationReducedB}. We fix $N\in\N$ and define
$$ \check{u}_{n}\,:=\,u_{N-n} \quad \text{and} \quad \check{v}_{n}\,:=\,-v_{N-n+1}\quad\text{for }0\le n\le N. $$
Then, for all $0\le n<N$, we have
\begin{equation}\label{eqn:backwardRecursiveEquation}
\begin{pmatrix} \check{u}_{n+1}\\ \check{v}_{n+1}\end{pmatrix}\ =\ \begin{pmatrix} \check{u}_{n}/\Psi(-\check{v}_{n+1})\\ \check{u}_{n}+\check{v}_{n}\end{pmatrix}.
\end{equation}
\end{proposition}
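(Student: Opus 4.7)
The proof is a direct algebraic verification: given the definitions $\check{u}_n = u_{N-n}$ and $\check{v}_n = -v_{N-n+1}$, both required identities in \eqref{eqn:backwardRecursiveEquation} reduce to rearrangements of the forward recursion \eqref{eqn:recursiveEquationReducedB}. There is no nontrivial analytic content, so my plan is simply to set up the substitution cleanly and check both coordinates.

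For the first coordinate, I would start from the forward rule $u_{k+1} = u_k\,\Psi(v_{k+1})$. Under assumption \eqref{ass:psi}, the function $\Psi$ is strictly positive, so I may divide to obtain $u_k = u_{k+1}/\Psi(v_{k+1})$. Specializing to $k = N-n-1$ and translating via the definitions gives
\[
\check{u}_{n+1}\,=\,u_{N-n-1}\,=\,\frac{u_{N-n}}{\Psi(v_{N-n})}\,=\,\frac{\check{u}_n}{\Psi(-\check{v}_{n+1})},
\]
since $-\check{v}_{n+1} = v_{N-n}$. This is the desired first identity.

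For the second coordinate, I would use the forward rule $v_{k+1} = u_k + v_k$. Taking $k = N-n$ gives $v_{N-n} = v_{N-n+1} - u_{N-n}$, and hence
\[
\check{v}_{n+1}\,=\,-v_{N-n}\,=\,u_{N-n} - v_{N-n+1}\,=\,\check{u}_n + \check{v}_n,
\]
which is the second identity. The argument is fully symmetric in the two coordinates and requires no induction beyond the fact that the forward recursion is invertible step-by-step because $\Psi>0$. The only place assumption \eqref{ass:psi} is used is to guarantee this positivity, which legitimizes the division by $\Psi(v_{N-n})$. There is no real obstacle; the only mild subtlety worth mentioning in the write-up is the shift in the index $N-n+1$ versus $N-n$ between $\check{v}_n$ and $\check{u}_n$, which is exactly what turns the additive step $v_{n+1}-v_n = u_n$ into the backward additive step $\check{v}_{n+1}-\check{v}_n = \check{u}_n - 2\check{v}_n$… no: into the stated $\check{v}_{n+1} = \check{u}_n+\check{v}_n$, so one should carefully check signs once, as done above.
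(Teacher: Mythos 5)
Your verification is correct and is exactly what the paper means by ``the proof follows by simple computations'': both displayed identities are obtained by inverting the forward recursion at the shifted indices $k=N-n-1$ and $k=N-n$, with positivity of $\Psi$ justifying the division. The only blemish is the garbled aside at the end about $\check{v}_{n+1}-\check{v}_n = \check{u}_n - 2\check{v}_n$, which you immediately retract; just delete that sentence in a final write-up.
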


We see from Proposition~\ref{prop:duality} that the backward evolution of $(u_{n},v_{n})_{n\ge 0}$ is a solution to \eqref{eqn:recursiveEquationReducedB} with $\Psi$ replaced by $\check{\Psi}(x)=1/\Psi(-x)$. We note that if $\Psi$ satisfies \eqref{ass:psi} and if $\lim_{x \to-\infty} \Psi(x)>0$, then $\check{\Psi}$ satisfies \eqref{ass:psi} as well and satisfies $\lim_{x\to -\infty}\check{\Psi}(x)>0$.

\begin{proof}
The proof follows by simple computations and can be omitted.
\end{proof}

\begin{remark}
We were not able to specify a law for the random variable ${\tt Z}$, neither discrete nor continuous, for which the time-reversal of the dynamics associated with the generalized Derrida--Retaux model can be interpreted as a dual Derrida--Retaux model. In particular, this time-reversal induces a slight mixing of time indices, associating $u_k$ with $v_{k+1}$, which partially explains this lack of interpretability. Time-reversal was a crucial tool in \cite{HMP} to describe the law of the critical tree associated with the Derrida--Retaux model; however, in that work the backward construction must be interpreted as a growth-fragmentation-type branching process.
\end{remark}

\section{Evolution along the critical line}
\label{sec:criticalCurve}

The main goal of this section is to describe the set $\mathcal{C}$, defined in \eqref{eqn:decompositionOfSpace}, as the set of initial conditions $(u_{0},v_{0})$ such that  $\lim_{n\to\infty} v_{n}=0$. As noted in Lemma~\ref{lem:limits}, the sequence $(v_{n})_{n\ge 0}$ either converges to a nonpositive limit or diverges to $\infty$. Moreover, by Lemma~\ref{lem:monotonicity}, we observe that the function $\phi_{v}$, which assigns to each $u \in\R_+$ the quantity $\lim_{n\to\infty} v_{n}$ with the initial conditions $(u_{0},v_{0})=(u,v)$,  is nondecreasing. The range of $\phi_{v}$ is $(-\infty,0] \cup \{\infty\}$.

We show that $\mathcal{C}$ is the graph of a continuous function, which can be described as follows:
\begin{equation}\label{h-general}
h : v \mapsto \inf\{u \in (0,\infty) : \phi_{v}(u)=\infty\}.
\end{equation}
Note that this definition of $h$ coincides with the one given in \eqref{eqn:defineCriticalCurve}.
Since $\phi_{v}$ is nondecreasing, we immediately deduce that $\lim_{n\to\infty} v_{n}=\infty$ if $u_0 < h(v_{0})$ and $\lim_{n\to\infty} v_{n}\le  0$ if $u_0 > h(v_{0})$. The behavior of the limit when $u_{0}=h(v_{0})$ remains unclear at this stage. In other words, we have:
\begin{equation}\label{eqn:firstObservation}
\{(u,v) : u<h(v)\}\subset\mathcal{C} \cup \mathcal{U} \quad \text{and} \quad \{(u,v) : u>h(v)\}\subset\mathcal{P},
\end{equation}
which is a first step toward proving \eqref{eqn:caracterisationOfPCU}. Finally, using Lemma~\ref{lem:monotonicity} again, we observe that $h$ is nonincreasing. Furthermore, since $(u_{n})_{n\ge 0}$ is nondecreasing when $v_{0}\ge 0$, we see that $h(x)=0$ for all $x\ge 0$.

The rest of the section is organized as follows. We prove in Subsection~\ref{subsec:hDefinition} that the function $h$ forms the only nontrivial solution to the functional equation \eqref{eqn:functionalEquationCriticalCurve} stated in Theorem~\ref{thm:criticalCurve}. This equation then allows us to identify the subcritical, critical and supercritical domains of the dynamics via \eqref{eqn:caracterisationOfPCU}. We then study the regularity of this function $h$ and finally, in Subsection~\ref{subsec:criticalEvolution}, the asymptotic behaviour of $(u_{n},v_{n})$ provided that $u_{0}=h(v_{0})$.

\subsection{Functional equation and analysis of the critical curve}
\label{subsec:hDefinition}

The main result of the section is the following proposition, which establishes the existence of a unique function $h$ satisfying \eqref{eqn:functionalEquationCriticalCurve}. Additionally, we show that the function $h$ corresponds to \eqref{h-general}.

\begin{proposition}
\label{p:g}
Let $A>0$ and $\Psi:[-A,\infty)\mapsto \R_+$ be a nondecreasing $\mathcal{C}^2$ function, such that $\Psi(0)=\Psi'(0)=1$. There exists a unique function  $g: [-A, \infty) \mapsto \R$  such that $g(x)=x$ for all $x\ge 0$, $g(x)>x$ for all $x\in [-A, 0)$ and
\begin{equation} \label{gxN}
g(g(x))\ =\ g(x)+\Psi(g(x)) (g(x)-x)\quad\text{for }x \in [-A, 0].
\end{equation}
Furthermore,
\begin{enumerate}[(i)]
\item the function $g$ is nondecreasing, $1$-Lipschitz -- that is, Lipschitz continuous  with Lip\-schitz constant 1 -- and satisfies
\begin{equation}\label{gtaylor}
g(x)- x\ \sim\ \frac{x^2}{2}\quad\text{as } x\uparrow 0,
\end{equation}
\item given a solution $(u_{n},v_{n})_{n\ge 0}$ to \eqref{eqn:recursiveEquationReducedB} with $v_0 \geq -A$, we have
\begin{equation}
\label{eqn:phaseSeparation}
\begin{cases}
\lim_{n\to\infty} v_{n}=\,\infty & \text{if } g(v_{0})-v_{0}\,>\,u_{0},\\
\lim_{n\to\infty} v_{n}=\,0 & \text{if } g(v_{0})-v_{0}\,=\,u_{0},\\
\lim_{n\to\infty} v_{n}<\,0 &\text{if } g(v_{0})-v_{0}\,<\,u_{0}.
\end{cases}
\end{equation}
\end{enumerate}
\end{proposition}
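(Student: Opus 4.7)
The plan is to construct $g$ directly from the dynamics by setting $g(v):=v+h(v)$ on $[-A,0]$, where $h(v):=\inf\{u\geq 0:\lim_{n\to\infty}v_n=\infty\text{ starting from }(u,v)\}$, extended by $g(x):=x$ on $[0,\infty)$. By Lemma~\ref{lem:monotonicity}, the set of supercritical initial $u$'s at fixed $v$ is an upward-closed interval, so $h$ is well-defined and nonincreasing; the bound $0\leq h(v)\leq -v$ on $[-A,0]$ comes from noting that $u\geq -v$ forces $v_1\geq 0$, after which Lemma~\ref{lem:limits} delivers $v_n\to\infty$. The strategy is to identify this $h$ with the one sought and derive \eqref{gxN} from the forward-invariance of the critical curve under the one-step map.

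For the functional equation, I would first upgrade $h$ to right-continuity directly from Lemma~\ref{lem:monotonicity} (equivalently, that the pinned region is open in $u$ at each fixed $v$). Then, starting from $(u_0,v_0)=(h(v_0),v_0)$ and using that the continuous one-step map preserves $\mathcal{P}$ and $\mathcal{U}$ separately, the image $(h(v_0)\Psi(v_0+h(v_0)),\,v_0+h(v_0))$ must again sit on the critical curve, which rewrites as \eqref{gxN} through $g=\mathrm{id}+h$. The $1$-Lipschitz property of $g$ splits into $g(y)-g(x)\leq y-x$ (equivalent to $h$ nonincreasing, already done) and $g(y)-g(x)\geq 0$, the latter obtained by the coupling trick that the shift $(u_0,v_0)\mapsto(u_0+\delta,v_0-\delta)$ leaves $v_1$ invariant. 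The asymptotic $g(x)-x\sim x^2/2$ follows by inserting $\phi(x):=h(x)$ into \eqref{gxN}, expanding $\Psi(y)=1+y+O(y^2)$ near $0$ to reach $\phi(x+\phi(x))-\phi(x)\sim(x+\phi(x))\phi(x)$, and running a Gronwall-type comparison along the forward orbit $g^{(n)}(x)\to 0$.

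Uniqueness is by contradiction: if $\tilde g$ is another solution and $\tilde h:=\tilde g-\mathrm{id}$ differs from $h$ at some $v_0\in[-A,0)$, then the trajectory of \eqref{eqn:recursiveEquationReducedB} starting at $(\tilde h(v_0),v_0)$ lies in $\mathcal{P}$ or in $\mathcal{U}\cup\mathcal{C}$ according as $\tilde h(v_0)>h(v_0)$ or $\tilde h(v_0)<h(v_0)$, giving $v_n\to\infty$ or $\lim v_n\leq 0$ by Lemma~\ref{lem:limits}; but iterating the functional equation for $\tilde g$ forces the trajectory to stay on $\{(\tilde h(v),v)\}$ with $v_n\to 0$, a contradiction. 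Part (ii) is then read off from the definition of $h$ combined with Lemma~\ref{lem:limits}. The main obstacle is the circular dependence between Step~2 (the invariance argument wants regularity of $h$) and the regularity step (cleanest via the functional equation); the resolution is to extract monotonicity and one-sided continuity of $h$ directly from Lemma~\ref{lem:monotonicity}, then deduce \eqref{gxN}, and only afterwards bootstrap to full Lipschitz regularity and to the $x^2/2$ asymptotic along the orbit shrinking to $0$.
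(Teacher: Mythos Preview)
Your route is a genuine alternative to the paper's. The paper builds $g$ constructively via a monotone iteration $g_n\uparrow g$, starting from the ODE solution $g_1'=\Psi(g_1)$, proves by induction that each $g_n$ is nondecreasing and $1$-Lipschitz, and only afterwards identifies $g-\mathrm{id}$ with the dynamically defined $h$. You reverse the logic: take $h$ from the dynamics and read off the functional equation from one-step invariance of the critical set. Your coupling trick $(u_0,v_0)\mapsto(u_0+\delta,v_0-\delta)$ is nice---it gives $h(v-\delta)\le h(v)+\delta$ directly from Lemma~\ref{lem:monotonicity} applied after one step (since $v_1$ is unchanged while $u_1$ increases), so $h$ is $1$-Lipschitz and hence \emph{fully} continuous, not just one-sided as your ``resolution'' suggests. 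With continuity in hand, the argument that $(h(v_0),v_0)\in\mathcal C$ goes through: if the limit $L$ were negative the orbit would eventually sit in the open interior of $\{u<h(v)\}$, and pulling back under the continuous map $G^n$ would put a full neighbourhood of $(h(v_0),v_0)$ inside $\mathcal C\cup\mathcal U$, contradicting the definition of $h(v_0)$ as an infimum. Invariance of $\mathcal C$ under $G$ then yields \eqref{gxN}, and the phase separation and uniqueness follow essentially as in the paper.

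The one genuine gap is the asymptotic $h(x)\sim x^2/2$. Your proposed ``Gronwall along the forward orbit $g^{(n)}(x)\to 0$'' does not work as written: setting $q_n:=h(x_n)/x_n^2$ and linearising the functional equation gives
\[
q_{n+1}-q_n\;\approx\; q_n\,x_n\,(1-2q_n),
\]
so with $x_n<0$ the value $1/2$ is an \emph{unstable} fixed point of the $q$-dynamics along the forward orbit, and no Gronwall-type bound pins $q_n$ down. The paper handles this via its approximating sequence: $g\ge g_1$ gives the lower bound from the Taylor expansion of $g_1$, and an induction $g_n(x)\le x+wx^2$ for every $w>1/2$ gives the upper bound. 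If you want to stay within your dynamical framework, the clean substitute is the approximately conserved quantity $E_n:=u_n-v_n^2/2$: a direct computation gives $E_{n+1}-E_n=u_n^2/2+u_nR(v_{n+1})$ with $|R(y)|\le Cy^2$, and since along any trajectory with $v_n\le 0$ one has $\sum u_n\le|v_0|$ and $u_n\le u_0$, the total variation of $E_n$ is $O(|v_0|^3)$. Starting from $u_0=(\tfrac12\pm\varepsilon)v_0^2$ then forces the sign of $E_n$ to persist, which decides membership in $\mathcal P$ versus $\mathcal C\cup\mathcal U$ and sandwiches $h(v_0)$ between $(\tfrac12-\varepsilon)v_0^2$ and $(\tfrac12+\varepsilon)v_0^2$. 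That is the missing ingredient in your sketch.
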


\begin{proof}
We observe that the fact that $x \mapsto g(x)-x$ satisfies \eqref{eqn:phaseSeparation} implies $g(x)=x+h(x)$ for all $x\in\R$, with $h$ the function defined in \eqref{h-general}. Consequently, demonstrating that a solution to \eqref{gxN} satisfies \eqref{eqn:phaseSeparation} establishes the uniqueness of the function.

In the first part of the proof, we show the existence of a function $g$ satisfying \eqref{gxN} via an approximation argument, using a fixed-point approach. Next, we establish the regularity of $g$ stated in part (i) by analyzing the properties of its approximating sequence. Finally, we prove that $g$ satisfies \eqref{eqn:phaseSeparation}.\vspace{.1cm}

{\it Proof of \eqref{gxN}.} We fix an arbitrary constant $K> 0$ such that
$$ K\ \ge\ \sup_{x \in [-A,0]} (\Psi(x)+(x+A) \Psi'(x)) $$
and introduce the auxiliary function
$$ \sigma :  [-A,0]\,\ni\,x\ \mapsto\ Kx -(x+A) \Psi(x). $$
It is straightforward to verify that $\sigma'(x)\ge 0$ for all $x \in [-A,0]$, implying that $\sigma$ is~non\-decreasing. We also note that \eqref{gxN} can be rewritten as
\begin{equation}\label{gxN3}
(K+1)g(x)\ =\ g(g(x))+\sigma(g(x))+(x+A) \Psi(g(x))\quad\text{for }x \in [-A,0].
\end{equation}

We now define recursively a sequence of functions $(g_{n})_{n\ge 1}$ on $[-A,0]$ as follows. We begin by fixing $g_1$ as the unique solution to the differential equation $y'=\Psi(y)$ on $[-A,0]$ with the initial condition $y(0)=0$. Then for any $n\ge 1$ and $x \in [-A,0]$, we define $g_{n+1}$ by the relation
\begin{align}
(K+1) g_{n+1}(x) :=\ &g_{n}(g_{n} (x))\,+\,\sigma(g_{n}(x))\,+\,(x +A) \Psi(g_{n}(x)) \label{gn+1}\\
=\ &g_{n}(g_{n} (x))\,+\,K g_{n}(x)\,-\,(g_{n}(x)-x)\Psi(g_{n}(x)). \label{gn+1'}
\end{align}
In the second line, we used the definition of $\sigma$. We prove by induction on $n$ that the sequence $(g_{n})_{n\ge 1}$ is nondecreasing on $[-A,0]$ and consists of nondecreasing functions that are $\mathcal{C}^2$, $1$-Lipschitz, and satisfy $g_{n}(0)=0$ and $g_{n}'(0) =1$ for all $n$.

We immediately observe from the definition of $g_1$ that it is a $\mathcal{C}^2$ function with $g_1(0)=0$, using the fact that $\Psi$ is $\mathcal{C}^{1}$, and that $g_1'(0)=\Psi(0)=1$. Moreover, we have
$$ 0\,\le\,\Psi(g_1(x))\,=\,g_1'(x)\,\le\,1\quad\text{for all }x \in [-A,0]. $$
Therefore $g_1$ is nondecreasing and $1$-Lipschitz, and since $g_1'=\Psi \circ g_1$ is also nondecreasing, we have that $g_1$ is convex. As a result, by the definition in \eqref{gn+1'},
$$ (K+1) (g_2(x)-g_1(x))\ =\ g_1(g_1(x))-\left( g_1(x)+(g_1(x)-x)\Psi(g_1(x))\right)\ \ge\  0 $$
for all $x \in [-A,0]$, where we used the fact that $y \mapsto (y-x) g_1'(x)+g_1(x)$ is the tangent of $g_1$ at point $x$, and hence smaller than $g_1$, in particular at $y=g_1(x)$.

Turning to the inductive step, we fix $n\in\N$ and assume that $g_{n}$ is a nondecreasing function that is $\mathcal{C}^2$ and $1$-Lipschitz with $g_{n}(0)=0$ and $g_{n}'(0)=1$. We also assume that $g_{n+1}(x)\ge g_{n}(x)$ for all $x \in [-A,0]$. By \eqref{gn+1'}, $g_{n+1}$ is then clearly $\mathcal{C}^2$ as well, and we have
$$ (K+1)g_{n+1}(0)\ =\ g_{n}(g_{n}(0))+K g_{n}(0)-g_{n}(0) \Psi(g_{n}(0))\ =\ 0 $$
since $g_{n}(0)=0$.

Regarding the first derivative of $g_{n+1}$, we compute using \eqref{gn+1'}
\begin{multline*}
(K+1)g_{n+1}'(x)\\
\begin{split}
&=\ g_{n}'(g_{n}(x)) g_{n}'(x)+K g_{n}'(x)-(g_{n}'(x)-1)\Psi(g_{n}(x))-(g_{n}(x)-x) \Psi'(g_{n}(x)) g_{n}'(x)\\
&= g_{n}'(g_{n}(x)) g_{n}'(x)+\Psi(g_{n}(x))+g_{n}'(x)\big(K-\Psi(g_{n}(x)) -(g_{n}(x)-x) \Psi'(g_{n}(x))\big).
\end{split}
\end{multline*}
We first observe that $g_{n+1}'(0)=1$. Moreover, since $g_{n}$ is nondecreasing and $1$-Lipschitz, we know that $g_{n}(x)-x\ge 0$ for all $x \in [-A,0]$ and can therefore estimate
$$ (K+1)g_{n+1}'(x)\ \le\ 1+\Psi(g_{n}(x))+\left( K-\Psi(g_{n}(x)) \right)\ \le\ K+1. $$
Additionally, since $g_{n}(x) \in [-A,0]$, we can use the definition of $K$ to obtain the following lower bound:
\begin{align}
\begin{split}\label{eqn:formula}
K-\Psi(g_{n}(x))&-(g_{n}(x)-x) \Psi'(g_{n}(x))\\
&\ge\ K-\sup_{y \in [-A,0]} (\Psi(y)+(y+A)\Psi'(y))\ \ge\ 0.
\end{split}
\end{align}
Thus, we conclude that $g_{n+1}'(x) \in [0,1]$ for all $x \in [-A,0]$, which shows that $g_{n+1}$ is nondecreasing and $1$-Lipschitz.

Finally, we show that $g_{n+2}\ge g_{n+1}$, using \eqref{gn+1}. We have
\begin{multline*}
(K+1)(g_{n+2}(x)-g_{n+1}(x))\ =\ g_{n+1}(g_{n+1}(x))-g_{n}(g_{n}(x))\\
+\ \sigma(g_{n+1}(x))-\sigma(g_{n}(x))+(x+A) (\Psi(g_{n+1}(x))-\Psi(g_{n}(x))).
\end{multline*}
Since $g_{n}$ and $g_{n+1}$ are nondecreasing with $g_{n+1}\ge g_{n}$, we have $g_{n+1}(g_{n+1}(x))-g_{n}(g_{n}(x))\ge 0$. Similarly, using the monotonicity of $\sigma$ and $\Psi$, and noting that $x+A\ge 0$, we infer that $g_{n+2}(x)\ge g_{n+1}(x)$ for all $x\in [-A,0]$.
  Next, we define $g$ on $[-A,\infty)$ by taking the increasing limit of $g_n$ on $[-A,0]$, i.e.
\[
  g(x) := \begin{cases}
     \lim_{n\to\infty} g_{n}(x) &\text{if }x \in [-A,0],\\
     x &\text{if } x > 0.
  \end{cases}
\]
Using the properties of the sequence $(g_{n})_{n\ge 1}$, we observe that $g$ is  nondecreasing, $1$-Lip\-schitz, and satisfies $g(0)=0$. Moreover, for all $x \in [-A,0)$, we have $g(x)\ge g_1(x)>x$. By continuity of $\Psi$, we also have
$$ (K+1)g(x)=g(g (x))+ K g(x)- (g(x)-x) \Psi(g(x))\quad\text{for all }x \in [-A,0], $$
which shows that $g$ satisfies equation \eqref{gxN}.\vspace{.1cm}

{\it Proof of \eqref{gtaylor}.} It remains to show that $g(x)-x\sim x^2/2$ as $x\to 0$ on $[-A,0]$.  First, observe that by construction, $g\ge g_1$, where $g_1$ is a $\mathcal{C}^2$ function with $g_1(0)=0$, and $g_1'(0)=g_1''(0)=1$. Therefore, by a Taylor expansion of $g_1$ around $x=0$, we have
$$ g(x)-x\ \ge\ g_1(x)-x\ =\ \frac{x^2}{2}(1+o(1)). $$
To complete the proof, we therefore only need to show that for any $w>1/2$, there exists $\delta>0$ such that
$$ g(x)\,\le\,x+w x^2\quad\text{for all }x \in [-\delta,0]. $$
To this end, we fix $\delta \in (0,A)$ such that $g_1(x)\le  x+wx^2$  and $\Psi(x)\ge (1+wx)^2$ for all $x \in [-\delta,0]$, using the fact that $\Psi(0)=\Psi'(0)=1$. We will prove by induction that for all $n\ge 1$ and $x \in [-\delta,0]$, the inequality $g_{n}(x)\le Q(x) := x+wx^2$ holds, from which \eqref{gtaylor} will follow by passage to the limit.

Assuming $g_{n}(x)\le  Q(x)$ for all $x \in [-\delta,0]$ and using formula \eqref{gn+1}, it follows that
\begin{align*}
(K+1)g_{n+1}(x)\ &\le\ Q(Q(x))+\sigma(Q(x))+(x+ A)\Psi(Q(x))\\
&\le\ Q(Q(x))+KQ(x)-(Q(x) -x) \Psi(Q(x)),
\end{align*}
since $g_{n}$ is $1$-Lipschitz, thus $g_{n}(x)\ge-\delta$ for all $x \in[-\delta,0]$. Now consider the expression $Q(Q(x))-(Q(x) -x)\Psi(Q(x))$:
\begin{gather*}
Q(Q(x))-(Q(x)-x) \Psi(Q(x))\ =\ Q(x)+w Q(x)^2-w x^2 \Psi(Q(x)).
\intertext{This simplifies to}
Q(x)+w x^2\left((1+wx)^2-\Psi(Q(x))\right).
\end{gather*}
Since $Q(x)\ge x$, we have $(1+wx)^2-\Psi(Q(x))\le  0$, which shows that
$$ Q(Q(x)) -(Q(x) -x) \Psi(Q(x))\ \le\ Q(x). $$
Hence, we conclude that
$$ (K+1)g_{n+1}(x)\ \le\ (K+1) Q(x), $$
which completes the induction step and the proof of \eqref{gtaylor}.

{\it Proof of \eqref{eqn:phaseSeparation}.} To complete the proof of Proposition~\ref{p:g}, it remains to show that the function $h^{*}: x \mapsto g(x)-x$ and the function $h$ in \eqref{h-general} are identical on  $[-A, \infty)$.  Using the properties of $g$,  we observe that $h^{*}$ is a continuous solution of the functional equation
\begin{equation}\label{h}
h^{*}(x+h^{*}(x))\ =\ h^{*}(x) \Psi(x+h^{*}(x))\quad\text{for all }x \in [-A,0].
\end{equation}
such that $0\le h^{*}(x) \le (-x)_+$ for all $x\in \R$ and $h(x) >0$ for $x<0$. We underscore that to prove $h^{*}$ coincides with $h$ on $[-A, \infty)$, no additional regularity conditions on $h^{*}$, such as monotonicity or $1$-Lipschitz continuity, are required.

Recall that $h(x)=0$ for all $x\ge 0$. Hence $h^*=h$ on $\R_+$. It remains to show that $h^*(v_0)=h(v_0)$ for any $v_{0} \in [-A,0)$. To this end,
let  $(u_{n}^{*},v_{n}^{*})_{n\ge 0}$ denote the solution of the recursive equation \eqref{eqn:recursiveEquationReducedB} with initial conditions $(u_{0}^{*},v_{0}^{*})=(h^{*}(v_{0}),v_{0})$. By induction, we immediately obtain $u_{n}^{*}=h^{*}(v_{n}^{*})$ for all $n\ge 1$. This follows from the recurrence relation
$$ u^{*}_{n+1}\ =\ u^{*}_{n} \Psi(v^{*}_{n+1})\ =\ h^{*}(v^{*}_{n}) \Psi(h^{*}(v^{*}_{n})+v^{*}_{n})\ =\ h^{*}(v^{*}_{n}+h^{*}(v^{*}_{n}))\ =\ h^{*}(v^{*}_{n+1}), $$
  when using \eqref{h} for the last equality.  Note that if $v^{*}_{n}\le 0$, then  $v^{*}_{n+1}= v_{n}^{*}+ h^{*}(v_{n}^{*}) \le 0$ as well since $h^{*}(x) \le (-x)_+$. Therefore, $\sup_{n\ge 0} v^{*}_{n}\le  0$. Since $(v_{n}^{*})_{n\ge 0}$ is a nondecreasing sequence, we have $v^{*}_{\infty}:=\lim_{n\to\infty} v^{*}_{n}\le 0$. Furthermore, $u^{*}_{\infty}:= \lim_{n\to\infty}u^{*}_{n}=h^{*}(v^{*}_{\infty})$, and thus $u^{*}_{\infty}>0$ if $v^{*}_{\infty}<0$.  But this contradicts \eqref{eqn:limits}, and we conclude $\lim_{n\to\infty} v^{*}_{n}=0$.

Next, let $(u_{n},v_{n})_{n\ge 0}$ be a solution to \eqref{eqn:recursiveEquationReducedB} with $u_{0}>h^{*}(v_{0})$. By Lemma~\ref{lem:monotonicity}, we know that $u_{n}\ge u^{*}_{n}$ and $v_{n}\ge v^{*}_{n}$ for all $n\in\N$. Moreover, we have the inequality
$$ v_{n+1}-v^{*}_{n+1}\ =\ v_{n}-v^{*}_{n}+u_{n}-u_{n}^{*}\ \ge\ v_{n}-v_{n}^{*}\ \ge\ v_1-v_1^{*}. $$
Since $v_1-v_1^{*}=u_{0}-h(v_{0}^{*})>0$, it follows that $\lim_{n\to\infty} v_{n}>0$. By Lemma~\ref{lem:limits}, we conclude $\lim_{n\to\infty}v_{n}=\infty$.

Similarly, if $u_{0}<h^*(v_{0})$, then for all $n\in\N$, we have $u_{n}\le  u_{n}^{*}$ and $v_{n}\le  v_{n}^{*}$, with
$$ v_{n+1}-v^{*}_{n+1}\ \le\ v_{n}-v_{n}^{*}\ \le\ v_1-v_1^{*}\ <\ 0. $$
Thus, we deduce $\lim_{n\to\infty} v_{n}<0$, and hence $h^*(v_0)=h(v_0)$, completing the proof.
\end{proof}

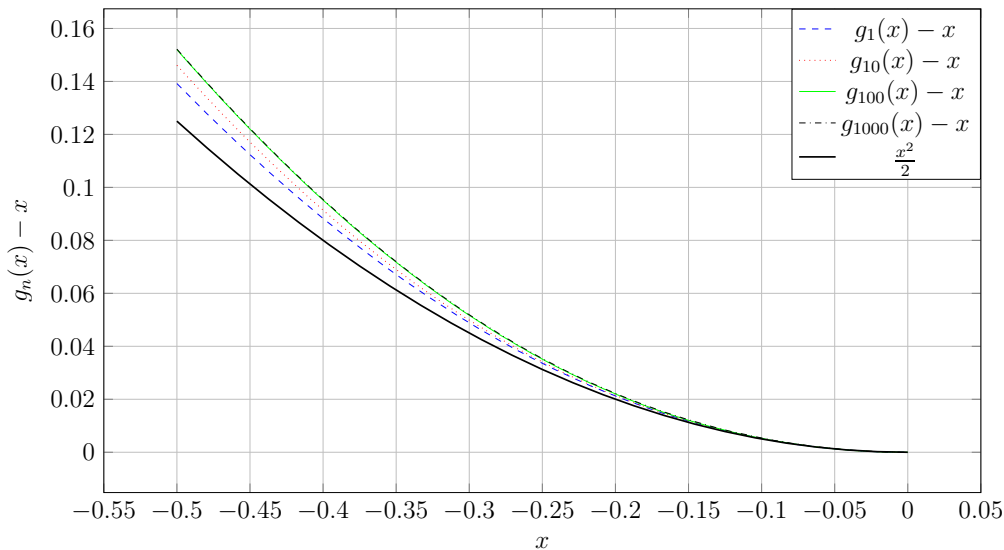
\begin{figure}[ht]
\begin{tikzpicture}[scale=0.8]
\begin{axis}[
    width=\textwidth,
    height=0.6\textwidth,
    xlabel={$x$},
    ylabel={$g_{n}(x)-x$},
    legend style={at={(1,1)}, anchor=north east},
    grid=both,
    scaled ticks=false,  
    ticklabel style={/pgf/number format/fixed},  
]
\addplot[color=blue, dashed] coordinates {
(-0.5000,0.1392) (-0.4949,0.1364) (-0.4899,0.1336) (-0.4848,0.1308) (-0.4798,0.1280) (-0.4747,0.1253) (-0.4697,0.1226) (-0.4646,0.1200) (-0.4596,0.1173) (-0.4545,0.1147) (-0.4495,0.1121) (-0.4444,0.1096) (-0.4394,0.1070) (-0.4343,0.1045) (-0.4293,0.1021) (-0.4242,0.0996) (-0.4192,0.0972) (-0.4141,0.0948) (-0.4091,0.0925) (-0.4040,0.0901) (-0.3990,0.0878) (-0.3939,0.0856) (-0.3889,0.0833) (-0.3838,0.0811) (-0.3788,0.0790) (-0.3737,0.0768) (-0.3687,0.0747) (-0.3636,0.0726) (-0.3586,0.0705) (-0.3535,0.0685) (-0.3485,0.0665) (-0.3434,0.0645) (-0.3384,0.0626) (-0.3333,0.0607) (-0.3283,0.0588) (-0.3232,0.0570) (-0.3182,0.0552) (-0.3131,0.0534) (-0.3081,0.0516) (-0.3030,0.0499) (-0.2980,0.0482) (-0.2929,0.0465) (-0.2879,0.0449) (-0.2828,0.0433) (-0.2778,0.0417) (-0.2727,0.0402) (-0.2677,0.0386) (-0.2626,0.0372) (-0.2576,0.0357) (-0.2525,0.0343) (-0.2475,0.0329) (-0.2424,0.0315) (-0.2374,0.0302) (-0.2323,0.0289) (-0.2273,0.0276) (-0.2222,0.0264) (-0.2172,0.0251) (-0.2121,0.0240) (-0.2071,0.0228) (-0.2020,0.0217) (-0.1970,0.0206) (-0.1919,0.0195) (-0.1869,0.0185) (-0.1818,0.0175) (-0.1768,0.0165) (-0.1717,0.0155) (-0.1667,0.0146) (-0.1616,0.0137) (-0.1566,0.0129) (-0.1515,0.0120) (-0.1465,0.0112) (-0.1414,0.0104) (-0.1364,0.0097) (-0.1313,0.0090) (-0.1263,0.0083) (-0.1212,0.0076) (-0.1162,0.0070) (-0.1111,0.0064) (-0.1061,0.0058) (-0.1010,0.0053) (-0.0960,0.0047) (-0.0909,0.0043) (-0.0859,0.0038) (-0.0808,0.0034) (-0.0758,0.0029) (-0.0707,0.0026) (-0.0657,0.0022) (-0.0606,0.0019) (-0.0556,0.0016) (-0.0505,0.0013) (-0.0455,0.0010) (-0.0404,0.0008) (-0.0354,0.0006) (-0.0303,0.0005) (-0.0253,0.0003) (-0.0202,0.0002) (-0.0152,0.0001) (-0.0101,0.0001) (-0.0051,0.0000) (0.0000,0.0000) };
\addlegendentry{$g_1(x)-x$}
\addplot[color=red, dotted] coordinates {
(-0.5000,0.1461) (-0.4949,0.1430) (-0.4899,0.1400) (-0.4848,0.1370) (-0.4798,0.1340) (-0.4747,0.1311) (-0.4697,0.1281) (-0.4646,0.1253) (-0.4596,0.1224) (-0.4545,0.1196) (-0.4495,0.1168) (-0.4444,0.1141) (-0.4394,0.1114) (-0.4343,0.1087) (-0.4293,0.1061) (-0.4242,0.1035) (-0.4192,0.1009) (-0.4141,0.0983) (-0.4091,0.0958) (-0.4040,0.0934) (-0.3990,0.0909) (-0.3939,0.0885) (-0.3889,0.0861) (-0.3838,0.0838) (-0.3788,0.0815) (-0.3737,0.0792) (-0.3687,0.0770) (-0.3636,0.0748) (-0.3586,0.0726) (-0.3535,0.0705) (-0.3485,0.0684) (-0.3434,0.0663) (-0.3384,0.0642) (-0.3333,0.0622) (-0.3283,0.0603) (-0.3232,0.0583) (-0.3182,0.0564) (-0.3131,0.0546) (-0.3081,0.0527) (-0.3030,0.0509) (-0.2980,0.0492) (-0.2929,0.0475) (-0.2879,0.0458) (-0.2828,0.0441) (-0.2778,0.0425) (-0.2727,0.0409) (-0.2677,0.0393) (-0.2626,0.0378) (-0.2576,0.0363) (-0.2525,0.0348) (-0.2475,0.0334) (-0.2424,0.0320) (-0.2374,0.0306) (-0.2323,0.0292) (-0.2273,0.0279) (-0.2222,0.0267) (-0.2172,0.0254) (-0.2121,0.0242) (-0.2071,0.0230) (-0.2020,0.0219) (-0.1970,0.0208) (-0.1919,0.0197) (-0.1869,0.0186) (-0.1818,0.0176) (-0.1768,0.0166) (-0.1717,0.0156) (-0.1667,0.0147) (-0.1616,0.0138) (-0.1566,0.0129) (-0.1515,0.0121) (-0.1465,0.0113) (-0.1414,0.0105) (-0.1364,0.0097) (-0.1313,0.0090) (-0.1263,0.0083) (-0.1212,0.0077) (-0.1162,0.0070) (-0.1111,0.0064) (-0.1061,0.0058) (-0.1010,0.0053) (-0.0960,0.0048) (-0.0909,0.0043) (-0.0859,0.0038) (-0.0808,0.0034) (-0.0758,0.0029) (-0.0707,0.0026) (-0.0657,0.0022) (-0.0606,0.0019) (-0.0556,0.0016) (-0.0505,0.0013) (-0.0455,0.0011) (-0.0404,0.0008) (-0.0354,0.0006) (-0.0303,0.0005) (-0.0253,0.0003) (-0.0202,0.0002) (-0.0152,0.0001) (-0.0101,0.0001) (-0.0051,0.0000) (0.0000,0.0000) };
\addlegendentry{$g_{10}(x)-x$}
\addplot[color=green, solid] coordinates {
(-0.5000,0.1520) (-0.4949,0.1488) (-0.4899,0.1457) (-0.4848,0.1426) (-0.4798,0.1395) (-0.4747,0.1365) (-0.4697,0.1334) (-0.4646,0.1305) (-0.4596,0.1275) (-0.4545,0.1246) (-0.4495,0.1217) (-0.4444,0.1189) (-0.4394,0.1160) (-0.4343,0.1133) (-0.4293,0.1105) (-0.4242,0.1078) (-0.4192,0.1051) (-0.4141,0.1024) (-0.4091,0.0998) (-0.4040,0.0972) (-0.3990,0.0947) (-0.3939,0.0922) (-0.3889,0.0897) (-0.3838,0.0872) (-0.3788,0.0848) (-0.3737,0.0824) (-0.3687,0.0801) (-0.3636,0.0778) (-0.3586,0.0755) (-0.3535,0.0733) (-0.3485,0.0711) (-0.3434,0.0689) (-0.3384,0.0668) (-0.3333,0.0647) (-0.3283,0.0626) (-0.3232,0.0606) (-0.3182,0.0586) (-0.3131,0.0566) (-0.3081,0.0547) (-0.3030,0.0528) (-0.2980,0.0510) (-0.2929,0.0492) (-0.2879,0.0474) (-0.2828,0.0456) (-0.2778,0.0439) (-0.2727,0.0423) (-0.2677,0.0406) (-0.2626,0.0390) (-0.2576,0.0374) (-0.2525,0.0359) (-0.2475,0.0344) (-0.2424,0.0329) (-0.2374,0.0315) (-0.2323,0.0301) (-0.2273,0.0288) (-0.2222,0.0274) (-0.2172,0.0261) (-0.2121,0.0249) (-0.2071,0.0237) (-0.2020,0.0225) (-0.1970,0.0213) (-0.1919,0.0202) (-0.1869,0.0191) (-0.1818,0.0180) (-0.1768,0.0170) (-0.1717,0.0160) (-0.1667,0.0150) (-0.1616,0.0141) (-0.1566,0.0132) (-0.1515,0.0123) (-0.1465,0.0115) (-0.1414,0.0107) (-0.1364,0.0099) (-0.1313,0.0092) (-0.1263,0.0085) (-0.1212,0.0078) (-0.1162,0.0071) (-0.1111,0.0065) (-0.1061,0.0059) (-0.1010,0.0054) (-0.0960,0.0048) (-0.0909,0.0043) (-0.0859,0.0039) (-0.0808,0.0034) (-0.0758,0.0030) (-0.0707,0.0026) (-0.0657,0.0022) (-0.0606,0.0019) (-0.0556,0.0016) (-0.0505,0.0013) (-0.0455,0.0011) (-0.0404,0.0008) (-0.0354,0.0006) (-0.0303,0.0005) (-0.0253,0.0003) (-0.0202,0.0002) (-0.0152,0.0001) (-0.0101,0.0001) (-0.0051,0.0000) (0.0000,0.0000) };
\addlegendentry{$g_{100}(x)-x$}
\addplot[color=black, dashdotted] coordinates {
(-0.5000,0.1522) (-0.4949,0.1490) (-0.4899,0.1459) (-0.4848,0.1428) (-0.4798,0.1397) (-0.4747,0.1367) (-0.4697,0.1337) (-0.4646,0.1307) (-0.4596,0.1277) (-0.4545,0.1248) (-0.4495,0.1219) (-0.4444,0.1191) (-0.4394,0.1163) (-0.4343,0.1135) (-0.4293,0.1107) (-0.4242,0.1080) (-0.4192,0.1053) (-0.4141,0.1027) (-0.4091,0.1000) (-0.4040,0.0975) (-0.3990,0.0949) (-0.3939,0.0924) (-0.3889,0.0899) (-0.3838,0.0875) (-0.3788,0.0850) (-0.3737,0.0827) (-0.3687,0.0803) (-0.3636,0.0780) (-0.3586,0.0757) (-0.3535,0.0735) (-0.3485,0.0713) (-0.3434,0.0691) (-0.3384,0.0670) (-0.3333,0.0649) (-0.3283,0.0628) (-0.3232,0.0608) (-0.3182,0.0588) (-0.3131,0.0569) (-0.3081,0.0549) (-0.3030,0.0530) (-0.2980,0.0512) (-0.2929,0.0494) (-0.2879,0.0476) (-0.2828,0.0459) (-0.2778,0.0441) (-0.2727,0.0425) (-0.2677,0.0408) (-0.2626,0.0392) (-0.2576,0.0377) (-0.2525,0.0361) (-0.2475,0.0346) (-0.2424,0.0332) (-0.2374,0.0317) (-0.2323,0.0303) (-0.2273,0.0290) (-0.2222,0.0276) (-0.2172,0.0263) (-0.2121,0.0251) (-0.2071,0.0239) (-0.2020,0.0227) (-0.1970,0.0215) (-0.1919,0.0204) (-0.1869,0.0193) (-0.1818,0.0182) (-0.1768,0.0172) (-0.1717,0.0162) (-0.1667,0.0152) (-0.1616,0.0143) (-0.1566,0.0134) (-0.1515,0.0125) (-0.1465,0.0117) (-0.1414,0.0109) (-0.1364,0.0101) (-0.1313,0.0093) (-0.1263,0.0086) (-0.1212,0.0079) (-0.1162,0.0073) (-0.1111,0.0067) (-0.1061,0.0061) (-0.1010,0.0055) (-0.0960,0.0050) (-0.0909,0.0044) (-0.0859,0.0040) (-0.0808,0.0035) (-0.0758,0.0031) (-0.0707,0.0027) (-0.0657,0.0023) (-0.0606,0.0020) (-0.0556,0.0017) (-0.0505,0.0014) (-0.0455,0.0011) (-0.0404,0.0009) (-0.0354,0.0007) (-0.0303,0.0005) (-0.0253,0.0004) (-0.0202,0.0002) (-0.0152,0.0001) (-0.0101,0.0001) (-0.0051,0.0000) (0.0000,0.0000) };
\addlegendentry{$g_{1000}(x)-x$}
\addplot[color=black,  thick, domain=-0.5:0] {x^2/2};
\addlegendentry{$\frac{x^2}{2}$}
\end{axis}
\end{tikzpicture}
\caption{\small Numerical computations  of $g_{n}(x)-x$ and $x^2/2$,  where $\eta=-0.5, K=10,$ and $\Psi(x)= \frac{1+2x}{1+x}$  corresponding to the generalized DR model described in Section~\ref{def-LF} with ${\tt Z}=1$ and $p=0.5$.}
\end{figure}

We conclude this section with the proof of Theorem~\ref{thm:criticalCurve}.

\begin{proof}[Proof of Theorem~\ref{thm:criticalCurve}]
For each $A>0$, we apply Proposition~\ref{p:g} to construct the unique function $g^A$ on $[-A,\infty)$ that satisfies \eqref{gxN}. By the compatibility property, for any $B>A$, the restriction of $g^B$ to $[-A,\infty)$ equals $g^A$. Thus, we can construct $g$ on $\R$ by taking the projective limit.

Next, define the function $h^{*} : x \mapsto g(x)-x$. We deduce from Proposition~\ref{p:g} that this is the unique nonincreasing $1$-Lipschitz, non-trivial solution to the functional equation  \eqref{eqn:functionalEquationCriticalCurve}, and by the same result, we know that $h^{*}(x) \sim x^2/2$ as $x \uparrow 0$. Finally, for any $(u_{0},v_{0})\in\R_+\times \R$, we have the following characterizations:
\begin{align*}
&(u_{0},v_{0})\in\mathcal{C} \iff u_{0}=h^{*}(v_{0}), \quad (u_{0},v_{0})\in\mathcal{P} \iff u_{0}>h^{*}(v_{0})\\
\text{and}\quad &(u_{0},v_{0})\in\mathcal{U} \iff u_{0}<h^{*}(v_{0}).
\end{align*}
This proves that $h=h^{*}$ and establishes the decomposition in \eqref{eqn:caracterisationOfPCU}.
\end{proof}

The duality relationship outlined in Proposition~\ref{prop:duality} enables the definition of a curve $\check{h}$, which plays the same role as $h$ in the backward evolution of the dynamics. Heuristically, it can be described as the trajectory of $(u_{n},v_{n})$ such that $(u_{0},v_{0})$ lies within a small neighborhood of $(0,0)$.

\begin{corollary}
\label{cor:dualCriticalCurve}
Assume \eqref{ass:psi}.  There exists a unique nondecreasing and continuous function $\check{h}$, with $0\le \check{h}(x)\le  \Psi(x)\, x_{+} $ for all $x\in\R$, that satisfies the functional equation
\begin{equation}\label{eqn:dualFunctionalEquation}
\check{h}(x+\check{h}(x))\ =\ \Psi(x+\check{h}(x)) \check{h}(x)\quad\text{for all }x\in  \R_+,
\end{equation}
with the asymptotics $\check{h}(x)\sim x^2/2$ as $x\downarrow 0$, and $\check{h}(x)\to \infty$ as $x\to \infty$.
\end{corollary}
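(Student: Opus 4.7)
My plan is to adapt the fixed-point construction of Proposition~\ref{p:g} to the positive half-line, working on truncated intervals $[0,B]$ and then letting $B\to\infty$. One might hope to invoke the duality of Proposition~\ref{prop:duality} and deduce the result from Theorem~\ref{thm:criticalCurve} applied to $\check\Psi(x):=1/\Psi(-x)$, but assumption \eqref{ass:psi} alone does not guarantee $\lim_{x\to-\infty}\Psi(x)>0$, so $\check\Psi$ need not be bounded and a direct appeal is awkward. A direct construction is therefore preferable.

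Fix $B>0$. I would construct $\check g^B:[0,B]\to\R_+$ with $\check g^B(0)=0$, $\check g^B(x)>x$ for $x\in(0,B]$, satisfying $\check g^B(\check g^B(x))=\check g^B(x)+\Psi(\check g^B(x))(\check g^B(x)-x)$ whenever both sides are defined. Mirroring Proposition~\ref{p:g}, take $\check g_1^B$ to be the solution of $y'=\Psi(y)$ with $y(0)=0$, and for $n\ge 1$ define
\[
(K+1)\check g_{n+1}^B(x)\,:=\,\check g_n^B(\check g_n^B(x))\,+\,K\check g_n^B(x)\,-\,(\check g_n^B(x)-x)\,\Psi(\check g_n^B(x)),
\]
with $K=K(B)$ sufficiently large, as in the proof of Proposition~\ref{p:g}. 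A parallel induction shows that $(\check g_n^B)_{n\ge 1}$ is monotone in $n$, with each term $\mathcal{C}^2$, nondecreasing on $[0,B]$, satisfying $(\check g_n^B)'\ge 1$, $\check g_n^B(0)=0$, and $(\check g_n^B)'(0)=1$. The pointwise limit $\check g^B$ then solves the functional equation on $[0,B]$. By uniqueness, the $\check g^B$ glue as $B\to\infty$ to give $\check g$ on $[0,\infty)$, and setting $\check h(x):=\check g(x)-x$ for $x\ge 0$ and $\check h(x):=0$ for $x<0$ yields the claimed $\check h$, nondecreasing since $\check g'\ge 1$. The asymptotic $\check h(x)\sim x^2/2$ as $x\downarrow 0$ follows by the same squeezing argument as for \eqref{gtaylor}: the lower bound $\check h\ge \check g_1-x$ uses the Taylor expansion of $\check g_1$ at the origin, and the upper bound is proved by induction showing $\check g_n^B(x)\le x+wx^2$ near $0$ for any fixed $w>1/2$.

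For the divergence $\check h(x)\to\infty$ as $x\to\infty$, iterate the functional equation: for $x_0>0$, set $x_{n+1}:=x_n+\check h(x_n)$, so that $\check h(x_{n+1})=\Psi(x_{n+1})\check h(x_n)$. Since $\Psi$ is nondecreasing with $\Psi(x_1)>\Psi(0)=1$, the product $\prod_{k\ge 1}\Psi(x_k)$ grows at least geometrically, forcing $\check h(x_n)\to\infty$ and hence $x_n\to\infty$; monotonicity of $\check h$ then extends this to the global limit. Uniqueness is handled by a phase-separation argument analogous to \eqref{eqn:phaseSeparation}: any two continuous solutions with the prescribed quadratic behavior at $0$ generate the same orbits of the backward dynamics and hence coincide. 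The main technical obstacle is passing from the compact construction on $[0,B]$ to the unbounded domain $[0,\infty)$: the constant $K$ depends on $B$, so one must verify that the $\check g^B$'s are consistent as $B$ varies and that the quadratic upper bound near $0$ propagates uniformly in $B$.
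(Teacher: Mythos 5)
Your stated reason for avoiding the paper's duality argument rests on a misreading. You note that \eqref{ass:psi} does not guarantee $\lim_{x\to-\infty}\Psi(x)>0$ so that $\check\Psi(x):=1/\Psi(-x)$ may be unbounded; this is true, but Proposition~\ref{p:g} does not require boundedness. Its hypotheses are only that the input be a nondecreasing $\mathcal{C}^2$ function $[-A,\infty)\to\R_+$ with value $1$ and derivative $1$ at the origin. Since under \eqref{ass:psi} the function $\Psi$ is positive and $\mathcal{C}^2$ on all of $\R$, the dual $\check\Psi$ is automatically positive, nondecreasing and $\mathcal{C}^2$ on every $[-A,\infty)$, with $\check\Psi(0)=\check\Psi'(0)=1$. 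So Proposition~\ref{p:g} applies to $\check\Psi$ without any extra condition; this is precisely what the paper does. Setting $\check h(x):=\tilde h(-x)\Psi(x)$, where $\tilde h$ is the critical curve of $\check\Psi$, immediately gives \eqref{eqn:dualFunctionalEquation}, and the bound $0\le\check h(x)\le x_+\Psi(x)$ falls out of $0\le\tilde h(y)\le(-y)_+$.

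Your direct construction on $[0,B]$ runs into a genuine obstruction that the duality route was designed to circumvent. The iteration
\[
(K+1)\check g_{n+1}^B(x)\,=\,\check g_n^B\bigl(\check g_n^B(x)\bigr)+K\check g_n^B(x)-(\check g_n^B(x)-x)\Psi(\check g_n^B(x))
\]
is not well-defined on $[0,B]$: since $\check g_n^B(x)>x$ for $x>0$, the inner argument $\check g_n^B(x)$ typically exceeds $B$, and the composition leaves the domain. This is exactly the asymmetry between the two half-lines. On $[-A,0]$ the iterates of Proposition~\ref{p:g} are $1$-Lipschitz and fix the origin, hence $x\le g_n(x)\le 0$ and the domain is preserved; that same $1$-Lipschitz property also supplies the uniform bound needed for the monotone convergence. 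On $[0,B]$ you have $(\check g_n^B)'\ge 1$, the iterates expand, and you provide no a priori upper bound keeping $\check g_n^B(x)$ in $[0,B]$ nor keeping the increasing sequence $(\check g_n^B)_n$ from diverging. To salvage the scheme you would need to first establish a uniform linear bound such as $\check g_n^B(x)\le(1+\Psi(\infty))x$ and then iterate on a shrunken interval $[0,B/(1+\Psi(\infty))]$; but proving such a bound is the real content, and your sketch does not address it. Two further omissions: the explicit estimate $\check h(x)\le\Psi(x)\,x_+$ stated in the corollary is not derived, and the uniqueness argument is only gestured at, whereas the paper reduces it cleanly to the already-proven uniqueness of the critical curve for $\check\Psi$.
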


\begin{proof}
Using the notation of Proposition~\ref{prop:duality}, we first apply Proposition~\ref{p:g} to the function
$$ \check{\Psi} : x \mapsto \frac{1}{\Psi(-x)}. $$
This defines a function $\tilde{h}$ such that if $(\check{u}_{k}, \check{v}_{k})_{k\ge 0}$ is the solution to \eqref{eqn:recursiveEquationReducedB} with $\check{\Psi}$ in place of $\Psi$, and with initial condition $\check{u}_{0}=\widetilde{h}(\check{v}_{0})$, then $\check{u}_{k}=\tilde{h}(\check{v}_{k})$ for all $k\ge 1$.  Let $N\ge 2$. Applying the duality relationship from Proposition~\ref{prop:duality} to $u_{n}:= \check{u}_{N-n}$ and $v_{n}:= -\check{v}_{N-n+1}$  for $0\le n \le N$, we obtain $u_{n}= \widetilde{h}(\check{v}_{N-n})=\widetilde{h}(-v_{n+1})$. Since $u_{n+1}=u_{n} \Psi(v_{n+1})$, we get
$$ u_{n+1}\,=\,\check{h}(v_{n+1}),\quad\text{where }\check{h}(x)\,:=\,\widetilde{h}(-x ) \Psi(x) \text{ for }x\in\R. $$
Thus, $\check{h}$ satisfies \eqref{eqn:dualFunctionalEquation} and exhibits regularity similar to that of $\widetilde{h}$ near $0$.

In addition, since $\check{h}$ is non-decreasing, it converges to a limit $\ell \in (0,\infty]$.  Assume that $\ell < \infty$. Then, letting $x \to \infty$ in \eqref{eqn:dualFunctionalEquation} and using $\Psi(\infty) > 1$, we obtain
\[
  \ell\,=\,\Psi(\infty) \ell\,>\,\ell,
\]
a contradiction. Hence $\lim_{x \to \infty} \check{h}(x) = \infty$.

 Finally, we prove uniqueness of the solution to \eqref{eqn:dualFunctionalEquation} as follows. Let $\widehat{h}$ be another solution to \eqref{eqn:dualFunctionalEquation}, satisfying the same regularity conditions as $\check{h}$. We aim to show that the function $x\to\widehat{h}(-x)/\Psi(-x)$ defines the critical curve associated with $\check{\Psi}$. By uniqueness of the critical curve, this implies $\widehat{h}(-x)/\Psi(-x)= \widetilde{h}(x)$, and therefore $\widehat{h} = \check h$. To this end, we verify that the function $f(y):= \widehat{h}(-y)/\Psi(-y)$ satisfies the equation
$$ f(y+f(y))\ =\ \check{\Psi}(y+f(y)) f(y), \quad y\le 0. $$
Note that the function $x\mapsto x+\widehat{h}(x)$ is increasing on $[0, \infty)$ and tends to $\infty$ as $x\to \infty$. For each $y\le 0$, let $x=x(y)$ be the unique nonnegative number such that  $y= -(x+\widehat{h}(x))$.
From \eqref{eqn:dualFunctionalEquation}, we have $\widehat{h}(-y)=\Psi(-y) \widehat{h}(x)$, which implies $f(y)=\widehat{h}(-y)/ \Psi(-y)=\widehat{h}(x)$.
Next, observe that $y+ f(y)= y+ \widehat{h}(x)= -x$, by the definition of $y$. Therefore
$$ f(y+f(y))\ =\ f(-x)\ =\  \frac{\widehat{h}(x)}{ \Psi(x)}. $$
Using the facts that $\widehat{h}(x)= f(y)$ and $1/\Psi(x)=\check{\Psi}(-x)=  \check{\Psi}(y+f(y))$, we conclude that
$$ f(y+f(y))\ =\ \check{\Psi}(y+f(y)) \, f(y), $$
showing that $f$ is the critical curve associated with $\check{\Psi}$. This completes the proof.
\end{proof}

\begin{remark} \label{rem:dual}\rm
The above proof shows that if $(u_{n}, v_{n})_{n\ge 0}$ is a solution of \eqref{eqn:recursiveEquationReducedB} satisfying $v_{0}\ge 0$ and $u_{n}= \check{h}(v_{n})$ for all $n\ge 0$, then for any $N \ge 2$, the dual system defined by
$$ (\check{u}_{n},\check{v}_{n})\ :=\ (u_{N-n},- v_{N-n+1})\quad\text{for }0\le n \le N, $$
is also a solution to \eqref{eqn:recursiveEquationReducedB}, but with $\check{\Psi}$ in place of $\Psi$. Moreover, we have $\check{u}_{n}= \widetilde h(\check{v}_{n})$ for $0\le n \le N$, where $\widetilde h(x):= \check{h}(-x) \check{\Psi}(x)$. In other words, the dual system $(\check{u}_{n},\check{v}_{n})_{0\le n \le N}$ moves along the critical curve associated with $\check{\Psi}$. This observation will be helpful in the proof of Lemma \ref{lem:dual}.
\end{remark}

The regularity of the critical curve $h$ will play a crucial role in the proof of the Derrida-Retaux conjecture in the nearly supercritical regime. In this section, we prove that the function $h$ is $\mathcal{C}^{1}$ and convex in a neighborhood of $0$, with a Lipschitz first derivative. Similarly to the previous section, we establish this result for the function $g$ satisfying \eqref{gxN}, by analyzing its approximation sequence.

\begin{lemma}\label{l:reg-g-min}
Assume \eqref{ass:psi}, and let $g$ be the unique nontrivial solution to \eqref{gxN}. For all $b>1$, there exists $\eta>0$ such that $g$ is convex and $\mathcal{C}^{1}$ on $[-\eta,0]$, with $g'$ being $b$-Lipschitz on $[-\eta,0]$.
\end{lemma}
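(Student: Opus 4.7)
The plan is to work with the approximating sequence $(g_n)_{n\ge 1}$ constructed in the proof of Proposition~\ref{p:g} and establish uniform $\mathcal{C}^2$ bounds on $[-\eta,0]$ for $\eta>0$ chosen small enough in terms of $b$, before passing to the limit. Recall that each $g_n$ is $\mathcal{C}^2$, nondecreasing, $1$-Lipschitz, with $g_n(0)=0$ and $g_n'(0)=1$.

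As a preliminary normalisation, I would differentiate the recurrence \eqref{gn+1'} twice and evaluate at $x=0$: using $g_n(0)=0$, $g_n'(0)=1$, $\Psi(0)=\Psi'(0)=1$, an easy induction yields $g_n''(0)=1$ for every $n\ge 1$, the base case $g_1''(0)=\Psi'(0)\Psi(0)=1$ coming directly from $g_1'=\Psi\circ g_1$. The core of the argument is then to show by induction on $n$ that $0\le g_n''(x)\le b$ throughout $[-\eta,0]$. Differentiating \eqref{gn+1'} twice produces a recursion of the form
\[
(K+1)\,g_{n+1}''(x)\;=\;g_n''(g_n(x))\,g_n'(x)^2\;+\;g_n''(x)\,A_n(x)\;+\;B_n(x),
\]
where $A_n$ and $B_n$ are explicit expressions in $g_n,g_n',\Psi,\Psi',\Psi''$, satisfying $A_n(0)=K$ and $B_n(0)=0$. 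Since $g_n(x)\in[x,0]\subset[-\eta,0]$ and $g_n'(x)\in[0,1]$, the inductive hypothesis applies at both $x$ and $g_n(x)$. The crucial point is to exploit the Taylor expansions near $0$, uniform in $n$: from \eqref{gtaylor} and the induced control $g_n'(x)=1+x+O(x^2)$, one checks that $A_n(x)-K=O(x^2)$ and $B_n(x)=-2x+O(x^2)$, whereas $g_n'(x)^2=1+2x+O(x^2)$. The $+2x$ contribution from $g_n'(x)^2$ and the $-2x$ contribution from $B_n(x)$ cancel exactly, and the overall RHS admits a uniform expansion of the form $(K+1)+(K+1)\cdot c\,x+O(x^2)$ for some explicit constant $c$ depending only on $\Psi$. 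Taking $\eta$ small enough to absorb the $O(x^2)$ remainder into the slack $b-1$ then closes the induction.

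Finally, the passage to the limit is routine. The uniform bound $g_n''\le b$ on $[-\eta,0]$ yields equicontinuity of $(g_n')$, so Arzel\`a--Ascoli combined with the monotone convergence $g_n\uparrow g$ (Proposition~\ref{p:g}) gives $g_n'\to g'$ uniformly on $[-\eta,0]$; the Lipschitz constant $b$ passes to the limit, and convexity of $g$ follows from $g_n''\ge 0$ together with pointwise convergence.

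The hardest part will be closing the inductive $\mathcal{C}^2$ bound. A naive absolute-value estimate does not close, because the effective multiplier of $g_n''$ in the recursion is $g_n'(x)^2+A_n(x)=(K+1)+O(|x|)$, which exceeds $K+1$ by a term of order $|x|$; iterating this loss compounds and gives no uniform bound. The rescue comes precisely from the linear-in-$x$ cancellation between $g_n'(x)^2$ and $B_n(x)$ identified above, which demands careful bookkeeping of the relevant Taylor expansions uniformly in $n$, and a bootstrap argument to legitimise the use of $g_n'(x)=1+x+O(x^2)$ at each inductive step.
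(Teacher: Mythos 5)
Your high-level plan --- induct on a $\mathcal{C}^2$ bound for the approximating sequence $(g_n)$ from Proposition~\ref{p:g}, then pass to the limit via Arzel\`a--Ascoli --- is exactly the paper's, and the passage to the limit is handled correctly. The genuine gap is in the inductive hypothesis. You propose to show $0\le g_n''(x)\le b$ on $[-\eta,0]$ and close the induction by a Taylor cancellation that uses $g_n'(x)=1+x+O(x^2)$ uniformly in $n$; but the lower bound $g_n''\ge 0$ only gives $g_n'(x)\in[1+bx,\,1]$, i.e.\ $g_n'(x)=1+O(|x|)$ with a constant that can be anywhere in $[0,b]$, not $1+x+O(x^2)$. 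Knowing $g_n''(0)=1$ does not help either, because the modulus of continuity of $g_n''$ at $0$ is not controlled uniformly in $n$ --- that is precisely what you are trying to prove. You flag this as a bootstrap to be supplied, but the bootstrap \emph{is} the missing idea, and without it the cancellation has no legal basis.

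The paper resolves this by strengthening the inductive hypothesis to the two-sided bound $g_n''(x)\in[a,b]$ with $a=2-b$ and $b\in(1,4/3)$. Integrating gives $g_n'(x)\in[1+bx,\,1+ax]$, a genuine uniform ``$g_n'\approx 1+x$'' estimate since both $a,b$ are close to $1$. It then does not argue by matching first-order Taylor coefficients, but instead bounds the twice-differentiated recursion by explicit functions $P_n$ (upper) and $Q_n$ (lower) and shows they are respectively nondecreasing and nonincreasing on $[-\eta,0]$, so their extrema on that interval are attained at $x=0$ where $P_n(0)=(K+1)b$ and $Q_n(0)=(K+1)a$. The monotonicity hinges on the sign of $3ab-2a-b=(b-1)(4-3b)$, which is why $b<4/3$ is imposed (and which suffices, since $b$-Lipschitz for small $b$ implies it for large $b$). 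If you want to keep your cancellation heuristic, you must in any case carry a lower bound $g_n''\ge a$ with $a$ close to $1$ through the induction; once you have that, your cancellation observation is essentially a first-order shadow of the paper's monotonicity argument.
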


\begin{proof}
Let $(g_{n})_{n\ge 1}$ be the sequence of functions defined inductively by \eqref{gn+1'} in the proof of Proposition~\ref{p:g}. Recall that $g_1$ is the solution of the equation $y'=\Psi(y)$ with initial condition $y(0)=0$. Therefore, $g_1$ is convex and $\mathcal{C}^2$, with $g_1(0)=0$ and $g_1'(0)=g_1''(0)=1$.

Let $b \in (1,4/3)$ and set $a=2-b \in (2/3,1)$. We will prove by induction that there exists $\eta>0$ such that
\begin{equation}\label{eqn:inducAim}
g_{n}''(x)\,\in\, [a,b] \quad \text{for all }x \in [-\eta,0].
\end{equation}
We first choose  $\delta>0$  such that $g_1'(x) \in [a,b]$ for all $x \in [-\delta,0]$. The value of $\eta\le\delta$ will be determined later. Additionally, define $C=\sup_{x \in [-\delta,0]} |\Psi''(x)|$. If necessary, we reduce $\delta$ so that $C\delta<1$.

Now assume that \eqref{eqn:inducAim} holds for some fixed $n\in\N$. By direct integration, we observe that $g_{n}'(x) \in [1+bx,1+ax]$ holds for all $x \in [-\eta,0]$. Moreover, since $|\Psi''|$ is bounded by $C$, we have
$$ \Psi'(x) \in [1+Cx,1-Cx]\quad\text{and}\quad\Psi(x) \in [1+ x-C x^2/2,1+x+C x^2/2]. $$
Next, differentiating \eqref{gn+1'} twice yields the equation
\begin{equation}\label{eqn:parts}
(K+1) g_{n+1}''(x)\ =\ g_{n}''(g_{n}(x))g_{n}'(x)^2+g_{n}'(g_{n}(x)) g_{n}''(x)+g_{n}''(x) U_{n}(x)+g_{n}'(x) V_{n}(x),
\end{equation}
where we have defined
\begin{align*}
  U_{n}(x) &:= K-\Psi(g_{n}(x))-( g_{n}(x)-x)\Psi'(g_{n}(x)),\\
  V_{n}(x) &:= 2 (1-g_{n}'(x)) \Psi'(g_{n}(x))-(g_{n}(x)-x) g_{n}'(x)\Psi''(g_{n}(x)).
\end{align*}

\textit{Bounding $U_{n}$}. We first bound $U_{n}$ using the known bounds for $\Psi$. We have:
\begin{align*}
U_{n}(x)\ &\le\ K-1-g_{n}(x)+\frac{C g_{n}(x)^2}{2}-(g_{n}(x)-x) (1+C g_{n}(x))\\
&\le\ K-1+x-2 g_{n}(x)+C x g_{n}(x),
\end{align*}
and similarly
$$ U_{n}(x)\ \ge\ K-1+x-2 g_{n}(x)-C x g_{n}(x). $$
Let us introduce $D>0$ such that for all $x \in [-\delta,0]$, we have
$$ C x g_{n}(x)\ \le\ C x g_1(x)\ \le\ D x^2. $$
Thus, we obtain
$$ K-1+x-2g_{n}(x)-D x^2\le  U_{n}(x)\le  K- 1+x-2 g_{n}(x)+ Dx^2. $$

\textit{Bounding $V_{n}$}. Next, we bound $V_{n}(x)$ in a similar manner, using the bounds on $\Psi'$, $\Psi''$. We write:
$$  2(1 -g_{n}'(x)) (1+C g_{n}(x))-\frac{C b x^2}{2}\ \le\ V_{n}(x)\ \le\  2 (1-g_{n}'(x))(1-C g_{n}(x))+\frac{C b x^2}{2}. $$
Since $1-g_{n}'(x) \in [-ax,-bx]$ and $g_{n}(x) \in [x+ax^2/2,x+bx^2/2]$, we conclude that there exists $E>0$, depending only on $C,a,b$, such that
$$  2 (1-g_{n}'(x)) -E x^2\ \le\ V_{n}(x)\ \le\ 2 (1-g_{n}'(x))+E x^2. $$

\textit{Bounding $g''_{n+1}$ from above}. Since $g_{n}''(x)\le  b$, equation\eqref{eqn:parts} yields
\begin{multline*}
(K+1)g_{n+1}''(x)\ \le\ b g_{n}'(x)^2+b g_{n}'(g_{n}(x))\\
+b (K-1+x-2 g_{n}(x))+2 g_{n}'(x) (1-g_{n}'(x))+R x^2\ =:\ P_{n}(x)
\end{multline*}
where $R>0$ is a sufficiently large constant, depending only on $a,b,C,D$ and $E$.
  We observe that $P_{n}(0)=b(K+1)$, and that $P_{n}$ is non-increasing on $[-\eta_2,0]$ for some $\eta_2> 0$ small enough. To see this, we compute its derivative:
\begin{equation*}
  P_{n}'(x) = 2 b g_{n}'(x)g_{n}''(x)+b g_{n}''(g_{n}(x)) g_{n}'(x)+b (1-2 g_{n}'(x))+2 g_{n}''(x)(1-2 g_{n}'(x))+2 R x.
\end{equation*}
Using the inductive hypothesis that \eqref{eqn:inducAim} holds for $g_n''$, we have $g_n'(x) \in [1 + bx,1+ax]$ for all $x \in [-\eta,0]$, thus
\[
  P_{n}'(x) \ge 2 b g_{n}''(x)+b g_{n}''(g_{n}(x))-b-2 g_{n}''(x)+S x,
\]
where $S$ is a constant depending on $R,a,b$. We rewrite this inequality as
\begin{equation*}
P_{n}'(x)\ge 2(b-1)g_{n}''(x)+ab-b+ S x\ge 2 (b-1)a+ab-b+S x\ \ge\ 3ab-2 a-b+S x.
\end{equation*}
Since $a=2-b$ and $b \in (1,4/3)$, we have $3a b-2 a-b=(b-1)(4-3b)>0$. Thus, we can choose $\eta_2>0$ small enough, depending only on $a,b, C,D$ such that $3 a b-2 a-b+S x\ge 0$ for all $x \in [-\eta_2,0]$. We conclude that $P_{n}$ is nondecreasing on $[-\min(\eta, \eta_2),0]$, and recall that $P_{n}(0)=(K+1)b$. Therefore, for all $x \in [-\min(\eta,\eta_2),0]$, we have
$$  (K+1)g_{n+1}''(x)\ \le\ P_{n}(x)\ \le\ P_{n}(0)\ =\ (K+1)b. $$

\textit{Bounding $g''_{n+1}$ from below}. The lower bound is treated in the same manner for $x \in [-\eta,0]$. Using $g''(x)\ge a$, we obtain from \eqref{eqn:parts} the following lower bound for $g_{n+1}''$
\begin{multline*}
(K+1) g_{n+1}''(x)\ \ge\ a g_{n}'(x)^2+a g_{n}'(g_{n}(x))\\
+a (K-1+x-2 g_{n}(x))+2 g_{n}'(x)(1-g_{n}'(x))-Rx^2\ =:\ Q_{n}(x),
\end{multline*}
where $R>0$ is a sufficiently large constant, depending only on $a,b,D$ and $E$. We note that $Q_{n}(0)=a(K+1)$. Next, we compute the derivative of $Q_{n}(x)$:
\begin{gather*}
Q_{n}'(x) = 2a g_{n}'(x) g_{n}''(x)+ag_{n}''(g_{n}(x))g_{n}'(x)+a(1-2 g_{n}'(x))+2 g_{n}''(x)(1-2 g_{n}'(x))-2 R x.
\intertext{Using again that $g_{n}'(x) \in [1 +bx,1+ax]$, we can bound this expression, namely}
Q_{n}'(x) \le 2a g_{n}''(x)+a g_{n}''(g_{n}(x))-a-2 g_{n}''(x)-S x,
\end{gather*}
with a constant $S$ depending on $R,a,b$. Since $g''_{n}(g_{n}(x))\le  b$ and $g''_{n}(x)\ge a$ we get
\begin{align*}
Q_{n}'(x) 
\ \le\ 2 (a-1) a+a b-a-Sx\le  a(2 a+b-3)-Sx
\end{align*}
Using $a=2 -b$, we find that $a(2 a+b-3)=(2-b)(1-b) <0$ for all $b  \in (1,2)$. Therefore, we can choose $\eta_{3}>0$ small enough, again only depending on $a,b,C,D$ such that $Q_{n}'(x)\le 0$ for all $x \in [-\min(\eta, \eta_{3}),0]$. We conclude that $Q_{n}$ is nonincreasing on $[-\min(\eta,\eta_{3}),0]$, and thus
$$ (K+1)g_{n+1}''(x)\ge Q_{n}(x)\ge Q_{n}(0)=(K+1)a. $$

Finally, fixing $\eta < \min(\eta_2,\eta_3)$, we observe that we have proved \eqref{eqn:inducAim} for $g''_{n+1}$.

\textit{Convexity of $g$}. Now, since \eqref{eqn:inducAim} holds for all $n\in\N$, the sequence $(g_{n}')_{n\ge 1}$ forms a family of continuous, increasing, and $b$-Lipschitz functions on $[-\eta,0]$ . By the Arzel\`a-Ascoli theorem, we can therefore extract a subsequence $(g_{n_{k}}')_{k\ge 1}$ that converges pointwise to a continuous, increasing, and $b$-Lipschitz limit function $f$.
Using the dominated convergence theorem, we then obtain
$$  g(x)=- \lim_{k\to\infty}  \int_{x}^{0} g'_{n_{k}}(y)\ \dd y\ =\ -\int_x^0 f(y)\ \dd y. $$
This shows that $g$ is $\mathcal{C}^{1}$ on $[-\eta,0]$ with $g'=f$. As $g'$ is increasing, we conclude that $g$ is convex. This completes the proof.
\end{proof}

\subsection{The critical regime}
\label{subsec:criticalEvolution}

In this subsection, we prove Theorem~\ref{thm:criticalEvolution}. For a fixed initial condition $(u_{0},v_{0})\in\mathcal{C}$, we examine the precise asymptotic behavior of $(u_{n},v_{n})$ as $n\to\infty$.

\begin{proof}[Proof of Theorem~\ref{thm:criticalEvolution}]
Let $v_{0}<0$, and fix $u_{0}=h(v_{0})$. Let $(u_{n},v_{n})_{n\ge 0}$ the solution to \eqref{eqn:recursiveEquationReducedB}. From Proposition~\ref{p:g}, we know that $u_{n}=h(v_{n})$ for all $n\ge 1$, and that
$$ \lim_{n\to\infty} u_{n}\ =\ \lim_{n\to\infty}v_{n}\ =\ 0. $$
Therefore, using the asymptotic relation $h(x)\sim x^2/2$ as $x\uparrow 0$, we immediately obtain $u_{n}\sim 2/n^2$ once we establish that $v_{n}\sim -2/n$.

To this end, let $0<w_1<\frac{1}{2}<w_2$ and $\delta>0$ such that $w_1 x^2\le  h(x)\le  w_2 x^2$ for all $x \in [-\delta,0]$. Thus, for sufficiently large $n$, we have
$$ v_{n+1}\,=\,v_{n}+u_{n}\,=\,v_{n}+h(v_{n})\,\in\, [v_{n}+w_1 v_{n}^2,v_{n}+w_2 v_{n}^2], $$
which implies the inequality
$$ \frac{1}{v_{n+1}}\ \le\ \frac{1}{v_{n}(1+w_1v_{n})}\ \le\ \frac{1}{v_{n}}\left(1-w_1 v_{n} \right)\ =\ \frac{1}{v_{n}}-w_1, $$
using $\frac{1}{1+x}\ge 1-x$ for all $x>-1$. Hence, we deduce that
$$ \limsup_{n\to\infty} \frac{1}{n v_{n}}\ \le\ -w_1. $$
Similarly, for $w_2'>w_2$, there exists an $n$ large enough such that
$$ \frac{1}{v_{n+1}}\ \ge\ \frac{1}{v_{n}(1+w_2 v_{n})}\ \ge\ \frac{1}{v_{n}}-w_2', $$
which gives $\liminf_{n\to\infty}1/nv_{n}\ge-w_2'$. We conclude that $\lim_{n\to\infty} nv_{n}=-2$, completing the proof.
\end{proof}

We conclude this section by applying the duality relationship to analyze the time it takes for the sequence $(u_{n},v_{n})$ to evolve from the first time when $v_{n}\ge 0$ to the first time when $u_{n}\ge 1$. For all initial points $(u_{0},v_{0})\in\mathcal{P}$, we define
\begin{equation}\label{eqn:defn*}
n_{*} =n_{*}(u_{0}, v_{0}):= \inf\{n\ge 0 : v_{n}\ge 0\text{ and }u_{n}\ge 1\}.
\end{equation}

\begin{lemma}
\label{lem:dual}
Let $v_{0}<0$ and $u_{0}> h(v_{0})$.  Defining $N_{0}':=\inf\{n\ge 0: v_{n}\ge  0\}$, there exists a positive constant $c>0$ such that
$$ (n_{*}-k)_+\,\le\,\frac{c}{v_{k}}\quad\text{for any }k>N_{0}'. $$
\end{lemma}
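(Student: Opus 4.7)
The plan is to combine the duality of Proposition~\ref{prop:duality} with the critical asymptotics of Theorem~\ref{thm:criticalEvolution}, both applied to the dual dynamics governed by $\check{\Psi}(x):=1/\Psi(-x)$. I may assume $n_{*}>k$, as otherwise the bound is trivial. Setting $N:=n_{*}$, I introduce the dual sequence $\check{u}_m := u_{N-m}$ and $\check{v}_m := -v_{N-m+1}$ for $0\le m\le N+1$, which by Proposition~\ref{prop:duality} satisfies \eqref{eqn:recursiveEquationReducedB} with $\Psi$ replaced by $\check{\Psi}$.

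The crucial observation is the direct identity
\[
\check{v}_{n_{*}+1} \;=\; \check{v}_{n_{*}}+\check{u}_{n_{*}} \;=\; -v_{1}+u_{0} \;=\; -v_{0} \;>\;0,
\]
which uses only the initial data $(u_{0},v_{0})$ of the original trajectory. Since $\check{v}_{n_{*}+1}>0$, Lemma~\ref{lem:limits} applied to the (extended) dual recursion gives $\check{v}_m\to\infty$, and Theorem~\ref{thm:criticalCurve} for $\check{\Psi}$ then shows $(\check{u}_{0},\check{v}_{0})\in\mathcal{P}(\check{\Psi})$, i.e.\ $\check{u}_{0}>\tilde{h}(\check{v}_{0})$, where $\tilde{h}$ denotes the critical curve of $\check{\Psi}$.

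I then compare $(\check{u}_{m},\check{v}_{m})$ with the critical trajectory $(\tilde{u}_{m},\tilde{v}_{m})$ of $\check{\Psi}$ starting at $\tilde{v}_{0}=\check{v}_{0}$, $\tilde{u}_{0}=\tilde{h}(\tilde{v}_{0})$: by Lemma~\ref{lem:monotonicity} this yields $\check{v}_{m}\ge\tilde{v}_{m}$ for every $0\le m\le N$. Theorem~\ref{thm:criticalEvolution} applied to $\check{\Psi}$ gives $\tilde{v}_{m}\sim -2/m$, hence a constant $C>0$ (depending on $\tilde{v}_{0}=-v_{n_{*}+1}$, and therefore on the trajectory) with $-\tilde{v}_{m}\le C/m$ for all $m\ge 1$. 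Combining and specialising to $m=n_{*}-k+1\in[1,N]$, together with $\check{v}_{n_{*}-k+1}=-v_{k}$, yields $v_{k}\le C/(n_{*}-k+1)$, i.e.\ $n_{*}-k\le C/v_{k}$.

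The main conceptual step is the identity $\check{v}_{n_{*}+1}=-v_{0}>0$, which places the dual trajectory automatically in the supercritical domain of $\check{\Psi}$ without any further analysis and allows the previously established theory to apply wholesale. The only technical caveat is that $\check{\Psi}$ need not satisfy the boundedness part of \eqref{ass:psi} globally when $\Psi(-\infty)=0$; however, the relevant $\check{v}_{m}$ lie in the bounded interval $[-v_{n_{*}+1},-v_{0}]$, and the local theory furnished by Proposition~\ref{p:g} and its consequences applies on this range without modification.
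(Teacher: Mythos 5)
Your reorganization of the duality argument is conceptually appealing: rather than building an auxiliary trajectory on the dual critical curve $\check{h}$ as the paper does, you reverse the original trajectory directly, observe the identity $\check{v}_{n_*+1}=-v_0>0$, and compare with the critical trajectory of $\check{\Psi}$. The bookkeeping ($\check{v}_{n_*-k+1}=-v_k$, the monotonicity comparison via Lemma~\ref{lem:monotonicity}, and the local applicability of the theory for $\check{\Psi}$) is all correct.

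However, there is a genuine gap concerning the constant $C$. Your bound $-\tilde{v}_m\le C/m$ comes from Theorem~\ref{thm:criticalEvolution} applied to the dual critical trajectory started at $\tilde{v}_0=\check{v}_0=-v_{n_*+1}$, and as you say yourself, $C$ depends on $v_{n_*+1}$, hence on the full trajectory. If the constant in the lemma is allowed to depend arbitrarily on $(u_0,v_0)$, the statement is vacuous (there are finitely many $k$ with $N_0'<k<n_*$ and each has $v_k>0$). And in the one place the lemma is invoked, in Step~3 of the proof of Theorem~\ref{thm:main} to control $\varepsilon^{1/2}(n_*-n^{(2)}_A)$ as $\varepsilon\to 0$, a constant that is bounded uniformly as $u_0\downarrow h(v_0)$ is essential. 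You do not argue that $v_{n_*+1}$ stays bounded, and this is precisely the nontrivial content of the lemma.

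The paper circumvents this by first showing $u_n>\check{h}(v_n)$ for $n\ge N_0'$, and then running an auxiliary trajectory \emph{on} the dual critical curve $\check{h}$ started at $(\check{h}(v_k),v_k)$. When this auxiliary trajectory first reaches $u\ge 1$, its $v$-coordinate at that step is bounded by a constant depending only on $\Psi$, because $\check{h}(x)\to\infty$ as $x\to\infty$; this produces a starting point for the dualized critical system that is uniformly bounded, and hence a constant $c$ depending only on $\Psi$. Your proof can be repaired along the same lines: establish $u_n>\check{h}(v_n)$ for $n\ge N_0'$ by induction, observe that at step $n_*-1$ one has $u_{n_*-1}<1$ (the case $v_{n_*-1}<0$ is void since it forces $n_*=N_0'\le k$), so $\check{h}(v_{n_*-1})<1$ and $v_{n_*-1}$ is bounded, whence $v_{n_*}$ and then $v_{n_*+1}=v_{n_*}+u_{n_*}\le v_{n_*}+\Psi(\infty)$ are bounded by constants depending only on $\Psi$. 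With that added, your constant $C$ becomes uniform and the proof closes.
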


\begin{proof}
Recall the function $\check{h}$ from Corollary~\ref{cor:dualCriticalCurve}. We begin by verifying that $u_{N_{0}'}>\check{h}(v_{N_{0}'})$.  If $v_{N_{0}'}=0$, this is trivial since $\check{h}(0)=0$. If $v_{N_{0}'}>0$, there exists a unique $x>0$ such that $v_{N_{0}'}=x+ \check{h}(x)$. By definition, we have
$$ v_{N_{0}'}\,=\,v_{N_{0}'-1}+u_{N_{0}'-1}\,<\,u_{N_{0}'-1}, $$
which implies $u_{N_{0}'-1}> \check{h}(x)$ and
$$ u_{N_{0}'}\,=\,u_{N_{0}'-1} \Psi(v_{N_{0}'})\,>\,\check{h}(x)\Psi(x+ \check{h}(x))\,=\,\check{h}(x+ \check{h}(x))\,=\,\check{h}(v_{N_{0}'}). $$
By induction, we get that $u_{n}>\check{h}(v_{n})$ for any $n\ge N_{0}'$.

Now consider $k>N_{0}'$. If $u_{k} \ge 1$, $n_{*}\le k$ and there is nothing to prove. Otherwise, assume that $u_{k}<1$, thus $\check{h}(v_{k})<1$. Then $v_{k}$ is bounded from above, because $\check{h}(x)\to \infty$ as $x\to\infty$.

We now examine the system $(\widetilde{u}_{j}^{*}, \widetilde{v}_{j}^{*})_{j\ge 0}$, where $(\widetilde{u}^{*}_{0},\widetilde{v}^{*}_{0})=(\check{h}(v_{k}),v_{k})$ and
$$ (\widetilde{u}_{j}^{*},\widetilde{v}_{j}^{*})\,=\,(\check{h}(\widetilde{v}_{j}^{*}),\widetilde{v}^{*}_{j-1}+ \widetilde{u}^{*}_{j-1})\quad\text{for }j\ge 1. $$
This system satisfies the recursive equation \eqref{eqn:recursiveEquationReducedB} with the same function $\Psi$. 
An induction shows that $u_{j+k}\ge\widetilde{u}_{j}^{*}$ and $v_{j+k} \ge \widetilde{v}_{j}^{*}$ for all $j\ge 0$. Therefore, $n_{*}-k \le \underline{n}_{k}$, where
$$ \underline{n}_{k}  := \inf\{j \ge 0:  \widetilde{u}_{j}^{*}\ge 1\}. $$
To complete the proof, it therefore suffices that ${\underline{n}_{k}}\le c/v_{k}$ for some positive constant $c$. The idea is to apply the duality in Proposition~\ref{prop:duality} to
$$ (\check u_{j}, \check v_{j})_{0\le j\le \underline{n}_{k}}\ :=\ (\widetilde{u}^{*}_{\underline{n}_{k}-j}, -\widetilde{v}^{*}_{\underline{n}_{k}-j+1})_{0\le j \le \underline{n}_{k}}. $$
By Remark \ref{rem:dual}, the system $(\check u_{j}, \check v_{j})_{j\ge 0}$ is a solution to  \eqref{eqn:recursiveEquationReducedB} with $\check{\Psi}(x):= 1/\Psi(-x), x\in \R,$ replacing $\Psi$. Moreover,  $\underline{n}_{k}$ is the time it takes for this system to evolve along its critical curve from the initial position $(\widetilde{u}^{*}_{\underline{n}_{k}}, - \widetilde{v}^{*}_{\underline{n}_{k}+1})$  to $(\widetilde{u}^{*}_{0}, -\widetilde{v}^{*}_1)$.

Next, observe that $\widetilde{v}^{*}_1 > \widetilde{v}^{*}_{0}= v_{k}$. If we can show that $\widetilde{v}^{*}_{\underline{n}_{k}+1}\le c'$ for some positive~constant $c'$, then by the monotonicity in the starting point (see Lemma \ref{lem:monotonicity}), we have
$$ \underline{n}_{k}\ \le\ \inf\{j\ge 0: \check{v}_{j} \ge -v_{k}\}, $$
where $(\check u_{j}, \check v_{j})_{j\ge 0}$ now lies  on the critical curve with $\check{v}_{0}=-c'$. We obtain  ${\underline{n}_{k}}\le c/v_{k}$ for some positive constant $c$ by applying Theorem~\ref{thm:criticalEvolution} to the critical system $(\check u_{j}, \check v_{j})_{j\ge 0}$.

We are thus left with proving that $\widetilde{v}^{*}_{\underline{n}_{k}+1}\le c'$. Since $\widetilde{v}^{*}_{\underline{n}_{k}+1}= \widetilde{v}^{*}_{\underline{n}_{k}} +\check{h}(\widetilde{v}^{*}_{\underline{n}_{k}})$, it suffices to show that $\widetilde{v}^{*}_{\underline{n}_{k}}$ is bounded from above. By the definition of $\underline{n}_{k}$, we have $\check{h}(\widetilde{v}^{*}_{\underline{n}_{k}-1})= \widetilde{u}^{*}_{\underline{n}_{k}-1}$ $<1$. Since $\check{h}(x)\to \infty$ as $x\to \infty$, it follows that $\widetilde{v}^{*}_{\underline{n}_{k}-1}$ must be bounded from above. Furthermore, note that
$$ \widetilde{v}^{*}_{\underline{n}_{k}}\,=\,\widetilde{v}^{*}_{\underline{n}_{k}-1}+\widetilde{u}^{*}_{\underline{n}_{k}-1}\,<\,\widetilde{v}^{*}_{\underline{n}_{k}-1} +1. $$
Consequently, $\widetilde{v}^{*}_{\underline{n}_{k}}$ is bounded from above by a constant. This completes the proof.
\end{proof}

\section{The Derrida-Retaux conjecture}
\label{sec:DRconj}

The primary goal of this section is to prove Theorem~\ref{thm:main}, which establishes the Derrida-Retaux conjecture for the recursive equation \eqref{eqn:recursiveEquationReducedB}. This result, in turn, implies that the free energy of both solvable Derrida-Retaux models described in Section~\ref{sec:stochastique} undergoes an infinite-order BKT-type phase transition.

As a first step, we relate the free energy of $(u_{0},v_{0})$ to the number of steps $n_{*}$ required by the recursion \eqref{eqn:recursiveEquationReducedB} to bring $(u_{n},v_{n})$ into the domain $[1,\infty) \times \R_+$.

\begin{lemma}\label{lem:free}
Assume \eqref{ass:psi} and \eqref{ass:technical}. Then there exists a constant $c>0$ such that
$$ F(1,0)\Psi(\infty)^{-n_{*}}\ \le\ F(u_{0},v_{0})\ \le\ \max(u_{0},1)\Psi(\infty)^{-n_{*}+1}, $$
where $n_{*}$ is defined in \eqref{eqn:defn*}.
\end{lemma}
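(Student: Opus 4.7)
The plan is to exploit two observations: first, by Proposition~\ref{prop:freeEnergy}, $F(u_{0},v_{0})=\inf_{n}\Psi(\infty)^{-n}u_{n}$, so any choice of $n$ yields an upper bound; second, $F$ enjoys a shift invariance that follows immediately from its definition as a limit, namely
$$F(u_{n_{*}},v_{n_{*}})\,=\,\Psi(\infty)^{n_{*}}F(u_{0},v_{0}).$$
Together with the monotonicity Lemma~\ref{lem:monotonicity}, these reduce everything to controlling $u_{n_{*}-1}$ from above on one side, and comparing with the trajectory started from $(1,0)$ on the other. The case $(u_{0},v_{0})\notin\mathcal{P}$ is trivial since then $n_{*}=\infty$ and both sides vanish by Proposition~\ref{prop:freeEnergy}, so I will assume $(u_{0},v_{0})\in\mathcal{P}$ and hence $n_{*}<\infty$.

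For the \emph{upper bound}, if $n_{*}=0$ then $F(u_{0},v_{0})\le u_{0}\le\max(u_{0},1)\Psi(\infty)$, since $\Psi(\infty)\ge 1$. If $n_{*}\ge 1$, I apply $F(u_{0},v_{0})\le\Psi(\infty)^{-(n_{*}-1)}u_{n_{*}-1}$ and bound $u_{n_{*}-1}\le\max(u_{0},1)$ by splitting cases on the sign of $v_{n_{*}-1}$. If $v_{n_{*}-1}<0$, then $v_{j}\le v_{n_{*}-1}<0$ for every $j\le n_{*}-1$ since $(v_{n})$ is nondecreasing, so $\Psi(v_{j})\le\Psi(0)=1$ and the telescoping identity $u_{n_{*}-1}=u_{0}\prod_{j=1}^{n_{*}-1}\Psi(v_{j})$ gives $u_{n_{*}-1}\le u_{0}$. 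If $v_{n_{*}-1}\ge 0$, then by definition of $n_{*}$ as the minimal index where both $v_{n}\ge 0$ and $u_{n}\ge 1$ hold, necessarily $u_{n_{*}-1}<1$. Either way $u_{n_{*}-1}\le\max(u_{0},1)$, which yields the stated upper bound.

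For the \emph{lower bound}, the shift identity above gives $F(u_{0},v_{0})=\Psi(\infty)^{-n_{*}}F(u_{n_{*}},v_{n_{*}})$, and it suffices to show $F(u_{n_{*}},v_{n_{*}})\ge F(1,0)$. This follows directly from Lemma~\ref{lem:monotonicity}: since $u_{n_{*}}\ge 1$ and $v_{n_{*}}\ge 0$, the trajectory starting from $(u_{n_{*}},v_{n_{*}})$ dominates coordinate-wise the trajectory starting from $(1,0)$ (taking $\underline{\Psi}=\bar{\Psi}=\Psi$), so the corresponding $u$-sequences satisfy the same inequality and the free energies inherit it through the limit. Note that $F(1,0)>0$ under \eqref{ass:technical} since $(1,0)\in\mathcal{P}$ (indeed $v_{1}=1>0$, forcing $v_{n}\to\infty$ by Lemma~\ref{lem:limits}), which is what makes the lower bound nontrivial. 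No step presents a serious obstacle: the only subtlety is the case split in the upper bound, where one must use the correct direction of the definition of $n_{*}$ to conclude $u_{n_{*}-1}<1$ in the regime where $v_{n_{*}-1}\ge 0$.
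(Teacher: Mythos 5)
Your proof is correct and follows essentially the same route as the paper: the same two-case split for the upper bound (on whether $v_{n_*-1}<0$ or $u_{n_*-1}<1$), the same shift identity $F(u_0,v_0)=\Psi(\infty)^{-n_*}F(u_{n_*},v_{n_*})$, and the same appeal to monotonicity to conclude $F(u_{n_*},v_{n_*})\ge F(1,0)$. The only additions — explicitly disposing of the case $(u_0,v_0)\notin\mathcal{P}$ and spelling out the telescoping product for $u_{n_*-1}\le u_0$ — are harmless elaborations of what the paper leaves implicit.
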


Observe that if $n_{*}$ is large, then $\log F(u_{0},v_{0})$ is comparable to $n_{*} \log \Psi(\infty)$, up to a correction of order $O(1)$. Consequently, the estimate for the free energy $\log F(h(v_{0})+\epsilon,v_{0})$ as $\epsilon\to 0$ reduces to a proper estimate for the asymptotic behavior of $n_{*}=n_{*}(h(v_{0})+\varepsilon, v_{0})$ as $\epsilon\to 0$.

\begin{proof}
Recall that $u_{n}/\Psi(\infty)^{n}$ is nonincreasing. Therefore, we have two possible cases:
\begin{enumerate}\itemsep2pt
\item $n_{*} =0$ and $F(u_{0},v_{0})\le u_{0}$.
\item $n_{*}>0$ and
$$ F(u_{0},v_{0})\ \le\ \frac{u_{n_{*}-1}}{\Psi(\infty)^{n_{*}-1}}\ \le\ \max(1,u_{0}) \Psi(\infty)^{-n_{*}+1}. $$
\end{enumerate}
Here, we used that either $u_{n_{*}-1}<1$ or $v_{n_{*}-1} <0$. In the second case, $u$ is decreasing for $k<n_{*}$ and thus $u_{n_{*}-1}<u_{0}$.

Next, we establish a lower bound for $F(u_{0},v_{0})$. We observe that
$$ F(u_{0},v_{0})\ =\ \lim_{n\to\infty} \frac{u_{n}}{\Psi(\infty)^{n}}\ =\ \lim_{n\to\infty} \frac{u_{n+k}}{\Psi(\infty)^{n+k}}\ =\ F(u_{k},v_{k}) \Psi(\infty)^{-k}. $$
Hence,
$$ F(u_{0},v_{0})\ \ge\ \Psi(\infty)^{-n_{*}} F(u_{n_{*}},v_{n_{*}})\ \ge\ \Psi(\infty)^{-n_{*}} F(1,0), $$
where the final inequality follows from the fact that $F$ is nondecreasing with respect to both $u$ and $v$, as stated in Lemma~\ref{lem:monotonicity}.
\end{proof}

Let $(u^{(\varepsilon)}_{n},v^{(\varepsilon)}_{n})$ be a solution of \eqref{eqn:recursiveEquationReducedB}, where $v^{(\varepsilon)}_{0}=v_{0} \le 0$ is fixed and $u^{(\varepsilon)}_{0}=h(v_{0})+\varepsilon$. We define
\begin{equation}
N_{0}^{(\varepsilon)}\ =\ N_{0}(u_{0}^{(\varepsilon)}, v_{0}^{(\varepsilon)})\ :=\ \max\{n\in\N : v^{(\varepsilon)}_{n} \le 0\}, \label{def-N0}
 \end{equation}
and observe that $u^{(\varepsilon)}_{N_{0}^{(\varepsilon)}} =\inf_{n\in\N} u^{(\varepsilon)}_{n}$, because $u^{(\varepsilon)}_{n+1}/u^{(\varepsilon)}_{n}=\Psi(v^{(\varepsilon)}_{n+1})\le  1$ if $n+1 \le N_{0}^{(\varepsilon)}$ and $u^{(\varepsilon)}_{n+1}/u^{(\varepsilon)}_{n}=\Psi(v^{(\varepsilon)}_{n+1})>1$ if $n+1>N_{0}^{(\varepsilon)}$.

The proof of Theorem~\ref{thm:main} will proceed in three main steps. First, for all $v_0\le 0$, we establish the existence of $c_{\star}(v_0)\ge 1$ such that
\begin{equation}\label{c-star}
\lim_{\varepsilon\to 0}\frac{1}{\varepsilon} u^{(\varepsilon)}_{N_{0}^{(\varepsilon)}}\ =\ c_{\star}(v_0).
\end{equation}

Note that   if $v_0=0$, then $N_0^{(\varepsilon)}=0$ by definition. Hence $u_0^{(\varepsilon)}=\varepsilon$, and \eqref{c-star} holds trivially with $c_\star(0)=1$.  For $v_0<0$, \eqref{c-star} constitutes a  critical step that allows us to relate the distance to the critical curve at the initial point of the evolution to its distance at the minimum, where the analysis is simpler (since the critical point is 0 at that stage).

For $A>0$, we observe that $n_{*}$, defined in \eqref{eqn:defn*}, can be decomposed as
$$ n_{*}\ =\ n^{(1)}_{A}+(n^{(2)}_{A}-n^{(1)}_{A})+(n_{*}-n^{(2)}_{A}), $$
where $n^{(1)}_{A}$ and $n^{(1)}_{A}$ are defined as
\begin{equation}\label{def-n1An2A}
  n^{(1)}_{A}\,:=\,\inf\{ n\in\N : v^{(\epsilon)}_{n}>-A \epsilon^{1/2} \}\quad\text{and}\quad
  n^{(2)}_{A}\,:=\,\inf\{ n\in\N : v^{(\epsilon)}_{n}> A \epsilon^{1/2} \}.
\end{equation}
The second step is to show that
\begin{align}\label{n2A-n1Aa}
     &\lim_{A\to\infty}\,\limsup_{\varepsilon\to 0}\, \big|(c_{\star}(v_0) \varepsilon)^{1/2}\left(n^{(2)}_{A}-N_0^{(\varepsilon)}\right) - \frac{\pi}{\sqrt{2}}\big|\ =\ 0,
\\
& \lim_{A\to\infty}\,\limsup_{\varepsilon\to 0}\, \big|(c_{\star}(v_0) \varepsilon)^{1/2}\left(N_0^{(\varepsilon)}-n^{(1)}_{A}\right) - \frac{\pi}{\sqrt{2}}\big|\ =\ 0 \qquad \text{if $v_0<0$.}\label{n2A-n1Ab}
\end{align}

\noindent This will rely on the fact that, on this time-interval, $(u_{n}^{(\epsilon)},v_{n}^{(\epsilon)})$ is well-approximated by an Eulerian scheme for the function $\tan$, with an initial condition given by \eqref{c-star}. Details will follow later, see \eqref{eq xvarepsilon}--\eqref{ODE solution}. Note that   if $v_0=0$, then $n^{(1)}_{A}=0$ by definition; hence the assumption $v_0<0$ in \eqref{n2A-n1Ab} is necessary.

Finally, we will prove that
\begin{equation}
\lim_{A\to\infty} \limsup_{\epsilon\to 0} \epsilon^{1/2}\left(n^{(1)}_{A}+(n_{*}-n^{(2)}_{A})\right)=0, \label{n1An2A}
\end{equation}
and thus show that $n_{*}$ is well-approached by $n^{(2)}_{A}-n^{(1)}_{A}$, for sufficiently  large $A$. The proof of Theorem~\ref{thm:main} will then be completed by applying Lemma~\ref{lem:free}.

The proofs of \eqref{c-star}, \eqref{n2A-n1Aa}--\eqref{n2A-n1Ab}, and \eqref{n1An2A} are provided in the following three steps. For simplicity, we will omit the superscript $(\varepsilon)$ in $u_{n}$, $v_{n}$, and $N_{0}$ where there is no risk of confusion.

\begin{proof}[Step 1: Proof of \eqref{c-star}] Only the case $v_0<0$ needs to be considered. We begin by proving a lemma that enables us to restrict our attention to the case where $v_{0}$ lies in any neighbourhood of $0$. For $\delta>0$, we define
\begin{equation}\label{def-n1delta}
n^{(3)}_{\delta}\,=\,n^{(3)}_{\delta}(u_{0}, v_{0})\,:=\,\inf\{n\in\N:v_{n}> -\delta\} .
\end{equation}

\begin{lemma}
\label{lem:easyStep} Assume \eqref{ass:psi}, and let $v_{0}<0$ and $u_{0}= h(v_{0})+\varepsilon$. Then
$$ \limsup_{\epsilon\to 0} n^{(3)}_{\delta}<\infty\quad\text{for any }\delta \in (0, |v_0|). $$
Moreover, for any sufficiently small $\delta>0$, there exists a constant $C=C(\delta)>0 $ such that
$$ u_{n^{(3)}_{\delta}}-h(v_{n^{(3)}_{\delta}})\ \sim\ C\varepsilon\quad\text{as }\varepsilon\to 0. $$
\end{lemma}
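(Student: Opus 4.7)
The plan is to compare the perturbed trajectory $(u_n,v_n)$ with the critical trajectory $(u_n^{\ast},v_n^{\ast})_{n \ge 0}$ starting from $(h(v_0),v_0)$, which by Proposition~\ref{p:g} satisfies $u_n^{\ast}=h(v_n^{\ast})$ and $v_n^{\ast}\uparrow 0$. Boundedness of $n^{(3)}_{\delta}$ is then immediate: pick the smallest $N=N(\delta)$ with $v_N^{\ast}>-\delta$; Lemma~\ref{lem:monotonicity}, applied to the larger initial datum $u_0=h(v_0)+\varepsilon>u_0^{\ast}$, yields $v_n\ge v_n^{\ast}$ for every $n$, so $n^{(3)}_{\delta}\le N$ uniformly in $\varepsilon>0$. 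By the same monotonicity and continuity in $\varepsilon$, for every $\delta>0$ there exists $N_{\delta}\in\{N-1,N\}$ such that $n^{(3)}_{\delta}=N_{\delta}$ for all sufficiently small $\varepsilon>0$.

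\medskip

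For the sharp asymptotics, I linearize the recursion around the critical trajectory. Setting $\epsilon_n:=u_n-u_n^{\ast}$ and $\eta_n:=v_n-v_n^{\ast}$ gives $\epsilon_0=\varepsilon$, $\eta_0=0$, and a straightforward induction on $n\in\{0,\dots,N\}$ (using that $\Psi$ is bounded and $\mathcal{C}^2$) shows $\epsilon_n,\eta_n=O(\varepsilon)$ and, after a second-order Taylor expansion of $\Psi$,
$$ \eta_{n+1}=\eta_n+\epsilon_n,\qquad \epsilon_{n+1}=\Psi(v_{n+1}^{\ast})\epsilon_n+u_n^{\ast}\Psi'(v_{n+1}^{\ast})\eta_{n+1}+O(\varepsilon^2). $$
Next, I choose $\delta$ small enough that $h$ is $\mathcal{C}^1$ on $[-\delta,0]$ with Lipschitz-continuous derivative, as guaranteed by Lemma~\ref{l:reg-g-min}, and introduce $\phi_n:=\epsilon_n-h'(v_n^{\ast})\eta_n$, which is the first-order approximation of the signed distance $u_n-h(v_n)$. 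Differentiating the functional equation~\eqref{eqn:functionalEquationCriticalCurve} at $v_n^{\ast}$ yields the crucial identity
$$ h'(v_{n+1}^{\ast})\bigl(1+h'(v_n^{\ast})\bigr)=h'(v_n^{\ast})\Psi(v_{n+1}^{\ast})+h(v_n^{\ast})\Psi'(v_{n+1}^{\ast})\bigl(1+h'(v_n^{\ast})\bigr), $$
and substituting it into the direct computation of $\phi_{n+1}$ (with $u_n^{\ast}=h(v_n^{\ast})$) makes the coefficient of $\eta_n$ vanish. One thus obtains the purely scalar recursion
$$ \phi_{n+1}=\bigl[\Psi(v_{n+1}^{\ast})+u_n^{\ast}\Psi'(v_{n+1}^{\ast})-h'(v_{n+1}^{\ast})\bigr]\phi_n+O(\varepsilon^2), $$
whose multiplicative factor is strictly positive on $[-\delta,0]$ (since $\Psi,\Psi'>0$, $u_n^{\ast}>0$ and $h'\le 0$). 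Iterating from $\phi_0=\varepsilon$ yields $\phi_n=c_n\varepsilon+O(\varepsilon^2)$ with explicit positive constants $c_n$.

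\medskip

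To conclude, the Lipschitz continuity of $h'$ on $[-\delta,0]$ gives the Taylor estimate $u_n-h(v_n)=\phi_n+O(\eta_n^2)=c_n\varepsilon+O(\varepsilon^2)$, and evaluating at $n=n^{(3)}_{\delta}=N_{\delta}$ (constant for small $\varepsilon$) produces the claim with $C(\delta):=c_{N_{\delta}}>0$. The main obstacle is the algebraic manipulation producing the scalar recursion for $\phi_n$: it depends critically on the $\mathcal{C}^1$ regularity of $h$ near $0$ provided by Lemma~\ref{l:reg-g-min} and on a careful bookkeeping of the quadratic errors coming from the Taylor expansions of both $\Psi$ and $h$.
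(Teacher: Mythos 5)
Your proof is correct. The boundedness argument matches the paper's, but your treatment of the sharp asymptotics takes a genuinely different (and longer) route than the paper's.

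The paper's proof of the asymptotics is a one-line chain-rule argument: since $(u_0,v_0)\mapsto (u_K,v_K)$ is a $\mathcal{C}^2$ composition and (by Lemma~\ref{l:reg-g-min}) $h$ is $\mathcal{C}^1$ near $0$, the map $\varepsilon\mapsto u_K(\varepsilon)-h(v_K(\varepsilon))$ is $\mathcal{C}^1$ and vanishes at $\varepsilon=0$, so the asymptotics follow from computing its derivative $C=u_K'(0)-h'(v_K(0))v_K'(0)$ and observing that $h'\le 0$ forces $C\ge u_K'(0)>0$ (which is immediate from the recursion). You instead unwind this derivative by hand: you linearize the two-dimensional recursion around the critical trajectory, introduce the quantity $\phi_n=\epsilon_n-h'(v_n^{*})\eta_n$, and exploit the differentiated functional equation \eqref{eqn:functionalEquationCriticalCurve} to show that the coefficient of $\eta_n$ cancels, reducing to a scalar recursion $\phi_{n+1}=a_n\phi_n+O(\varepsilon^2)$ with $a_n=\Psi(v^{*}_{n+1})+u^{*}_n\Psi'(v^{*}_{n+1})-h'(v^{*}_{n+1})>0$. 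I checked the algebra: your ``crucial identity'' is exactly the differentiated form of $h(g(x))=\Psi(g(x))h(x)$, and the substitution does cancel the $\eta_n$-term, giving $C(\delta)=\prod_{k=0}^{N_\delta-1}a_k$. What this buys you is an explicit product formula for $C(\delta)$ and a transparent mechanism (decoupling of the transverse coordinate $\phi_n$ along the critical trajectory); what it costs is length and a careful bookkeeping of quadratic errors that the paper's implicit-function-style argument avoids entirely. Both approaches ultimately hinge on the same ingredients: the $\mathcal{C}^1$ regularity of $h$ from Lemma~\ref{l:reg-g-min}, the smoothness of the iterated map, and the sign $h'\le 0$ to guarantee positivity of the constant.
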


The second part of this lemma allows us to describe the relationship between the parameter $\epsilon=u_{0}-h(v_{0})$ and the corresponding parameter for the sequence $(u_{n^{(3)}_{\delta}+n}, v_{n^{(3)}_{\delta}+n})_{n\ge 0}$.

\begin{proof}
Recall from Lemma \ref{l:reg-g-min} that there exists some $\eta>0$ such that $h(x)=g(x)-x$ is $C^1$ on $[-\eta, 0]$. Let $0<\delta<\min(\eta, |v_0|)$. For each $k\in\N$, we consider $(u_{k}, v_{k})$ as a function of $(u_{0}, v_{0})$ and write $u_{k}=u_{k}(u_{0}, v_{0})$ and $v_{k}=v_{k}(u_{0}, v_{0})$. We observe that $(u_{k}(\cdot,\cdot),v_{k}(\cdot, \cdot))$, as an iteration of $\mathcal{C}^2$ functions, is also $\mathcal{C}^2$. Let $K=K(\delta)$ be the smallest positive integer such that
$$ v_K(h(v_{0}),v_{0})\,\ge \,-\delta. $$
Note that this constant $K$ does not depend on $\varepsilon > 0$. We will now prove that $\lim_{\epsilon \to 0} n_\delta^{(3)} = K$.
By the monotonicity of the $v_{k}$, we have $v_K(h(v_{0})+\varepsilon,v_{0})>-\delta$ for any $\varepsilon>0$. Moreover, by continuity,
$$ v_{K-1}(h(v_{0})+\varepsilon,v_{0})\,<\,-\delta\quad\text{for all sufficiently small }\varepsilon>0. $$
Thus, $n^{(3)}_{\delta}=K$ for all sufficiently small $\varepsilon$.  This completes the first part of   Lemma \ref{lem:easyStep}.

Next, for all sufficiently small $\varepsilon>0$,  we have
$$ u_{n^{(3)}_{\delta}}-h(v_{n^{(3)}_{\delta}})\ =\ u_K^{(\varepsilon)}- h(v_K^{(\varepsilon)}), $$
recalling that $(u^{(\varepsilon)}_{n},v^{(\varepsilon)}_{n})$ is the solution of \eqref{eqn:recursiveEquationReducedB} with $v^{(\varepsilon)}_{0}=v_{0}$ and $u^{(\varepsilon)}_{0}=h(v_{0})+\varepsilon$.

Let $k\ge 1$. Denote by $u_k^{\prime,(0)}$ and $v_k^{\prime,(0)}$ the derivatives of $u_k^{(\varepsilon)}$ and $v_k^{(\varepsilon)}$ with respect to $\varepsilon$ at $\varepsilon=0$.
It is easy to check that $u_k^{\prime,(0)} >0$ and $v_k^{\prime,(0)}>0$.  The function $\varepsilon \mapsto u_k^{(\varepsilon)}- h(v_k^{(\varepsilon)})$ is $C^1$ in a neighborhood of $0$, and vanishes there. Then for any $k\ge 1$,
\begin{equation} \label{def-Ck}   u_k^{(\varepsilon)}- h(v_k^{(\varepsilon)})\, \sim  \mathsf{C}_k  \, \varepsilon,\quad\text{as }\varepsilon\to 0,  \end{equation}
with $\mathsf{C}_k:= u_k^{\prime,(0)}- h'(v_k^{(0)}) v_k^{\prime,(0)}$.   Note that, if $k=0$, then \eqref{def-Ck}  holds trivially with $\mathsf{C}_0=1$. Since $h'(v_k^{(0)})\le 0$, it follows that $\mathsf{C}_k \ge u_k^{\prime,(0)} >0$. Applying \eqref{def-Ck} to $k=K$ completes the proof of Lemma \ref{lem:easyStep} with $C(\delta)=\mathsf{C}_{K(\delta)}$. \end{proof}

Let us introduce some preliminary notation and concepts. Recall the definition of $N_{0}$ from \eqref{def-N0}.  Specifically, we have
$$ N_{0}\ =\ N_{0}(u_{0},v_{0})\ =\ n^{(3)}_{\delta}+N_{0}(u_{n^{(3)}_{\delta}},v_{n^{(3)}_{\delta}}). $$
By considering the recursive system $(u_{ n^{(3)}_{\delta}+n}, v_{n^{(3)}_{\delta}+n})_{n\ge 0}$ and applying Lemma~\ref{lem:easyStep}, we can extend \eqref{c-star} to all $v_{0}<0$, with the constant $c_{\star}(v_0)$ potentially being multiplied by $C(\delta)$, provided we can establish \eqref{c-star} for $v_{0} \in (-\delta,0)$. In particular, using Lemma~\ref{l:reg-g-min}, we choose $\delta>0$ small enough such that $h$ is $\mathcal{C}^{1}$, convex with a Lipschitz continuous derivative $h'$ on $(-\delta,0)$. The precise value of $\delta$ will be determined later (see \eqref{eq:Dn} and \eqref{2ndconddelta}).

From this point on, we assume that $v_{0}\in (-\delta, 0)$, $0 < \varepsilon < \delta$   and $u_{0}-h(v_{0})=\epsilon$. Define
$$ \Delta_{n}\,:=\,u_{n}- h(v_{n})\quad \text{for }0\le n \le N_{0}. $$
Note that $\Delta_{n}>0$ for any $n\ge 0$ with the initial condition $\Delta_{0}=\epsilon$.

Let $(u^{*}_{n}, v^{*}_{n})$ denote the solution of \eqref{eqn:recursiveEquationReducedB} with $u^{*}_{0}:=h(v^{*}_{0})$ and $v^{*}_{0}=v_{0}$, which we refer to as the critical system.   When needed, we write $(u_n^*(v_0), v_n^*(v_0))$ to emphasize the dependence on $v_0$; otherwise, we write $(u_n^*, v_n^*)$. By comparison, we have $0>v_{n}\ge  v^{*}_{n}$ and we recall that $v^*_n \sim -\frac{2}{n}$ as $n \to \infty$. Moreover, as a function of $v_0$, $v^*_n$ is increasing.

\noindent This implies that there exists a positive constant $c$ such that for all $\delta\in (0,1), v_0\in (-\delta, 0)$,
\begin{equation}\label{vN0to0}
|v_{n}|\,\le \,|v^*_n|
\,\le\, \frac{c}{n+1}\quad\text{for all }0\le n \le N_{0}.
\end{equation}

We further note that by monotonicity, $N_{0}$ is an increasing function of $\epsilon$, and since  $\lim_{\epsilon\to 0}v_{n}$ $= v_{n}^{*}<0$ for each fixed $n$, we conclude $\lim_{\epsilon\to 0}N_{0}=\infty$.

In particular, the bound in \eqref{vN0to0} implies that $\lim_{\varepsilon\to0}v_{N_{0}}=0$, and using $v_{N_{0}+1}=u_{N_{0}}+v_{N_{0}}>0$, we see that $0\ge v_{N_{0}}>-u_{N_{0}}$. Since $h(x)= o(x)$ as $x\uparrow0$, we obtain
\begin{equation}\label{eq:uN0Delta}
\Delta_{N_{0}}\ =\ u_{N_{0}}-h(v_{N_{0}})\ \sim\ u_{N_{0}} \quad\text{as }\varepsilon\to 0.
\end{equation}
Therefore, to complete the proof of \eqref{c-star}, it suffices to prove the existence of a positive constant $c_{\star}(v_0)$ such that
\begin{equation}\label{DeltaN0-2}
\lim_{\varepsilon\to0}\frac{\Delta_{N_{0}}}{\varepsilon}\ =\ c_{\star}(v_0).
\end{equation}

The proof of \eqref{DeltaN0-2} involves on studying the variation of the $\Delta_{n}$. For  $n<N_{0}$, by definition, we have
$$ \Delta_{n+1}\ =\ u_{n} \Psi(v_{n+1})-h(v_{n+1})\ =\ (h(v_{n})+\Delta_{n}) \Psi(v_{n+1})-h(v_{n+1}). $$
Recall from Section~\ref{sec:criticalCurve} that we have set $g(x)=x+h(x)$ (for $x\le 0$). Therefore, we can express $v_{n+1}= u_{n}+ v_{n}= g(v_{n})+ \Delta_{n}$. Substituting into the expression for $\Psi(v_{n+1})$, a Taylor expansion provides
$$ \Psi(v_{n+1})\ =\ \Psi(g(v_{n}))+ \Psi'(g(v_{n}))\Delta_{n}+B_{n}\,\Delta_{n}^2, $$
with
$$ B_{n}\ :=\ \int_{0}^{1} (1-s) \Psi''(g(v_{n})+s\Delta_{n})\ \dd s $$
Now fix $b \in (1,4/3)$. By Lemma~\ref{l:reg-g-min}, we can choose $\delta$ small enough such that $h$ is $\mathcal{C}^{1}$, convex and $h'$ is $b$-Lipschitz. Then for any $ -\delta\le x \le y \le 0$ with $\delta \in (0,\eta)$, we have the estimate
\begin{equation}\label{h:quad}
0\ \le\ h(y)-[h(x)+h'(x) (y-x)]\ \le\ \frac{b}{2} (y-x)^2.
\end{equation}
Using this, we obtain the approximation
$$ h(v_{n+1})\ =\ h(g(v_{n}))+h'(g(v_{n}))\Delta_{n}+c_n\,\Delta_{n}^2 $$
for some $c_n\in [0,b/2]$. Recall that for $x\le 0$, we have $h(g(x))=\Psi(g(x))h(x)$. Thus, for any $n< N_{0}$, we arrive at the following expression for $\Delta_{n+1}$,
\begin{equation}\label{eq:Delatn+1}
\Delta_{n+1}\ =\ A_{n} \Delta_{n}+ D_{n}\,\Delta_{n}^2,
\end{equation}
where
\begin{align*}
A_{n}\ &:=\ h(v_{n}) \Psi'(g(v_{n}))+ \Psi(g(v_{n}))-h'(g(v_{n})),\\
D_{n}\ &:=\ h(v_{n}) B_{n}-c_n+\Psi'(g(v_{n}))+B_{n} \Delta_{n} \ =\ u_n B_n - c_n + \Psi'(g(v_{n})).
\end{align*}

We first treat $D_n$ and claim that one can choose (and fix) a sufficiently small $\delta>0$ such that for all $n < N_{0}$,
\begin{equation}\label{eq:Dn}
D_{n} \in [1/3, 2].
\end{equation}

Indeed, let $n< N_0$. For all $s \in [0, 1]$, we have $g(v_n) \le g(v_n) + s \Delta_n \le g(v_n) +\Delta_n= v_{n+1}\le 0$. By monotonicity of $g$, it follows that $g(v_n) \ge g(v_0)\ge g(-\delta)> -\delta$. Hence  $|B_{n}|\le\sup_{x\in[-\delta, 0]}|\Psi''(x)|$. Moreover, $u_n= u_{n-1} \Psi(v_n) \le u_{n-1}\le u_0 = \varepsilon + h(v_0) \le \varepsilon+ h(-\delta) \le \delta+ h(-\delta).$ By continuity of $h$, it follows that $u_n B_n  \to 0$ as $\delta\to 0$,  uniformly in $n < N_0$.    Since $\Psi'(0)=1$, we have  that $\Psi'(g(v_{n})) \to 1$ as $v_0\to 0$, uniformly in $n < N_0$.   Together with  $0\le c_n \le \frac{b}{2}< \frac23$,  this implies \eqref{eq:Dn}.

We further assume that $\delta$ is small enough to satisfy the following conditions:
\begin{equation}\label{2ndconddelta}
h(x)\,\ge\,\frac{x^2}{3}\quad\text{and}\quad b^2h(x)\,\le\,g'(x) x^2,\quad\text{for }x\in [-\delta, 0],
\end{equation}
which is possible since $h(x)\sim x^2/2$ and $g'(x)\to 1$ as $x\uparrow 0$ and $b^2<2$.

We now compare $A_{n}$ with $1$ by using the convexity of $h$. For $x< 0$, differentiating the expression $h(g(x))=\Psi(g(x))h(x)$ gives
$$ h'(g(x)) g'(x)\ =\ \Psi'(g(x))g'(x) h(x)+\Psi(g(x))h'(x). $$
Since $g'(x)=1+h'(x)$, we get
$$ A_{n}=\frac{\Psi(g(v_{n}))}{g'(v_{n})}= \frac{h(g(v_{n}))}{h(v_{n}) g'(v_{n})}. $$
Next, we examine the expression $\frac{h(g(x))}{h(x)}- g'(x)$. Using the definition of $g(x)=x+h(x)$, we can expand as follows:
$$ \frac{h(g(x))}{h(x)}- g'(x)\ =\ \frac{h(x+h(x))-h(x)}{h(x)}-h'(x)\ =\ h'(y_x)- h'(x) $$
for some $y_x\in [x, x+ h(x)]$. Since $h$ is convex, we have $h'(y_x)- h'(x)\ge 0$ and therefore conclude that
$$A_{n}\,\ge\,1. $$
This together with \eqref{eq:Dn} shows that $\Delta_{n}$ is increasing on $[0, N_{0}]$, in particular $\Delta_{n}\ge \Delta_{0}=\varepsilon$ for all $n \le N_{0}$.

We now turn to deriving an upper bound of $A_{n}$. Again, using \eqref{h:quad},
$$ h(g(x))\ =\ h(x+ h(x))\ \le\ h(x)+h'(x) h(x)+\frac{b}{2} h^2(x). $$
Thus, we obtain
$$ A_{n}-1\ =\ \frac{h(g(v_{n}))- h(v_{n}) g'(v_{n})}{h(v_{n}) g'(v_{n})}\ \le\ \frac{bh(v_{n})}{2 g'(v_{n})}\ <\ v_{n}^2, $$
where the last inequality follows from \eqref{2ndconddelta}. Using \eqref{eq:Delatn+1}, we get that for any $0\le k<N_{0}$,
\begin{equation}\label{eq:N0k}
\Delta_{N_{0}}\ =\ \Delta_{k}\,\prod_{n=k}^{N_{0}-1}(1+ (A_{n}-1)+D_{n} \Delta_{n}).
\end{equation}
This equation allows us to complete the proof of \eqref{c-star}.  Specifically, it follows from \eqref{def-Ck} that, for all (fixed) $k\in\Z_+$, there exists a constant $\mathsf{C}_k=\mathsf{C}_k(v_0)>0$ (recalling that $\mathsf{C}_0=1$) such that
$$
  \Delta_k\ \sim\ \mathsf{C}_k\,\varepsilon \quad \text{as }\varepsilon\to 0.
$$
Since $\Delta_n$ is increasing on $[0, N_0]$ and $N_0\to\infty$ as $\varepsilon\to \infty$,   it follows, for all $k \in \Z_+$, that $\mathsf{C}_{k} \le \mathsf{C}_{k+1}$ and
\begin{equation}\label{eqn:liminf}
\liminf_{\epsilon\to 0} \frac{\Delta_{N_{0}}}{\epsilon}\ \ge\ \mathsf{C}_{k}.
\end{equation}
On the other hand, using \eqref{eq:N0k} and noting that $A_{n}-1<v_{n}^2$ and $D_{n} \le 2$, we get the bound
$$ \Delta_{N_{0}}\le  \Delta_{k} \exp\left( \sum_{n=k}^{N_{0}-1} \left(v_{n}^2+2 \Delta_{n}\right) \right).
$$
We estimate the sum as follows: since $\sum_{n=k}^{N_{0}} \Delta_{n}\le \sum_{n=k}^{N_{0}} u_{n} \le v_{N_{0}}-v_{k}$, and $v_{n}^2\le (v_n^*)^2 \le \frac{c^2}{(n+1)^2}$ (from \eqref{vN0to0}),  there exists decreasing null sequence $(R_{n})_{n\ge 1}$ such that for all $k\in\Z_+$,
$$ \Delta_{N_{0}}\ \le\ \Delta_{k} e^{R_{k}+ 2\left(v_{N_{0}}-v_{k}\right)}, $$
with $R_k:= \sum_{n=k}^\infty (v_n^*)^2$.
Taking the limit $\epsilon\to 0$, and using $v_{N_{0}}\to 0$ and $v_{k}\to v_{k}^{*}$ as $\epsilon\to 0$, we obtain
\begin{equation}\label{eqn:limsup}
\limsup_{\epsilon\to 0} \frac{\Delta_{N_{0}}}{\epsilon}\ \le\ \mathsf{C}_{k} e^{R_{k}-2v^{*}_{k}}<\infty.
\end{equation}

By applying \eqref{eqn:liminf} to a generic $k$ and \eqref{eqn:limsup} with $k=0$, we arrive at the conclusion that $\mathsf{C}_{k}\le\mathsf{C}_0 e^{R_0-2v^*_0}=e^{R_0- 2 v_0}$ holds for all $k$. Consequently, the nondecreasing sequence $(\mathsf{C}_{k})_{k\ge 0}$ converges to a finite constant $c_{\star}(v_0)$. Moreover, since $\lim_{k\to \infty} (R_{k}-v^{*}_{k} )=0$, we can apply \eqref{eqn:liminf} and \eqref{eqn:limsup} to conclude that \eqref{DeltaN0-2} holds, thereby completing the proof.
\end{proof}

\begin{remark}\label{rem:c_star}\rm
Recall $\mathsf{C}_0=1$. We have shown that $c_\star(v_0)=\lim_{k\to\infty}\mathsf{C}_k\ \in\ [1,e^{R_0-2v_0}]$ for any $v_0<0$, where $R_0=\sum_{n=0}^{\infty}(v_n^*)^2$. We emphasize that $(v_n^*)_{n\ge 0}$ depends on $v_0$ through the initial condition $v_0^*=v_0$. For every fixed $n$, $v_n^*\to 0$ as $v_0\to 0$. By \eqref{vN0to0}, the dominated convergence theorem implies that $R_0\to 0$ as $v_0\to 0$; hence $\lim_{v_0\uparrow 0} c_\star(v_0)=1$.
\end{remark}

\begin{proof}[Step 2: Proofs of \eqref{n2A-n1Aa} and \eqref{n2A-n1Ab}]
The main idea here to approximate the function $\Psi$, in a neighborhood of $0$ of width $\epsilon^{1/2}$, by the simpler function $x \mapsto 1+x$. Specifically, we substitute this approximation into the recursion \eqref{eqn:recursiveEquationReducedB}, resulting in the simplified system:
\begin{equation}\label{eqn:recursionSimplified}
\begin{pmatrix} a_{n+1}\\ b_{n+1}\end{pmatrix}\ =\ \begin{pmatrix} a_{n} (1+b_{n+1})\\
b_{n}+a_{n}\end{pmatrix}.
\end{equation}
obtained by specifying \eqref{eqn:recursiveEquationReducedB} to the function $\Psi : x \mapsto 1+x$. This  simplified system has been considered in \cite{bz-notes}. 
Let for $\delta>0$,
\begin{equation}\label{def-n4}
n^{(4)}_{\delta}= n^{(4)}_{\delta}(u_{0}, v_{0}):= \inf\{n\in\N : v_{n}>\delta\}.
\end{equation}

\begin{lemma}\label{lem:simplification}
Assume \eqref{ass:psi}, let $\eta \in (0, 1)$ be small and fix $\delta>0$ such that
\begin{equation}\label{taylor-psi}
\frac{\Psi(x)-1}{x} \in [1-\eta,1+\eta]\quad\text{for all }x \in [0,\delta].
\end{equation}
Let $(u_{n},v_{n})_{n\ge 0}$ satisfy \eqref{eqn:recursiveEquationReducedB} with initial conditions $u_{0}>0$ and $v_{0} \in  (-u_{0}, 0]$. Then for all $ 1\le k< n^{(4)}_{\delta}$, we have
\begin{equation}
\frac{a_{k}^{(\eta, -)}}{1-\eta}\ \le\ u_{k}\ \le\ \frac{a_{k}^{(\eta,+)}}{1+\eta} \quad \text{and}\quad \frac{b_{k}^{(\eta, -)}}{1-\eta}\ \le\ v_{k}\ \le\ \frac{b_{k}^{(\eta, +)}}{1+\eta} ,\label{comparison-2sys}
\end{equation}
where $(a_{k}^{(\eta,\pm)}, b_{k}^{(\eta,\pm)})_{k\ge 0}$ are the solutions to the simplified recursion \eqref{eqn:recursionSimplified} with the respective initial conditions $(a_{0}^{(\eta, \pm)}, b_{0}^{(\eta, \pm)}):= ((1\pm \eta)  u_{0}, (1\pm \eta) v_{0})$.
\end{lemma}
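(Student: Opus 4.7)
The plan is a simple induction on $k$ sandwiching $(u_k,v_k)$ between the two simplified trajectories $(a_k^{(\eta,-)},b_k^{(\eta,-)})$ and $(a_k^{(\eta,+)},b_k^{(\eta,+)})$. The only nontrivial insight is that the inequalities stated in the lemma are too loose to propagate by induction; one must instead establish the stronger bounds
$$(1-\eta)\,u_k \,\ge\, a_k^{(\eta,-)}, \quad (1+\eta)\,u_k \,\le\, a_k^{(\eta,+)}, \quad (1-\eta)\,v_k \,\ge\, b_k^{(\eta,-)}, \quad (1+\eta)\,v_k \,\le\, b_k^{(\eta,+)}$$
for every $0 \le k < n^{(4)}_{\delta}$. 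The base case $k=0$ holds with equality by the very definition of the initial conditions $a_0^{(\eta,\pm)}=(1\pm\eta)u_0$, $b_0^{(\eta,\pm)}=(1\pm\eta)v_0$.

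For the inductive step at $k+1 < n^{(4)}_{\delta}$, the $b$-coordinate bound is immediate from the linear update $b_{k+1}^{(\eta,\pm)} = b_k^{(\eta,\pm)} + a_k^{(\eta,\pm)}$ together with $v_{k+1} = v_k + u_k$. For the $a$-coordinate I would chain
$$a_{k+1}^{(\eta,+)} = a_k^{(\eta,+)}\bigl(1 + b_{k+1}^{(\eta,+)}\bigr) \,\ge\, (1+\eta)\,u_k\bigl(1 + (1+\eta)\,v_{k+1}\bigr) \,\ge\, (1+\eta)\,u_k\,\Psi(v_{k+1}) \,=\, (1+\eta)\,u_{k+1},$$
where the first inequality uses the hypothesis at level $k$ together with the positivity $1 + b_{k+1}^{(\eta,+)} > 0$, while the second is the upper Taylor bound in \eqref{taylor-psi}. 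Its use is legitimate because $v_{k+1} \in [0,\delta]$: indeed $v_{k+1} \le \delta$ by the definition of $n^{(4)}_{\delta}$, and $v_{k+1} \ge v_1 = v_0 + u_0 > 0$ since $v_0 > -u_0$. A symmetric chain employing the lower Taylor bound $\Psi(v_{k+1}) \ge 1+(1-\eta)v_{k+1}$ handles the $(\eta,-)$ system and closes the induction.

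To conclude, note that $u_k > 0$ for every $k$ (since $u_0 > 0$ and $\Psi > 0$) and $v_k \ge v_1 > 0$ for every $k \ge 1$, so the scaled quantities $(1\pm\eta)u_k$ and $(1\pm\eta)v_k$ are \emph{strictly} separated from $u_k$ and $v_k$; this upgrades the sandwich into the strict inequalities asserted in the lemma for $k \ge 1$. The main technical point to verify — and the only one that could derail a careless induction — is the positivity of $1 + b_{k+1}^{(\eta,\pm)}$ needed when propagating the $a$-inequalities: this follows because $b_1^{(\eta,\pm)} = (1\pm\eta)v_1 > 0$ and each $a_k^{(\eta,\pm)}$ remains positive, so $b_k^{(\eta,\pm)} > 0$ from step $1$ onward. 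Beyond this bookkeeping no further obstacle arises; the argument is essentially a verification that the Taylor tolerance $\eta$ in \eqref{taylor-psi} is exactly compatible with the multiplicative scaling $1\pm\eta$ of the comparison systems' initial conditions.
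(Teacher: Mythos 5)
Your proof is correct and is essentially the paper's argument: the paper reaches the stronger rescaled comparison $a_k^{(\eta,-)}\le(1-\eta)u_k$ and $(1+\eta)u_k\le a_k^{(\eta,+)}$ (and likewise for $v_k$) by invoking its monotonicity Lemma~\ref{lem:monotonicity} with $\underline{\Psi}(x)=1+x-\eta|x|$, $\bar{\Psi}(x)=1+x+\eta|x|$ and observing that the rescaled simplified system is exactly a sub/supersolution, whereas you unfold that comparison into a self-contained induction. Your version is arguably a touch more careful, since you only ever invoke the Taylor bound \eqref{taylor-psi} on the range $(0,\delta]$ actually visited, rather than needing $\underline{\Psi}\le\Psi\le\bar{\Psi}$ to hold on all of $\R$ as the literal hypotheses of Lemma~\ref{lem:monotonicity} demand.
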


Lemma~\ref{lem:simplification} enables us to compare the system $(u_{n},v_{n})_{n\ge 0}$ with $(a_{n}, b_{n})_{n\ge 0}$ up to a multiplicative factor $1\pm \eta$ on the initial conditions. In particular,  the evolution of the system $(u_{N_{0}+k},v_{N_{0}+k})_{k\ge 0}$ can be controlled by the two systems $(a_{k}^{(\eta,\pm)}, b_{k}^{(\eta,\pm)})_{k\ge 0}$ with initial conditions  $(a_{0}, b_{0})= ((1\pm\eta) u_{N_{0}}, (1\pm\eta) v_{N_{0}})$.

\begin{proof}
The proof follows from a direct application of Lemma~\ref{lem:monotonicity} with the functions
$$ \underline{\Psi}(x)=1+x-\eta |x| \quad\text{and}\quad\bar{\Psi}(x)=1+x+\eta |x|. $$
In particular, if $u_{0}>0$ and $v_{0} \in (-u_{0},0]$, we have (using the notation of that lemma)
\begin{align*}
\begin{pmatrix} \underline{u}_{n+1}\\ \underline{v}_{n+1}\end{pmatrix}\ =\ \begin{pmatrix} \underline{u}_{n} (1+(1-\eta) \underline{v}_{n+1})\\
\underline{v}_{n}+\underline{u}_{n}\end{pmatrix}
\quad \text{and}\quad
\begin{pmatrix} \bar{u}_{n+1}\\ \bar{v}_{n+1}\end{pmatrix}\ =\ \begin{pmatrix} \bar{u}_{n} (1+(1+\eta) \bar{v}_{n+1})\\
\bar{v}_{n}+\bar{u}_{n}\end{pmatrix}.
\end{align*}
We then observe that the sequence  $((1-\eta)^{-1} a_{n},(1-\eta)^{-1} b_{n})_{n\ge 0}$  satisfies the same recursion as $(\underline{u}_{n}, \underline{v}_{n})_{n\ge 0}$, and similarly $((1+\eta)^{-1} a_{n},(1+\eta)^{-1} b_{n})_{n\ge 0}$ follows the same recursion as $(\bar{u}_{n}, \bar{v}_{n})_{n\ge 0}$. This completes the argument for the direct application of Lemma~\ref{lem:monotonicity}.
\end{proof}

By \eqref{c-star}, we have $u_{N_{0}} \sim c_{\star}(v_0) \varepsilon$. Therefore the study of $n^{(2)}_{A}- N_{0}$ reduces to that of the corresponding quantity for the system $(a_{n}, b_{n})_{n\ge 0}$. For $N_{0}-n^{(1)}_{A}$ in the case $v_0<0$, we  consider the dual system $(u_{N_{0}-n}, v_{N_{0}+1-n})_{0\le n\le N_{0}}$ as in Proposition~\ref{prop:duality}, and note that the function $\check{\Psi}(x):=1/\Psi(-x)$  also satisfies \eqref{taylor-psi}. Thus, we can again apply Lemma~\ref{lem:simplification} to the dual system, reducing the study of $N_{0}- n^{(1)}_{A}$ to that of $(a_{n}, b_{n})_{n\ge 0}$. This explains why, in the case $v_0<0$, both $n^{(2)}_{A}- N_{0}$ and $N_{0}- n^{(1)}_{A}$ are of the same order in \eqref{n2A-n1Aa} and \eqref{n2A-n1Ab}.

From the previous discussion, the proofs of \eqref{n2A-n1Aa} and \eqref{n2A-n1Ab} reduce  to showing that for $(a_{n}, b_{n})_{n\ge 0}$ defined by \eqref{eqn:recursionSimplified} with initial condition $a_{0}=\varepsilon$ and $b_{0}\in (-\varepsilon, 0]$, we have, uniformly in $b_{0}\in (-\varepsilon, 0]$,
\begin{equation}\label{mA}
\lim_{A\to\infty} \,  \limsup_{\varepsilon\to 0} \big|\varepsilon^{1/2} m_{A}^{(\varepsilon)}\ - \ \frac{\pi}{\sqrt{2}}\big|=0,
\end{equation}
where
$$ m_{A}^{(\varepsilon)}\ =\ m_{A}^{(\varepsilon)}(a_{0}, b_{0})\ :=\ \inf\{n\ge 0: b_{n}>A \sqrt{\varepsilon}\}. $$
Note that, 
for all $a_0=\varepsilon, b_0\in (-\varepsilon, 0]$, $b_1>0$ and $a_1>\varepsilon$, hence
$$ m_{A}^{(\varepsilon)}(\varepsilon, 0)\ \le\ m_{A}^{(\varepsilon)}(a_{0}, b_{0})\ \le\ 1+ m_{A}^{(\varepsilon)}(\varepsilon,0) $$
by monotonicity. Therefore, it suffices to prove \eqref{mA} with $a_{0}=\varepsilon$ and $b_{0}=0$, which we assume from now on.

Next, define $x^{(\varepsilon)}_{n}=a_{n}/\epsilon$ and $y^{(\varepsilon)}_{n}=b_{n}/\epsilon^{1/2}$. We observe that
\begin{equation}\label{eq xvarepsilon}
\begin{pmatrix} x^{(\varepsilon)}_{n+1}-x^{(\varepsilon)}_{n}\\ y^{(\varepsilon)}_{n+1}-y^{(\varepsilon)}_{n}\end{pmatrix}\ =\ \begin{pmatrix} \epsilon^{1/2} x^{(\varepsilon)}_{n} y^{(\varepsilon)}_{n+1}\\ \epsilon^{1/2} x^{(\varepsilon)}_{n}\end{pmatrix}.
\end{equation}
with initial conditions $x^{(\varepsilon)}_{0}=1$ and $y^{(\varepsilon)}_{0} =0$. This is an Euler scheme for the differential system
\begin{equation}\label{Euler scheme}
\begin{pmatrix} x'\\ y'\end{pmatrix}\,=\,\begin{pmatrix} x y\\ x\end{pmatrix}.
\end{equation}
The solution to this system is given by
\begin{equation}\label{ODE solution}
x(t)\,=\,1+\tan(t/\sqrt{2})^2,\quad y(t)\,=\,\sqrt{2}\tan(t/\sqrt{2})\quad\text{for all }t<T\,:=\, \frac{\pi}{\sqrt{2}}.
\end{equation}
We then use that this Euler scheme converges uniformly to the solution of the ODE system. More precisely, this is a one-step method (see \cite[Chapter VIII]{Demailly}) with
\[
  \begin{pmatrix} x^{(\varepsilon)}_{n+1}\\ y^{(\varepsilon)}_{n+1}\end{pmatrix} = \begin{pmatrix} x^{(\varepsilon)}_{n}\\ y^{(\varepsilon)}_{n}\end{pmatrix} + \epsilon^{1/2} \Phi(x^{(\varepsilon)}_n,y^{(\varepsilon)}_n, \epsilon^{1/2}),
\]
where $\Phi(x,y,\epsilon^{1/2}) = \begin{pmatrix} x(y+\epsilon^{1/2}x)\\x\end{pmatrix}$ is continuous, locally Lipschitz in $(x,y)$, therefore forms a convergent one-step approximation scheme (see \cite{Demailly}, Remark on page 230).
\begin{fact}\label{fct:atExplosion}
For all $\delta>0$, we have
\begin{equation}\label{euler-1}
\limsup_{\varepsilon\to 0} \sup_{k\le  (T-\delta) \varepsilon^{-1/2}} \left( \left| x^{(\varepsilon)}_{k}-x_{k\varepsilon^{1/2}} \right|+\left| y^{(\varepsilon)}_{k}-y_{k\varepsilon^{1/2}} \right|\right)\ =\ 0.
\end{equation}
\end{fact}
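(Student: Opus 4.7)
The plan is to recognize the pair $(x^{(\varepsilon)}_n, y^{(\varepsilon)}_n)_{n\ge 0}$ as a first-order one-step scheme with step size $h := \sqrt{\varepsilon}$ for the ODE system \eqref{Euler scheme}, of which $(x(t), y(t))$ is the exact solution, and to invoke the standard consistency-plus-stability theorem for Euler-type schemes on the compact interval $[0, T-\delta]$, where the exact flow is smooth and bounded.

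First I would rewrite the semi-implicit update \eqref{eq xvarepsilon} in fully explicit form by substituting the update for $y^{(\varepsilon)}_{n+1}$ into that for $x^{(\varepsilon)}_{n+1}$, yielding
$$
x^{(\varepsilon)}_{n+1} \,=\, x^{(\varepsilon)}_n + \sqrt{\varepsilon}\, x^{(\varepsilon)}_n y^{(\varepsilon)}_n + \varepsilon\,(x^{(\varepsilon)}_n)^2, \qquad y^{(\varepsilon)}_{n+1} \,=\, y^{(\varepsilon)}_n + \sqrt{\varepsilon}\, x^{(\varepsilon)}_n.
$$
Up to an $O(h^2)$ per-step correction, this is the explicit Euler scheme for a polynomial vector field $F(x,y)$ that is locally Lipschitz on every bounded set. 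Setting $t_n := n\sqrt{\varepsilon}$ and $M := 1+\sup_{t\in[0, T-\delta]}(|x(t)|+|y(t)|)$, which is finite by \eqref{ODE solution}, a second-order Taylor expansion of the exact flow over a step of size $\sqrt{\varepsilon}$ yields $\|(x(t_{n+1}), y(t_{n+1})) - (x(t_n), y(t_n)) - \sqrt{\varepsilon}\, F(x(t_n), y(t_n))\| = O(\varepsilon)$, with an implicit constant depending only on the $\mathcal{C}^2$-norm of $(x,y)$ on $[0, T-\delta]$.

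Next I would bound the error $e_n := (x^{(\varepsilon)}_n - x(t_n),\, y^{(\varepsilon)}_n - y(t_n))$. Subtracting the discrete update from the Taylor expansion and using the Lipschitz constant $L$ of $F$ on the closed ball of radius $2M$, one obtains, as long as $|x^{(\varepsilon)}_n| + |y^{(\varepsilon)}_n| \le 2M$, the one-step inequality
$$
|e_{n+1}|\,\le\,(1 + L\sqrt{\varepsilon})\,|e_n| + C\varepsilon,
$$
for a constant $C$ absorbing the Taylor remainder and the $O(h^2)$ scheme correction. Iterating this estimate over $N_\varepsilon := \lfloor (T-\delta)/\sqrt{\varepsilon}\rfloor$ steps (a discrete Gronwall argument) yields $|e_n| \le (C/L)\sqrt{\varepsilon}\, e^{L(T-\delta)}$ on the event $\{n < \tau_\varepsilon\}$, where $\tau_\varepsilon$ denotes the first exit of the discrete trajectory from the ball of radius $2M$.

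The step I expect to need the most care is closing this estimate by a bootstrap: the bound above tends to $0$ with $\varepsilon$, so for sufficiently small $\varepsilon$ the error satisfies $|e_n| < 1$ throughout $\{n \le N_\varepsilon \wedge \tau_\varepsilon\}$, which forces $|x^{(\varepsilon)}_n| + |y^{(\varepsilon)}_n| < M+1 < 2M$ and hence $\tau_\varepsilon > N_\varepsilon$. The uniform error bound then extends to all $n \le N_\varepsilon$, giving in fact the stronger conclusion that the $\limsup$ in \eqref{euler-1} is equal to $0$. This bootstrap is essential because the exact solution blows up at $T$: without restricting to $[0, T-\delta]$, the Lipschitz constant $L$ and the stability constant in Gronwall would be unbounded, so the a priori localization of the discrete trajectory in a region of controlled Lipschitz constant is the real content of the argument.
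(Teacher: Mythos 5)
Your proof is correct; it reconstructs the classical Euler-scheme convergence result that the paper simply cites from Demailly (Chapter V.2.3) without proof, and the three ingredients you use—local consistency of the semi-implicit step, discrete Gronwall over $O(\varepsilon^{-1/2})$ steps on $[0,T-\delta]$, and the bootstrap to keep the discrete trajectory inside a ball of controlled Lipschitz constant away from the blowup time $T$—are exactly the standard ones. Your argument even delivers the slightly stronger conclusion that the $\limsup$ in \eqref{euler-1} is zero, not just finite, which is more than the statement requires.
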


Since $m_{A}(\varepsilon)= \inf\{n\ge 0: y^{(\varepsilon)}_{n}>A\}$, it follows from \eqref{euler-1} that
$$ \liminf_{A\to\infty} \liminf_{\varepsilon\to 0} \varepsilon^{1/2} m_{A}(\varepsilon) \ge  \frac{\pi}{\sqrt{2}} -\delta\quad\text{for any }\delta>0. $$
Letting $\delta\to 0$ gives the lower bound. For the upper bound, note that for all $t<\pi/\sqrt{2}$,
\begin{equation}\label{eq:euler-bn}
\lim_{\varepsilon\to 0}\frac{b_{\lfloor t \varepsilon^{-1/2}\rfloor}}{\varepsilon^{1/2}}\ =\   \lim_{\varepsilon\to 0} y^{(\varepsilon)}_{\lfloor t \varepsilon^{-1/2}\rfloor}\ =\ \frac{\tan(t\sqrt{2})}{\sqrt{2}}.
\end{equation}
For future reference, we also have the following limits for any $A>0$:
\begin{equation}\label{eq:euler-bn2}
\lim_{\varepsilon\to0} b_{m_{A}(\varepsilon)} / \varepsilon^{1/2}=A, \qquad \mbox{and} \quad \lim_{\varepsilon\to0} a_{m_{A}(\varepsilon)} / \varepsilon =1+A^2/2.
\end{equation}

By monotonicity, we deduce from \eqref{eq:euler-bn} that
$$  \lim_{\epsilon\to 0} b_{\lfloor  T \varepsilon^{-1/2}\rfloor}/\varepsilon^{1/2}=\infty. $$
This implies that
$$ \limsup_{A\to\infty} \limsup_{\varepsilon\to 0} \varepsilon^{1/2} m_{A}(\varepsilon) \le  \frac{\pi}{\sqrt{2}} .
$$
We have then proved \eqref{mA} and completed the proofs of \eqref{n2A-n1Aa} and \eqref{n2A-n1Ab}.
\end{proof}

\begin{proof}[Step 3: Proof of \eqref{n1An2A}] Let $v_0<0$.
We begin by considering $n^{(1)}_{A}$, comparing the supercritical system $(u_{n}, v_{n})_{n\ge 0}$ with the critical system. Let $(u^{*}_{n}, v^{*}_{n})_{n\ge 0}$ satisfy the recursive equation \eqref{eqn:recursiveEquationReducedB} with  $v^{*}_{0}:=v_{0}<0$ and $u^{*}_{0}:=h(v_{0})$. Since $u_{0}=h(v_{0})+\varepsilon>u^{*}_{0}$, Lemma~\ref{lem:monotonicity} implies that $v_{n} \ge  v^{*}_{n}$ for all $n$. Therefore, we have
$$ n^{(1)}_{A}\ \le\ \inf \{n\ge 1: v^{*}_{n}>- A \varepsilon^{1/2}\}. $$
By Theorem~\ref{thm:criticalEvolution}, we know that $v^{*}_{n} \sim -2/n$ as $n\to\infty$. Hence
$$ \limsup_{\varepsilon\to0}\varepsilon^{1/2} n^{(1)}_{A}\ \le\ \frac{2}{A}. $$

Next, we consider $n_{A}^{(2)}$, using \eqref{eq:euler-bn2} to obtain
$$ v_{n^{(2)}_{A}}\ \sim\ A \varepsilon^{1/2}\quad\text{as } \varepsilon\to 0. $$
An application of Lemma~\ref{lem:dual} provides the bound
$$ \limsup_{\varepsilon\to0}\varepsilon^{1/2} (n_{*}-n^{(2)}_{A})\ \le\  \frac{c}{A}. $$
Therefore, we have shown \eqref{n1An2A} in the case $v_0<0$. If $v_0=0$, then $n^{(1)}_A=0$ and \eqref{n1An2A} follow exactly in the same way using \eqref{eq:euler-bn2} and Lemma~\ref{lem:dual}.
\end{proof}

\begin{proof}[Proof of Theorem~\ref{thm:main}]
When $v_0<0$, we deduce from \eqref{n2A-n1Aa}, \eqref{n2A-n1Ab} and \eqref{n1An2A} that
$$ \lim_{\varepsilon\to0} \varepsilon^{1/2} n_{*}(h(v_{0})+\varepsilon, v_{0})=c_{\star}(v_0)^{-1/2} \pi \sqrt{2}. $$
The result then follows from Lemma~\ref{lem:free} with ${\mathbb C}_{v_0}:= c_{\star}(v_0)^{-1/2} \pi \sqrt{2}\,  \log \Psi(\infty)$. In the case $v_0=0$, using \eqref{n2A-n1Aa} and \eqref{n1An2A}, the corresponding result holds with ${\mathbb C}_0:= \frac{\pi}{\sqrt{2}} \log \Psi(\infty)$. By Remark \ref{rem:c_star}, $\lim_{v_0\to0} {\mathbb C}_{v_0}= 2 {\mathbb C}_0$ as claimed. This completes the proof of Theorem~\ref{thm:main}.
\end{proof}

{\noindent\bf Acknowledgments.} We are grateful to Zenghu Li and Zhan Shi for providing references and for stimulating discussions, especially for sharing the unpublished notes \cite{bz-notes}.   This project started 2019 during the conference  \textit{Branching in Innsbruck}; we warmly thank the organizers for their hospitality. We are also grateful to the anonymous referees for their careful reading and helpful comments, which improved the readability of the paper.

\medskip
{\noindent\bf Funding.}
The first author was partially supported by the German Research Foundation (DFG) under Germany's Excellence Strategy EXC 2044-390685587, Mathematics M\"unster: Dynamics--Geometry--Structure.

The second author was partially supported by ANR LOCAL (ANR-22-CE40-0012).

The third author was partially supported by the MITI interdisciplinary program 80PRIME GEx-MBB and the ANR MBAP-P (ANR-24-CE40-1833) project.

\end{document}